\newtheorem{deff}{Definition}[section]
\newtheorem{lemma}[deff]{Lemma}
\newtheorem{theorem}[deff]{Theorem}
\newtheorem{corollary}[deff]{Corollary}
\newtheorem{proposition}[deff]{Proposition}
\newtheorem{fact}[deff]{Fact}
\newtheorem{em-example}[deff]{Example}
\newtheorem{em-def}[deff]{Definition}        
\newtheorem{em-remark}[deff]{Remark}         
\newtheorem{em-question}[deff]{Question}
\newtheorem{claim}[deff]{Claim}
\newenvironment{example}{\begin{em-example} \em }{ \end{em-example}}
\newenvironment{remark}{\begin{em-remark} \em }{\end{em-remark}}
\newcommand{\R}{\mathbb R}
\newcommand{\N}{\mathbb N}
\newcommand{\G}{G_\delta}
\newcommand{\Q}{\mathbb Q}
\newcommand{\Z}{\mathbb Z}
\newcommand{\Prm}{\mathbb P}
\newcommand{\T}{\mathbb{T}}
\newcommand{\HG}{\text{Hom}(G,\mathbb T)}
\newcommand{\HGemph}{\text{\emph{Hom}}(G,\mathbb T)}
\title{Extremal $\alpha$-pseudocompact abelian groups}
\author{Anna Giordano Bruno}
\date{}
\begin{document}

\maketitle

\begin{abstract}
Let $\alpha$ be an infinite cardinal. Generalizing a recent result of Comfort and van Mill, we prove that every $\alpha$-pseudocompact abelian group of weight $> \alpha$ has some proper dense $\alpha$-pseudocompact subgroup and admits some strictly finer $\alpha$-pseudocompact group topology.
\end{abstract}

\noindent \emph{AMS classification numbers:}  Primary 22B05, 22C05, 40A05; Secondary 43A70, 54A20. \\
\emph{Key words and phrases:} $\alpha$-pseudocompact group, $G_\alpha$-dense subgroup, extremal pseudocompact group, $\alpha$-singular group.

\section{Introduction}

In this paper all topological spaces and groups are supposed to be Tychonov.

The following notion was introduced by Kennison.

\begin{deff}\emph{\cite{K}}
Let $\alpha$ be an infinite cardinal. A topological space $X$ is \emph{$\alpha$-pseudocompact} if $f(X)$ is compact in $\R^\alpha$ for every continuous function $f:X\to \R^\alpha$.
\end{deff}

Note that $\omega$-pseudocompactness coincides with pseudocompactness \cite[Theorem 2.1]{K}; so this definition generalizes that of pseudocompact space given by Hewitt \cite{H}. As a direct consequence of the definition, a continuous image of an $\alpha$-pseudocompact space is $\alpha$-pseudocompact. Moreover, an $\alpha$-pseudocompact space of weight $\leq\alpha$ is compact.

If $\alpha\geq \beta$ are infinite cardinals, $\alpha$-pseudocompact implies $\beta$-pseu\-do\-com\-pact and in particular pseudocompact. In \cite[Theorem 1.1]{CR} it is proved that every pseudocompact group $G$ is \emph{precompact} (i.e., the completion $\widetilde G$ is compact \cite{W}).

\bigskip
A pseudocompact group $G$ is \emph{$s$-extremal} if it has no proper dense pseudocompact subgroup and it is \emph{$r$-extremal} if there exists no strictly finer pseudocompact group topology on $G$ \cite{C}.
Recently Comfort and van Mill proved that a pseudocompact abelian group is either $s$- or $r$-extremal if and only if it is metrizable \cite[Theorem 1.1]{CvM2}. The question of whether every either $s$- or $r$-extremal pseudocompact group is metrizable was posed in 1982 \cite{CRob2,CSou} and many papers in the following twenty-five years were devoted to the study of this problem \cite{C,C1,CGal1,CGal2,CGvM,CRob2,CRob,CSou,CvM,CvM2,DGM,Gal2}.

In the survey \cite{C} exposing the story of the solution of this problem there is a wish of extending \cite[Theorem 1.1]{CvM2} to the more general case of non-abelian groups. Moreover it is suggested to consider, for any pair of topological classes $\mathcal P$ and $\mathcal Q$, the problem of understanding whether every topological group $G\in \mathcal P$ admits a dense subgroup and/or a strictly larger group topology in $\mathcal Q$. This problem is completely solved by \cite[Theorem 1.1]{CvM2} in the case $\mathcal P = \mathcal Q = \{\text{pseudocompact abelian groups}\}$. Here we consider and solve the case $\mathcal P=\mathcal Q= \{\text{$\alpha$-pseudocompact abelian groups}\}$.

\medskip
We generalize to $\alpha$-pseudocompact groups the definitions of the different levels of extremality given for pseudocompact groups in \cite{C,CRob,DGM}. For $\alpha=\omega$ we find exactly the definitions of $s$-, $r$-, $d$-, $c$- and weak-extremality.

\begin{deff}
Let $\alpha$ be an infinite cardinal. An $\alpha$-pseudocompact group $G$ is:
\begin{itemize}
	\item \emph{$s_\alpha$-extremal} if it has no proper dense $\alpha$-pseudocompact subgroup;
	\item \emph{$r_\alpha$-extremal} if there exists no strictly finer $\alpha$-pseudocompact group topology on $G$;
	\item \emph{$d_\alpha$-extremal} if $G/D$ is divisible for every dense $\alpha$-pseudocompact subgroup $D$ of $G$;
	\item \emph{$c_\alpha$-extremal} if $r_0(G/D)< 2^\alpha$ for every dense $\alpha$-pseudocompact subgroup $D$ of $G$;
	\item \emph{$\alpha$-extremal} if it is both $d_\alpha$- and $c_\alpha$-extremal.
\end{itemize}
\end{deff}

Moreover we extend for every $\alpha$ the concept of singular group introduced in \cite[Definition 1.2]{DGM} and used later in \cite{DG,DG1}:

\begin{deff}
Let $\alpha$ be an infinite cardinal. A topological abelian group $G$ is \emph{$\alpha$-singular} if there exists a positive integer $m$ such that $w(m G)\leq\alpha$.
\end{deff}

If $\beta\leq\alpha$ are infinite cardinals, then $s_\beta$- (respectively, $r_\beta$-, $d_\beta$-, $c_\beta$-, $\beta$-) extremality yields $s_\alpha$- (respectively, $r_\alpha$-, $d_\alpha$-, $c_\alpha$-, $\alpha$-) extremality, for $\alpha$-pseudocompact groups. Immediate examples of $d_\alpha$- and $c_\alpha$-extremal $\alpha$-pseudocompact gro\-ups are divisible and torsion $\alpha$-pseudocompact groups respectively. Intuitively, $\alpha$-singular groups are those having ``large'' torsion subgroups (see Lemma \ref{alpha-singular}).

In the following diagram we give an idea of the relations among these levels of extremality for $\alpha$-pseudocompact abelian groups. The non-obvious implications in the diagram are proved in Proposition \ref{ccomp}, Theorem \ref{debestr} and Lemma \ref{singular->c-extremal}.

\begin{equation*}\label{extremalities}
\xymatrix{
& w(G)\leq\alpha \ar@{=>}[dr]\ar@{=>}[dl] \ar@{=>}@/^3pc/[ddrr] & & \\
\text{$s_\alpha$-extremal} \ar@{=>}[dr]& & \text{$r_\alpha$-extremal} \ar@{=>}[dl] & \\
& \text{$\alpha$-extremal} \ar@{=>}[dr]\ar@{=>}[dl] & & \text{$\alpha$-singular} \ar@{=>}[dl] \\ 
\text{$d_\alpha$-extremal} & & \text{$c_\alpha$-extremal} &
}
\end{equation*}

The obvious symmetry of this diagram is ``violated'' by the property $\alpha$-singular; but we show that it is equivalent to $c_\alpha$-extremal (see Corollary \ref{alpha-extremal_mega-theorem}).

The main theorem of this paper shows that four of the remaining properties in the diagram coincide:

\begin{theorem}\label{alpha-extremal-solution}
Let $\alpha$ be an infinite cardinal. For an $\alpha$-pseudocompact abelian group $G$ the following conditions are equivalent: 
\begin{itemize}
	\item[(a)]$G$ is $\alpha$-extremal;
	\item[(b)]$G$ is either $s_\alpha$- or $r_\alpha$-extremal;
	\item[(c)]$w(G)\leq\alpha$.
\end{itemize}
\end{theorem}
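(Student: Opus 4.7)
The implications $(c) \Rightarrow (b)$ and $(b) \Rightarrow (a)$ can both be read off from the extremality diagram in the introduction: $w(G)\leq\alpha$ forces both $s_\alpha$- and $r_\alpha$-extremality, and each of those in turn forces $\alpha$-extremality. So the only real content is $(a) \Rightarrow (c)$, which I would prove by contraposition: given an $\alpha$-pseudocompact abelian group $G$ with $w(G) > \alpha$, exhibit a proper dense $\alpha$-pseudocompact subgroup $D \leq G$ such that either $G/D$ is not divisible (killing $d_\alpha$-extremality) or $r_0(G/D) \geq 2^\alpha$ (killing $c_\alpha$-extremality).

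The natural setting is the Weil completion $K = \widetilde{G}$, which is compact abelian with $w(K) = w(G) > \alpha$; by the $\alpha$-analogue of the Comfort--Ross characterization of precompact pseudocompact groups, $\alpha$-pseudocompactness of $G$ is equivalent to $G$ being $G_\alpha$-dense in $K$. My plan is to choose a continuous character $\chi \colon K \to \mathbb{T}$ (or a small family of such characters) and set $D := G \cap \ker\chi$. Three conditions must be arranged simultaneously: (i) $\chi|_G \not\equiv 0$, so that $D$ is proper in $G$; (ii) $D$ is $G_\alpha$-dense in its own completion, which, up to factoring through $\chi$, reduces to $G_\alpha$-density of $D$ inside the compact group $\ker\chi$; and (iii) the quotient $G/D \cong \chi(G) \subseteq \mathbb{T}$ is arranged to sit inside a finite cyclic subgroup (breaking divisibility) or, after iterating the construction, to produce torsion-free rank $\geq 2^\alpha$.

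The dichotomy here is naturally driven by $\alpha$-singularity. If $G$ is not $\alpha$-singular, then $w(mG) > \alpha$ for every positive integer $m$, which I would leverage to extract sufficiently many torsion-free continuous characters from $\widehat K$ to produce a joint quotient of torsion-free rank $\geq 2^\alpha$, defeating $c_\alpha$-extremality. If instead $G$ is $\alpha$-singular, Lemma \ref{alpha-singular} concentrates the ``heavy'' part of $G$ in its torsion subgroup, and I would pick a character of finite order on $K$ whose restriction to $G$ is nontrivial, so that $G/D$ embeds in a finite cyclic subgroup of $\mathbb{T}$ and is non-divisible, defeating $d_\alpha$-extremality.

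The main obstacle is condition (ii): keeping $D$ genuinely $G_\alpha$-dense in $\ker\chi$ while simultaneously controlling the shape of the quotient. This is where the hypothesis $w(G) > \alpha$ is really consumed — one has to argue that there is enough slack in $\widehat K$ to shave off a $\mathbb{T}$-direction without collapsing any nonempty $G_\alpha$-set of $\ker\chi$ that $G$ is obliged to meet. This step is the $\alpha$-analogue of the central construction of Comfort--van Mill \cite[Theorem 1.1]{CvM2}, with $G_\delta$ replaced throughout by $G_\alpha$ and the cardinal bounds upgraded from $2^\omega$ to $2^\alpha$; I expect the non-obvious implications in the extremality diagram (Proposition \ref{ccomp}, Theorem \ref{debestr}, Lemma \ref{singular->c-extremal}) to carry most of the weight behind this step.
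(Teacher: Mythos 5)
Your outer skeleton is right: (c)$\Rightarrow$(b) and (b)$\Rightarrow$(a) are exactly Proposition \ref{ccomp} and Theorem \ref{debestr}, and the paper also proves (a)$\Rightarrow$(c) through the $\alpha$-singularity dichotomy (it shows $c_\alpha$-extremal $\Leftrightarrow$ $\alpha$-singular in Corollary \ref{alpha-extremal_mega-theorem}, then handles the $\alpha$-singular case by passing to the bounded-torsion quotient $G/mG$ and invoking the torsion case, Theorem \ref{torsion}, plus Proposition \ref{quozd}). But your central construction is broken. If $\chi\colon K\to\T$ is a \emph{continuous} character with $\chi|_G\not\equiv 0$, then $D=G\cap\ker\chi=\ker(\chi|_G)$ is a proper \emph{closed} subgroup of $G$, so it can never be dense: your conditions (i) and (ii) are mutually exclusive. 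This is not a technical obstacle to be finessed; it is why the paper never uses kernels of continuous characters to produce dense subgroups. The proper dense $\alpha$-pseudocompact subgroups in the paper come from two entirely different sources: kernels of \emph{discontinuous} homomorphisms, via the dense graph theorem (Lemma \ref{str0'}, Theorem \ref{dense-graph-alpha}), and Baire-category constructions in the $P_\alpha$-topology (Lemma \ref{cconn0}) combined with the Comfort--van Mill combinatorial Lemma \ref{CvM-Lemma} and the divisible hull $\Q^{(S)}$ of $G/t(G)$, as in the proof of Corollary \ref{alpha-extremal_mega-theorem}. A secondary quantitative problem: a single character gives $r_0(G/D)\leq r_0(\T)=2^\omega$, so for $\alpha>\omega$ no single-character (or countable-family) quotient can ever reach free rank $2^\alpha$; the paper instead gets rank $2^\alpha$ from the $\alpha$-$\Sigma$-product argument inside $S^I$ (Theorem \ref{MegaThm}) and from the partition argument in Corollary \ref{alpha-extremal_mega-theorem}.

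Your closing sentence also misallocates the difficulty: Proposition \ref{ccomp}, Theorem \ref{debestr} and Lemma \ref{singular->c-extremal} are the \emph{easy} arrows of the diagram and carry none of the weight of (a)$\Rightarrow$(c). What actually consumes the hypothesis $w(G)>\alpha$ is the chain: Theorem \ref{G>c2} ($c_\alpha$-extremal $\Rightarrow r_0(G)\leq 2^\alpha$, via Baire category of $P_\alpha G$), Theorem \ref{MegaThm} (the compact characterization, which imports \cite[Theorem 1.6]{DG1}), Proposition \ref{non-sing->big_free-rank} (non-$\alpha$-singular $\Rightarrow r_0(N)\geq 2^\alpha$ for all $N\in\Lambda_\alpha(G)$), Lemma \ref{str_c_extr}, and, in the singular branch, Theorem \ref{torsion}, whose proof needs the $P_\alpha$-topology machinery (Lemmas \ref{pgr3}, \ref{pgr4}, \ref{baire4}, \ref{baire5}, \ref{psi->w}) to manufacture a proper dense $\alpha$-pseudocompact subgroup of the exponent-$p$ quotient $G/\overline{pG}$ --- again not the kernel of any continuous character. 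None of this machinery is sketched or replaced in your proposal, so as it stands the implication (a)$\Rightarrow$(c) is not proved.
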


Our proof of Theorem \ref{alpha-extremal-solution} does not depend on the particular case $\alpha=\omega$, proved in \cite[Theorem 1.1]{CvM2}. However many ideas used here are taken from previous proofs in \cite{CGal2,CGvM,CRob1,CRob,CvM,CvM2,DGM}. Moreover we apply a set-theoretical lemma from \cite{CvM2} (see Lemma \ref{CvM-Lemma}). In each of these cases we give references.

\smallskip
Example \ref{Example-cd} shows that in general $d_\alpha$- and $c_\alpha$-extremality do not coincide with the other levels of extremality.

\medskip
To prove Theorem \ref{alpha-extremal-solution} we generalize a lot of results which hold for pseudocompact abelian groups to $\alpha$-pseudocompact abelian groups. We first establish in \S \ref{alpha-psc} basic properties of $\alpha$-pseudocompact groups.
Then we show that $c_\alpha$-extremal $\alpha$-pseudocompact abelian groups have ``small'' free rank (see Theorem \ref{G>c2}). 
Moreover Theorem \ref{torsion} proves Theorem \ref{alpha-extremal-solution} in the torsion case.

Then we prove that for compact abelian groups $\alpha$-singularity is equivalent to $c_\alpha$-extremality and to a third property of a completely different nature:

\begin{theorem}\label{MegaThm}
Let $\alpha$ be an infinite cardinal. For a compact abelian group $K$ the following conditions are equivalent:
\begin{itemize}
  \item[(a)] $K$ is $c_\alpha$-extremal;
	\item[(b)] $K$ is $\alpha$-singular;
  \item[(c)] there exists no continuous surjective homomorphism of $K$ onto $S^I$, where $S$ is a metrizable compact non-torsion abelian group and $|I|>\alpha$.
\end{itemize}
\end{theorem}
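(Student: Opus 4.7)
The plan is to establish the three nontrivial implications $(b)\Rightarrow(c)$, $(c)\Rightarrow(b)$, and $(a)\Rightarrow(b)$; together with the already-proved $(b)\Rightarrow(a)$ from Lemma \ref{singular->c-extremal}, these close the equivalence. The substance of the argument is the reduction: if $K$ fails to be $\alpha$-singular, i.e.\ $w(mK)>\alpha$ for every $m\geq 1$, then one must construct both a continuous surjection of $K$ onto some $S^I$ with $S$ metrizable compact non-torsion and $|I|>\alpha$ (violating $(c)$), and a dense $\alpha$-pseudocompact subgroup $D\leq K$ with $r_0(K/D)\geq 2^\alpha$ (violating $(a)$).

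For $(b)\Rightarrow(c)$ I would argue directly. If $w(mK)\leq\alpha$ and $\pi\colon K\twoheadrightarrow S^I$ is a continuous surjection with $S$ non-torsion and $|I|>\alpha$, then $\pi(mK)=m\pi(K)=(mS)^I$; since $S$ is non-torsion, $mS\neq 0$, so $w((mS)^I)\geq |I|>\alpha$, while simultaneously $w((mS)^I)\leq w(mK)\leq\alpha$, a contradiction.

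For $(c)\Rightarrow(b)$, pass to the Pontryagin dual $A=\widehat K$, where the hypothesis becomes $|mA|>\alpha$ for every $m\geq 1$. The goal becomes the purely algebraic statement: find a countable discrete abelian group $B$ of infinite exponent and $|I|>\alpha$ with $B^{(I)}\hookrightarrow A$ (then $S:=\widehat B$ is metrizable compact non-torsion and dualizing yields $K\twoheadrightarrow S^I$). The case analysis is driven by the structure of $A$. If $r_0(A)>\alpha$, a free subgroup $\Z^{(I)}$ of $A/t(A)$ lifts by freeness to $A$, giving $B=\Z$ and $S=\T$. Otherwise $r_0(A)\leq\alpha$, so $|A/t(A)|\leq\alpha$, and a short calculation forces $|m\,t(A)|>\alpha$ for every $m$; a pigeonhole over the primes (exploiting the monotonic decrease of $|p^kA_p|$ in $k$) isolates a prime $p$ with $|p^kA_p|>\alpha$ for all $k$. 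Writing the $p$-primary component $A_p=D_p\oplus R_p$ for the divisible-reduced decomposition, either $|D_p|>\alpha$ (then $D_p\supseteq \Z(p^\infty)^{(I)}$, giving $B=\Z(p^\infty)$ and $S=\mathbb Z_p$), or $|D_p|\leq\alpha$ and $|p^kR_p|>\alpha$ for every $k$. In this latter case one analyses a basic subgroup $B_{\mathrm{bas}}=\bigoplus_n B_n$ of $R_p$, with $B_n=\bigoplus_{J_n}\Z/p^n$: if $|J_n|>\alpha$ for infinitely many $n$, one extracts $(\bigoplus_k \Z/p^{n_k})^{(I)}\hookrightarrow B_{\mathrm{bas}}\subseteq R_p$ and takes $S=\prod_k\Z/p^{n_k}$; otherwise, purity of $B_{\mathrm{bas}}$ together with the exact sequence $0\to p^kB_{\mathrm{bas}}\to p^kR_p\to R_p/B_{\mathrm{bas}}\to 0$ forces $|R_p/B_{\mathrm{bas}}|>\alpha$, and the embedding must be built by a more delicate construction of independent Prüfer-type chains modulo $B_{\mathrm{bas}}$ inside $R_p$.

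For $(a)\Rightarrow(b)$, assume $K$ is not $\alpha$-singular and fix a surjection $\pi\colon K\twoheadrightarrow S^I$ as produced in $(c)\Rightarrow(b)$. Adapting the construction of \cite{CvM,CvM2,DGM} to the $\alpha$-pseudocompact setting, one exhibits a proper $G_\alpha$-dense subgroup $D_0\subsetneq S^I$ with $r_0(S^I/D_0)\geq 2^\alpha$, exploiting the non-torsion nature of $S$ together with $|I|>\alpha$ to realise $2^\alpha$ independent cosets of elements of infinite order. Setting $D:=\pi^{-1}(D_0)$ gives a dense $\alpha$-pseudocompact subgroup of $K$ with $K/D\cong S^I/D_0$, hence $r_0(K/D)\geq 2^\alpha$, contradicting $c_\alpha$-extremality. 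The main obstacle across the whole proof is the residual subcase of $(c)\Rightarrow(b)$: $R_p$ is reduced (hence contains no nontrivial divisible subgroup) yet has a large divisible quotient modulo its small basic subgroup, so the desired embedding must be constructed by hand rather than through a direct structural splitting, typically via Ulm-theoretic or height-theoretic analysis of $R_p$.
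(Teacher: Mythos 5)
Your architecture does not match the paper's, and the mismatch matters: the paper itself proves only (a)$\Rightarrow$(c) (contrapositive: a surjection onto $S^\beta=(S^\alpha)^\beta=T^\beta$ gives the $G_\alpha$-dense subgroup $\Sigma_\alpha T^\beta$, which meets the diagonal $\Delta T^\beta\cong S^\alpha$ trivially, whence $r_0(T^\beta/\Sigma_\alpha T^\beta)\geq 2^\alpha$, and Proposition \ref{quozd} pulls the failure of $c_\alpha$-extremality back to $K$) and (b)$\Rightarrow$(a) (Lemma \ref{singular->c-extremal}); the remaining equivalence (b)$\Leftrightarrow$(c) is not proved in the paper at all but cited from \cite[Theorem 1.6]{DG1}. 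Measured against this, your (b)$\Rightarrow$(c) argument ($\pi(mK)=(mS)^I$ would have weight both $\leq w(mK)\leq\alpha$ and $\geq |I|>\alpha$) is correct and self-contained, and your (a)$\Rightarrow$(b) is, modulo duality bookkeeping, the paper's own (a)$\Rightarrow$(c) argument: your sketched $G_\alpha$-dense $D_0\leq S^I$ with $r_0(S^I/D_0)\geq 2^\alpha$ is the $\Sigma_\alpha$-product construction, and the pullback $D=\pi^{-1}(D_0)$ is exactly what Proposition \ref{quozd} packages. So you are attempting strictly more than the paper, namely a proof of the cited result.

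The genuine gap is in (c)$\Rightarrow$(b), which your scheme uses twice (on its own, and inside (a)$\Rightarrow$(b)), and it is exactly the part the paper outsources to \cite{DG1}. Two concrete problems. First, the pigeonhole is misstated: from $|m\,t(A)|>\alpha$ for all $m$ you cannot ``isolate a prime $p$ with $|p^kA_p|>\alpha$ for all $k$'' --- take $t(A)=\bigoplus_j\Z(p_j)^{(\alpha^+)}$ over infinitely many distinct primes, where every $A_p$ is bounded. The correct conclusion is a dichotomy: either one such prime exists, or $|A_{p_j}|>\alpha$ for infinitely many primes $p_j$; the second horn (which you omit) is in fact easy, since a $p$-group $P$ satisfies $|P|\leq\max(\aleph_0,r_p(P))$, so $r_{p_j}(A_{p_j})>\alpha$ and $B=\bigoplus_j\Z(p_j)$ works. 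Second, and decisively, your residual sub-case is left unproved and is not vacuous: for $\alpha$ of countable cofinality the torsion-complete group $R_p=t\bigl(\prod_n\Z(p^n)^{(\alpha)}\bigr)$ is reduced, has $|p^kR_p|=\alpha^{\aleph_0}>\alpha$ for every $k$, yet has basic subgroup of cardinality only $\alpha$, so your proof simply stops where the real work begins. For what it is worth, this case (indeed the whole one-prime case, with no need for the divisible--reduced splitting or basic subgroups) can be closed by a transfinite recursion of length $\alpha^+$: if $H\leq A_p$ has $|H|\leq\alpha$ and $m\in\N$, then $\dim\,(p^mA_p)[p]\geq|p^mA_p|>\alpha$ allows one to pick $z\in(p^mA_p)[p]\setminus H$; writing $z=p^mu$ forces $u$ to have order exactly $p^{m+1}$ and $\langle u\rangle\cap H=\{0\}$ (every nonzero subgroup of $\langle u\rangle$ contains $z$), and iterating yields an independent family realizing $\bigl(\bigoplus_k\Z(p^k)\bigr)^{(\alpha^+)}\hookrightarrow A_p$. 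Until some such argument (or the citation of \cite[Theorem 1.6]{DG1}) is supplied, your proof of (c)$\Rightarrow$(b), and hence of the theorem, is incomplete.
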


Using (c) we prove that the free rank of non-$\alpha$-singular $\alpha$-pseudocompact abelian groups is ``large'' (see Proposition \ref{non-sing->big_free-rank}). This allows us to extend the equivalence of (a) and (b) to the more general case of $\alpha$-pseudocompact groups:

\begin{corollary}\label{alpha-extremal_mega-theorem}
Let $\alpha$ be an infinite cardinal. An $\alpha$-pseudocompact abelian group is $c_\alpha$-extremal if and only if it is $\alpha$-singular.
\end{corollary}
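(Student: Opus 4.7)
The plan is to deduce this corollary by combining three results that the excerpt has already stated: Lemma \ref{singular->c-extremal}, Theorem \ref{G>c2}, and Proposition \ref{non-sing->big_free-rank}. One direction, namely that every $\alpha$-singular $\alpha$-pseudocompact abelian group is $c_\alpha$-extremal, is precisely Lemma \ref{singular->c-extremal}, and is already signalled by the arrow $\alpha\text{-singular} \Rightarrow c_\alpha\text{-extremal}$ in the implication diagram. So the only content to establish is the converse.

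For the converse, I would argue contrapositively. Let $G$ be an $\alpha$-pseudocompact abelian group that is not $\alpha$-singular; the goal is to produce a dense $\alpha$-pseudocompact subgroup $D$ of $G$ with $r_0(G/D) \geq 2^\alpha$. The strategy is in two steps. First, I would apply Proposition \ref{non-sing->big_free-rank} to the hypothesis ``$G$ is not $\alpha$-singular'' to conclude that $r_0(G)$ is ``large'' --- at least $2^\alpha$. Second, I would invoke the contrapositive of Theorem \ref{G>c2}: since any $c_\alpha$-extremal $\alpha$-pseudocompact abelian group has small free rank, a group with free rank $\geq 2^\alpha$ cannot be $c_\alpha$-extremal. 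Together these two steps immediately yield the desired failure of $c_\alpha$-extremality, completing the proof of the corollary.

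The main obstacle in this route is not the corollary itself but the ingredient Proposition \ref{non-sing->big_free-rank}, which is where the passage from the compact case of Theorem \ref{MegaThm} to the general $\alpha$-pseudocompact case actually takes place. The natural way to prove that proposition is to pass to the completion $\widetilde{G}$, which is compact by the Comfort--Ross theorem, and then observe that $\widetilde{G}$ is non-$\alpha$-singular if and only if $G$ is, because $mG$ is dense in $m\widetilde{G}$ for each positive integer $m$ and hence $w(mG)=w(m\widetilde{G})$. One may then apply Theorem \ref{MegaThm}(c) to produce a continuous surjective homomorphism $\varphi\colon \widetilde{G}\to S^I$ with $S$ a metrizable compact non-torsion abelian group and $|I|>\alpha$, and finally transfer the product structure of $S^I$ back along $\varphi$ to recover a free rank of at least $2^\alpha$ in $G$ itself. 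The delicate point is counting: one must ensure that enough of the free rank of $\varphi(G)\subseteq S^I$ is carried by $G$ and not only by its completion, which is exactly where the $\alpha$-pseudocompactness of $G$ is used.
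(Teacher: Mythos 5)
Your first direction is fine: $\alpha$-singular $\Rightarrow$ $c_\alpha$-extremal is exactly Lemma \ref{singular->c-extremal}, and that is also how the paper handles it. The problem is the converse, where your two-step argument has a genuine gap: the inequalities you combine do not contradict each other. Theorem \ref{G>c2} says that a $c_\alpha$-extremal group satisfies $r_0(G)\leq 2^\alpha$, so its contrapositive only applies to groups with $r_0(G)>2^\alpha$ \emph{strictly}; Proposition \ref{non-sing->big_free-rank} gives you only $r_0(G)\geq 2^\alpha$ for a non-$\alpha$-singular group. Both bounds are simultaneously satisfiable, namely when $r_0(G)=2^\alpha$, so no contradiction follows. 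This is not a removable technicality: the bound in Theorem \ref{G>c2} is attained (Example \ref{Example-cd}(a) exhibits a $c_\alpha$-extremal $\alpha$-pseudocompact group with $r_0(G)=2^\alpha$), so free rank alone can never rule out $c_\alpha$-extremality in the critical case.

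The paper's proof uses your two ingredients precisely to pin down $r_0(G)=2^\alpha$, and then the real work begins: it embeds the torsion-free quotient $G_1=G/t(G)$ into its divisible hull $\Q^{(S)}$ with $|S|=2^\alpha$, forms the family $\mathcal A$ of subsets $A\subseteq S$ for which $G(A)=\pi^{-1}(\Q^{(A)})$ contains some $N\in\Lambda_\alpha(G)$, and applies the set-theoretic Lemma \ref{CvM-Lemma} (here Proposition \ref{non-sing->big_free-rank} is used again, to verify that every $A\in\mathcal A$ has cardinality $2^\alpha$). The resulting partition $\{P_n\}_{n\in\N}$ of $S$ yields a countable increasing chain of subgroups $V_n=G(P_0\cup\ldots\cup P_n)$ covering $G$; Lemma \ref{cconn0} (via the Baire property of $P_\alpha G$) produces $m\in\N$ and $N\in\Lambda_\alpha(G)$ such that $D=V_m\cap N$ is $G_\alpha$-dense in $N$, and a maximality argument with torsion-free subgroups shows $r_0(N/D)=2^\alpha$. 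This contradicts Lemma \ref{str_c_extr}, which says $c_\alpha$-extremality passes from $G$ to every $N\in\Lambda_\alpha(G)$. None of this machinery (Lemmas \ref{CvM-Lemma}, \ref{cconn0}, \ref{str_c_extr}) appears in your proposal, and without it the converse is unproved. Your sketch of how Proposition \ref{non-sing->big_free-rank} itself is established (passing to $\widetilde G$, using Lemma \ref{alpha-singular} and Theorem \ref{MegaThm}(c), then projecting onto $S^J$ with $|J|=\alpha$ and using compactness of the image) does match the paper, but that proposition is an ingredient, not the missing bridge.
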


The last stage of the proof of Theorem \ref{alpha-extremal-solution} is to show that every $\alpha$-singular $\alpha$-extremal group has weight $\leq\alpha$, applying the torsion case of the theorem.

\subsection*{Notation and terminology}

The symbols $\Z$, $\Prm$, $\N$ and  $\N_+$ are used for the set of integers, the set of primes, the set of non-negative integers and the set of positive integers, respectively. The circle group $\T$ is identified with the quotient group $\R/\Z$ of the reals $\R$ and carries its usual compact topology. For a cardinal $\alpha$ and a set $X$ we denote by $X^\alpha$ the product of $\alpha$ many copies of $X$ and by $X^{(\alpha)}$ the direct sum of $\alpha$ many copies of $X$, that is $\bigoplus_\alpha X$.

Let $G$ be an abelian group. We say that $G$ is torsion if every element of $G$ is torsion. Moreover $G$ is torsion-free if no element of $G$ is torsion. Finally, $G$ is non-torsion if there exists at least one element of $G$, which is not torsion.
The group $G$ is said to be \emph{of exponent $n\in\N_+$} if $n$ is such that $n G=0$. Equivalently we say that it is bounded-torsion. If $m$ is a positive integer, $G[m]=\{x\in G:m x=0\}$ and $\Z(m)$ is the cyclic group of order $m$.

We denote by $r_0(G)$ the free rank of $G$ and by $r_p(G)$ the $p$-rank of $G$, for $p\in\Prm$. Moreover $r(G):=r_0(G)+\sup_{p\in\Prm}r_p(G)$ is the rank of $G$.
If $H$ is a group and $h:G \rightarrow H$ is a homomorphism, then we denote by $\Gamma_h:=\{(x,h(x)), \; x\in G\}$ the graph of $h$.

We recall the definitions of some cardinal invariants.
For a topological space $X$ the \emph{weight} $w(X)$ of $X$ is the minimum cardinality of a base for the topology on $X$. 
Moreover, if $x\in X$,
\begin{itemize}
	\item the \emph{character $\chi(x,X)$ at $x$} of $X$ is the minimal cardinality of a basis of the filter of the neigh\-bor\-hoods of $x$ in $X$, and 
	\item the \emph{character} of $X$ is $\chi(X)=\sup_{x\in X}\chi(x,X)$.
\end{itemize}
Analogously
\begin{itemize}
\item the \emph{pseudocharacter $\psi(x,X)$ at $x$} of $X$ is the minimal cardinality of a family $\mathcal{F}$ of neighborhoods of $x$ in $X$ such that $\bigcap_{U\in {\mathcal F}}U=\{x\}$, and
	\item the \emph{pseudocharacter} of $X$ is $\psi(X)=\sup_{x\in X}\psi(x,X)$.
\end{itemize}
In general $\psi(X)\leq\chi(X)\leq w(X)\leq 2^{|X|}\ \ \text{and}\ \ |X|\leq 2^{w(X)}$.

The interior of a subset $A$ of $X$ is the union of all open sets within $A$ and is denoted by $\text{Int}_X(A)$ and $\overline{A}^X$ is the closure of $A$ in $X$ (sometimes we write only $\overline A$ when there is no possibility of confusion).

Let $G$ be a topological group. 
If $M$ is a subset $G$, then $\langle M\rangle$ is the smallest subgroup of $G$ containing $M$. We denote by $\widetilde G$ the two-sided completion of $G$; in case $G$ is precompact $\widetilde G$ coincides with the Weil completion.

For any abelian group $G$ let $\HG$  be the group of all homomorphisms of $G$ to the circle group $\T$. 
When $(G,\tau)$ is an abelian topological group, the set of $\tau$-continuous homomorphisms $\chi:G\to\T$ (\emph{characters}) is a subgroup of $\HG$ and is denoted by $\widehat{G}$; endowed with the compact-open topology, $\widehat G$ is the Pontryagin dual of $G$.

For undefined terms see \cite{DPS,E,HR}.

\section{The $\alpha$-pseudocompactness}\label{alpha-psc}

For compact groups we recall the following results about cardinal invariants. (We use the second part of (a) without giving reference, because it is a well-known fact.)

\begin{fact}\label{dens1}\emph{\cite{HR,HR2}}
Let $K$ be a compact group of weight $\geq\omega$. Then:
\begin{itemize}
	\item[(a)]$d(K)=\log w(K)$ and $|K|=2^{w(K)}$;
	\item[(b)]$\psi(K)=\chi(K)=w(K)$;
	\item[(c)]$w(K)=|\widehat K|$.
\end{itemize}
\end{fact}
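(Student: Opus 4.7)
The plan is to combine Pontryagin duality with standard cardinal estimates for powers of $\T$. Since $K$ is compact abelian with $w(K)\geq\omega$, the dual $\widehat K$ is a discrete abelian group (necessarily infinite), and the evaluation map $K\to\T^{\widehat K}$, $x\mapsto(\chi(x))_{\chi\in\widehat K}$, is a topological embedding of $K$ as a closed subgroup (characters separate points of $K$, and the image is compact hence closed). I would prove the items in the order (c), (a), (b), since each subsequent item uses the previous.

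For (c), the inequality $w(K)\leq|\widehat K|$ follows from the subbase $\{\chi^{-1}(V):\chi\in\widehat K,\ V\subseteq\T\text{ open}\}$ of $K$, whose cardinality is $|\widehat K|\cdot\omega=|\widehat K|$. The reverse $|\widehat K|\leq w(K)$ is the standard Pontryagin-duality statement $w(G)=w(\widehat G)$ applied with $G=\widehat K$; concretely, starting from a base $\mathcal B$ of $K$ refined to symmetric neighborhoods of $0$, each character is determined by the subfamily of $\mathcal B$ witnessing its preimage of a fixed small neighborhood of $0\in\T$, giving a bound $|\widehat K|\leq w(K)$.

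For (a), the density bound $d(K)\leq\log w(K)$ is immediate from the embedding into $\T^{|\widehat K|}$ and the Hewitt--Marczewski--Pondiczery estimate $d(\T^\kappa)=\log\kappa$. Conversely, a dense $D\subseteq K$ of cardinality $d(K)$ separates characters, so the restriction map $\widehat K\to\T^D$ is injective, giving $|\widehat K|\leq 2^{d(K)}$, whence $\log w(K)\leq d(K)$ by (c). The cardinality statement $|K|=2^{w(K)}$ follows from $|K|\leq|\T|^{|\widehat K|}=2^{w(K)}$ and the general lower bound $|K|\geq 2^{\chi(K)}$ for compact Hausdorff spaces without isolated points, combined with (b).

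For (b), homogeneity reduces everything to $0$. The neighborhoods $\bigcap_{\chi\in F}\chi^{-1}(V)$ with $F\subseteq\widehat K$ finite and $V$ from a countable base at $0\in\T$ form a local base, so $\chi(0,K)\leq|\widehat K|=w(K)$; conversely, the standard group-theoretic identity $w(K)\leq\chi(K)\cdot d(K)\leq\chi(K)\cdot\log w(K)$, together with infinite cardinal arithmetic, forces $\chi(K)\geq\log w(K)$ and hence $w(K)\leq\chi(K)$, giving $\chi(K)=w(K)$. Finally $\psi(0,K)=\chi(0,K)$ uses compactness: given open neighborhoods $\{U_i\}_{i\in I}$ of $0$ with $\bigcap_i U_i=\{0\}$ and any open $W\ni 0$, the cover $K=W\cup\bigcup_i(K\setminus U_i)$ has a finite subcover, so finitely many $U_i$ intersect into a subset of $W$, exhibiting the finite intersections of the $U_i$ as a local base of size $\max(|I|,\omega)$. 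The main obstacle is the reverse inequality $w(K)\leq\chi(K)$ in (b): the naive estimates only give $\chi\leq w$, and the equality crucially requires both $w\leq\chi\cdot d$ and $d\leq\log w$, so (a) must be in hand before attacking (b) to avoid circularity.
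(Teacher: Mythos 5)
The paper offers no proof of Fact \ref{dens1} at all: it is quoted from Hewitt--Ross \cite{HR,HR2}, so your attempt must be measured against the classical arguments. A first structural problem is that your entire setup (Pontryagin duality, characters separating points, the embedding $K\to\T^{\widehat K}$) presupposes that $K$ is \emph{abelian}, whereas the paper needs item (b) for compact groups that are not assumed abelian: Lemma \ref{psi->w} and Proposition \ref{ccomp} are stated for arbitrary groups and invoke Fact \ref{dens1}(b). So even a corrected version of your argument would prove less than what the paper uses.

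Beyond that, two steps genuinely fail. (1) In (a), the bound $d(K)\leq\log w(K)$ does \emph{not} follow from embedding $K$ into $\T^{w(K)}$ and citing Hewitt--Marczewski--Pondiczery: density is not monotone under passing to closed subspaces (e.g.\ $\beta\N$ is separable while its closed subspace $\beta\N\setminus\N$ is not). The standard repair in the abelian case is dual: embed the discrete group $\widehat K$ into a divisible group of the same cardinality, dualize to exhibit $K$ as a \emph{quotient} of a product of $w(K)$ many compact metrizable groups, and use that density does not increase under continuous surjections. (2) In (b), the inference from $w(K)\leq\chi(K)\cdot d(K)\leq\chi(K)\cdot\log w(K)$ to $w(K)\leq\chi(K)$ is a non sequitur: it only works when $\log w(K)<w(K)$. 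If $w(K)$ is a strong limit cardinal, e.g.\ $K=\T^{\beth_\omega}$, then $\log w(K)=w(K)=d(K)$ and your inequality is vacuous; ``$\chi\geq\log w$'' never yields ``$\chi\geq w$'' by itself. The classical argument avoids this, covers the non-abelian case, and gives the stronger statement $w(K)\leq\psi(K)$ in one stroke: every neighborhood of the identity of a compact group contains a closed normal subgroup $N$ with $K/N$ metrizable (finitely many characters in the abelian case, Peter--Weyl in general), so a family of $\psi(K)$ many neighborhoods intersecting in $\{e\}$ produces closed normal subgroups $N_i$ with $\bigcap_i N_i=\{e\}$, and the diagonal map $K\to\prod_{i<\psi(K)}K/N_i$ is a topological embedding into a space of weight $\leq\psi(K)$. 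Finally, a smaller but real gap in (c): sending $\chi$ to the subfamily of a base contained in $\chi^{-1}(V)$ maps $\widehat K$ into the \emph{power set} of the base, which only gives $|\widehat K|\leq 2^{w(K)}$; to get $|\widehat K|\leq w(K)$ you need finite data (a finite basic cover on whose members $\chi$ varies by less than a fixed small amount, together with the observation that two characters uniformly within $1/3$ of each other are equal), or equivalently the uniform discreteness of $\widehat K$ inside $C(K,\T)$.
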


To begin studying extremal $\alpha$-pseu\-do\-com\-pact groups we need a characterization of $\alpha$-pseu\-do\-com\-pact groups similar to that of pseudocompact groups given by Comfort and Ross theorem, that is Theorem \ref{cr-theorem} below. We find this characterization in Theorem \ref{cr-alpha} combining Theorem \ref{cr-theorem} with the following result.

\begin{deff}\emph{\cite{GST}}
Let $X$ be a topological space and let $\alpha$ be an infinite cardinal.
\begin{itemize}
	\item A \emph{$G_\alpha$-set} of $X$ is the intersection of $\alpha$ many open subsets of $X$.
	\item A subset of $X$ is \emph{$G_\alpha$-dense} in $X$ if it has non-empty intersection with every $G_\alpha$-set of $X$.
\end{itemize}
\end{deff}

The $G_\alpha$-sets for $\alpha=\omega$ are the well known $G_\delta$-sets. A topological space $X$ has $\psi(X)\leq\alpha$ precisely when $\{x\}$ is a $G_\alpha$-set of $X$ for every $x\in X$.

\medskip
The next result is a corollary of \cite[Theorem 1.2]{GST}.
If $X$ is a topological space, then we indicate by $\beta X$ its \v Cech-Stone compactification.


\begin{corollary}\label{C_alpha-compact-cor}
Let $\alpha$ be an infinite cardinal and let $X$ be a topological space. Then $X$ is $\alpha$-pseudocompact if and only if $X$ is $G_\alpha$-dense in ${\beta X}$.
\end{corollary}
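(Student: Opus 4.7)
My plan is to give a direct proof of both implications from the universal property of $\beta X$, bypassing the more general \cite[Theorem 1.2]{GST} from which this is derived as a corollary.

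For the $(\Leftarrow)$ direction, I would embed $\R$ as an open dense subset of its one-point compactification $S := \R \cup \{\infty\}$, which is compact and metrizable, and view a given continuous $f : X \to \R^\alpha$ as a continuous map into the compact Hausdorff space $S^\alpha$. By the universal property of $\beta X$ this map extends continuously to $\widetilde f : \beta X \to S^\alpha$. For any $p \in \beta X$, the fibre $\widetilde f^{-1}(\widetilde f(p))$ is the intersection of the $\alpha$ preimages $\widetilde f_\xi^{-1}(\{\widetilde f_\xi(p)\})$; each of these is a $G_\delta$-set of $\beta X$, because $S$ is metrizable (so each singleton in $S$ is a $G_\delta$) and continuous preimages preserve $G_\delta$-sets. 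The fibre is thus a nonempty $G_\alpha$-set of $\beta X$, so by $G_\alpha$-density there is $x \in X$ with $\widetilde f(x) = \widetilde f(p)$; but then $\widetilde f(p) = f(x) \in \R^\alpha$. This simultaneously shows $\widetilde f(\beta X) \subseteq \R^\alpha$ and $\widetilde f(\beta X) \subseteq f(X)$, so $f(X) = \widetilde f(\beta X)$ is a continuous image of a compact space, hence compact in $\R^\alpha$.

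For the $(\Rightarrow)$ direction I would argue the contrapositive. If $X$ is not $G_\alpha$-dense in $\beta X$, pick open sets $U_\xi \subseteq \beta X$ for $\xi < \alpha$ together with a point $p \in \bigcap_{\xi < \alpha} U_\xi$ such that $X \cap \bigcap_{\xi < \alpha} U_\xi = \emptyset$. Normality of $\beta X$ and Urysohn's lemma produce continuous $h_\xi : \beta X \to [0,1]$ with $h_\xi(p) = 0$ and $h_\xi \equiv 1$ on $\beta X \setminus U_\xi$. The bundled map $H := (h_\xi)_{\xi<\alpha} : \beta X \to [0,1]^\alpha \subseteq \R^\alpha$ is continuous and satisfies $H(p) = 0$, so $0 \in H(\beta X) = \overline{H(X)}^{[0,1]^\alpha}$; on the other hand, every $x \in X$ lies outside some $U_\xi$ by hypothesis, so $H(x)$ has a coordinate equal to $1$ and thus cannot be $0$. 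Therefore $H(X)$ is not closed in $[0,1]^\alpha$, so not compact in $\R^\alpha$, and the restriction $H\!\!\upharpoonright_X : X \to \R^\alpha$ witnesses that $X$ is not $\alpha$-pseudocompact.

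The main obstacle is in $(\Leftarrow)$: guaranteeing that the compact extension $\widetilde f$ does not introduce ``points at infinity'' in its image. It is precisely the metrizability of the one-point compactification (which makes every singleton in $S$ a $G_\delta$-set, and hence every singleton in $S^\alpha$ a $G_\alpha$-set) combined with the $G_\alpha$-density hypothesis that rules this out, and this interplay is what makes the characterization work.
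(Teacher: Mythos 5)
Your proposal is correct, but it takes a genuinely different route from the paper: the paper offers no argument of its own, obtaining the statement as an immediate corollary of the characterization of $C_\alpha$-compact subsets in \cite[Theorem 1.2]{GST}, whereas you prove both implications from scratch. Your arguments are sound. In the $(\Leftarrow)$ direction, extending $f$ through the metrizable one-point compactification $S$ of $\R$ is exactly the right device: each singleton of $S$ is a $G_\delta$, so each fibre of the extension $\widetilde f:\beta X\to S^\alpha$ is an intersection of $\alpha$ many $G_\delta$-sets, hence (since $\alpha\cdot\omega=\alpha$) a nonempty $G_\alpha$-set of $\beta X$; $G_\alpha$-density then forces $\widetilde f(\beta X)=f(X)$, which is therefore compact and contained in $\R^\alpha$. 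In the $(\Rightarrow)$ direction, the bundled Urysohn map $H:\beta X\to[0,1]^\alpha$ correctly witnesses non-$\alpha$-pseudocompactness, using that $H(\beta X)=\overline{H(X)}$ (density of $X$ in $\beta X$ plus compactness of $H(\beta X)$) and that a compact subset of the Hausdorff space $[0,1]^\alpha$ would have to be closed. What the paper's route buys is brevity and placement of the result inside the more general relative framework of \cite{GST}; what your route buys is self-containedness --- only the universal property of $\beta X$, metrizability of $S$, and Urysohn's lemma are invoked --- and it isolates precisely where $G_\alpha$-density acts, namely in preventing the Stone extension from introducing points at infinity into the image.
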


The following theorem, due to Comfort and Ross, characterizes pseudocompact groups.

\begin{theorem}\label{cr-theorem}\emph{\cite[Theorem 4.1]{CR}}
Let $G$ be a precompact group. Then the following conditions are equivalent:
\begin{itemize}
\item[(a)]$G$ is pseudocompact;
\item[(b)]$G$ is $\G$-dense in $\widetilde G$;
\item[(c)]$\widetilde G=\beta G$.
\end{itemize}
\end{theorem}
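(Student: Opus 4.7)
The plan is to prove the cyclic chain $(c) \Rightarrow (a) \Rightarrow (b) \Rightarrow (c)$. The first implication is immediate: if $\widetilde G = \beta G$, then any continuous $f \colon G \to \R$ extends to the compact space $\widetilde G$, so $\overline{f(G)}$ is compact and $G$ is pseudocompact.

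For $(a) \Rightarrow (b)$ I argue by contradiction. If $G$ fails to be $\G$-dense in $\widetilde G$, then some nonempty $\G$-subset of $\widetilde G$ avoids $G$; by normality of the compact Hausdorff group $\widetilde G$ one may shrink this to a nonempty closed $\G$-set, and in a normal space every closed $\G$-set is a zero-set $F = \varphi^{-1}(0)$ for some continuous $\varphi \colon \widetilde G \to [0,1]$. Density of $G$ in $\widetilde G$ forces $\inf_{x \in G}\varphi(x) = 0$ while $\varphi|_G > 0$ everywhere, so $1/\varphi|_G$ is a continuous unbounded real-valued function on $G$, contradicting pseudocompactness.

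The main difficulty is $(b) \Rightarrow (c)$. The universal property of the \v Cech-Stone compactification produces a continuous surjection $\pi \colon \beta G \to \widetilde G$ extending the identity on $G$, and I need to prove $\pi$ is a homeomorphism; equivalently, that $G$ is $C^*$-embedded in $\widetilde G$, so that every $f \in C^*(G)$ extends continuously to $\widetilde G$. The group structure is essential here: on a precompact group, a bounded continuous real-valued function extends to $\widetilde G$ precisely when it is uniformly continuous with respect to the two-sided uniformity inherited from $\widetilde G$, so the task reduces to upgrading an arbitrary $f \in C^*(G)$ to a uniformly continuous function using only the $\G$-density of $G$.

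The delicate technical step is an equicontinuity argument. Given $f \in C^*(G)$ and $\epsilon > 0$, the set of $g \in \widetilde G$ for which a suitable countable family of translates of $f$ has oscillation less than $\epsilon$ can be expressed as a countable intersection of open conditions, hence is a $\G$-neighborhood of the identity in $\widetilde G$; $\G$-density of $G$ then places a point of $G$ inside this neighborhood, and a careful compactness argument in $\widetilde G$ produces a neighborhood $U$ of the identity such that $|f(xy) - f(x)| < \epsilon$ for all $x \in G$ and all $y \in U \cap G$. This is exactly the uniform continuity of $f$; extending $f$ to $\widetilde G$ then shows that $\widetilde G$ enjoys the universal property of the \v Cech-Stone compactification, whence $\widetilde G = \beta G$ by uniqueness.
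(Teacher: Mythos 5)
The paper never proves this statement --- it is quoted verbatim from Comfort and Ross \cite[Theorem 4.1]{CR} and used as a black box --- so your attempt has to stand on its own as a proof of the Comfort--Ross theorem, and it does not. The implication (c)$\Rightarrow$(a) as you argue it is circular. The universal property of the \v Cech--Stone compactification extends only \emph{bounded} continuous real-valued functions; an unbounded continuous $f:G\to\R$ never extends continuously over a compact space. In fact, ``every continuous real-valued function on $G$ extends over $\beta G$'' is \emph{equivalent} to pseudocompactness of $G$, so your one-line argument assumes exactly what it must prove. Nor is this implication a formality that can be patched by soft compactification arguments: $\N$ is dense and $C^*$-embedded in the compact space $\beta\N$ without being pseudocompact, so any correct proof of (c)$\Rightarrow$(a) must exploit the fact that $\widetilde G=\beta G$ is a topological \emph{group} in which $G$ is a subgroup (for instance via homogeneity of compact groups, or via the standard lemma --- Lemma \ref{lambda1} of the paper with $\alpha=\omega$ --- that a nonempty closed $G_\delta$-set in a compact group contains a coset of a closed normal $G_\delta$-subgroup with metrizable quotient). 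Your paragraph uses no group-theoretic property of $\widetilde G$ at all, so it cannot be correct.

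The implication (b)$\Rightarrow$(c), which you rightly identify as the main difficulty, is also not proved: the reduction to uniform continuity is fine, but the ``delicate technical step'' is a description of a strategy, not an argument, and the one place where the hypothesis (b) is concretely invoked is vacuous. A ``$G_\alpha$-neighborhood of the identity'' in $\widetilde G$ automatically contains the identity, which already lies in $G$; so ``$G_\delta$-density of $G$ then places a point of $G$ inside this neighborhood'' gives no information whatsoever. In the genuine argument, $G_\delta$-density must be applied to \emph{translates}: from a putative failure of uniform continuity one builds neighborhoods $V_n$ of the identity with $V_{n+1}V_{n+1}\subseteq V_n$ and points $x_n,y_n\in G$ with $x_n^{-1}y_n\in V_n$ and $|f(x_n)-f(y_n)|\geq\epsilon$, forms the closed $G_\delta$-subgroup $N=\bigcap_n V_n$, takes a cluster point $z\in\widetilde G$ of $(x_n)$, and uses $G_\delta$-density to produce a point of $G$ in the coset $zN$, at which the continuity of $f$ is then contradicted; moreover the sets on which ``a countable family of translates of $f$ has oscillation $<\epsilon$'' are not countable intersections of open sets without real work. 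None of this structure (or any workable substitute) appears in your sketch. Of the three implications, only your (a)$\Rightarrow$(b) --- the zero-set argument --- is complete and correct.
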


The next theorem characterizes an $\alpha$-pseu\-do\-com\-pact group in terms of its completion.

\begin{theorem}\label{cr-alpha}
Let $\alpha$ be an infinite cardinal and let $G$ be a precompact group. Then the following conditions are equivalent:
\begin{itemize}
\item[(a)]$G$ is $\alpha$-pseudocompact;
\item[(b)]$G$ is $G_\alpha$-dense in $\widetilde G=\beta G$.
\end{itemize}
\end{theorem}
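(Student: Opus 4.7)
The proof should be essentially a direct combination of Corollary \ref{C_alpha-compact-cor} with the Comfort--Ross Theorem \ref{cr-theorem}. The only piece that needs care is the identification $\widetilde G = \beta G$, since Corollary \ref{C_alpha-compact-cor} refers to the \v Cech--Stone compactification while the statement here refers to the completion.

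For the direction (a) $\Rightarrow$ (b), I would first observe that $\alpha$-pseudocompactness implies pseudocompactness, as noted in the introduction (since $\alpha \geq \omega$). Being pseudocompact and precompact, $G$ falls under the Comfort--Ross theorem, so $\widetilde G = \beta G$. Then Corollary \ref{C_alpha-compact-cor} applied to the topological space $G$ yields that $G$ is $G_\alpha$-dense in $\beta G$, which coincides with $\widetilde G$.

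For the reverse direction (b) $\Rightarrow$ (a), I would start from the hypothesis that $G$ is $G_\alpha$-dense in $\widetilde G$. Since every $G_\delta$-set is a $G_\alpha$-set (because $\alpha \geq \omega$), $G$ is in particular $G_\delta$-dense in $\widetilde G$. By Theorem \ref{cr-theorem} this forces $G$ to be pseudocompact and gives $\widetilde G = \beta G$. Substituting this equality into the hypothesis, $G$ is $G_\alpha$-dense in $\beta G$, and Corollary \ref{C_alpha-compact-cor} then delivers $\alpha$-pseudocompactness.

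There is no real obstacle here: both non-trivial ingredients (Comfort--Ross, giving the identification $\widetilde G = \beta G$ in the precompact pseudocompact setting, and the corollary of \cite{GST} characterising $\alpha$-pseudocompactness via $G_\alpha$-density in $\beta X$) are already available, and the argument is a short bookkeeping exercise that chains them together. The only conceptual point worth highlighting is that one needs pseudocompactness (rather than mere precompactness) to identify $\widetilde G$ with $\beta G$, and this is why in each direction one must first extract pseudocompactness before invoking the $G_\alpha$-density characterisation.
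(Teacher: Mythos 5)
Your proposal is correct and follows essentially the same route as the paper: the paper's proof likewise observes that each of (a) and (b) implies pseudocompactness (for (b), via $G_\delta$-density), so that $\widetilde G=\beta G$ by Theorem \ref{cr-theorem}, and then invokes Corollary \ref{C_alpha-compact-cor} to get the equivalence. Your version merely spells out the two directions separately, with the same two ingredients and the same pivotal identification $\widetilde G=\beta G$.
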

\begin{proof}
Note that both conditions (a) and (b) imply that $G$ is pseudocompact. So in particular $\widetilde G=\beta G$ by Theorem \ref{cr-theorem}.
Then (a)$\Leftrightarrow$(b) is given precisely by Corollary \ref{C_alpha-compact-cor}.
%
\end{proof}

\begin{corollary}\label{alpha-psc<->G_alpha-dense}
Let $\alpha$ be an infinite cardinal. Let $G$ be a topological group and $D$ a dense subgroup of $G$. Then $D$ is $\alpha$-pseudocompact if and only if $D$ is $G_\alpha$-dense in $G$ and $G$ is $\alpha$-pseudocompact.
\end{corollary}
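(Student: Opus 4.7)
The plan is to reduce everything to Theorem \ref{cr-alpha} and to transfer $G_\alpha$-density back and forth along the chain $D\subseteq G\subseteq\widetilde G$. Since $\alpha$-pseudocompactness implies pseudocompactness, hence precompactness by \cite{CR}, and since a topological group containing a dense precompact subgroup is itself precompact, in either direction both $D$ and $G$ turn out to be precompact with common completion $\widetilde D=\widetilde G=\beta G$; so Theorem \ref{cr-alpha} is applicable symmetrically to $D$ and to $G$.

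For the forward direction I would start from Theorem \ref{cr-alpha} applied to $D$: $D$ is $G_\alpha$-dense in $\widetilde G$. Since $D\subseteq G\subseteq\widetilde G$, $G$ is a fortiori $G_\alpha$-dense in $\widetilde G$, hence $\alpha$-pseudocompact by Theorem \ref{cr-alpha}. For $G_\alpha$-density of $D$ in $G$, I would use the standard observation that every open subset of $G$ is the trace on $G$ of an open subset of $\widetilde G$: every non-empty $G_\alpha$-set $S$ of $G$ has the form $S=W\cap G$ with $W$ a non-empty $G_\alpha$-set of $\widetilde G$, and then $D\cap S=D\cap W\neq\emptyset$ because $D\subseteq G$.

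For the converse I would reverse this step: $G$ being $\alpha$-pseudocompact gives $G$ $G_\alpha$-dense in $\widetilde G$ by Theorem \ref{cr-alpha}, and for any non-empty $G_\alpha$-set $W$ of $\widetilde G$, the intersection $W\cap G$ is a non-empty $G_\alpha$-set of $G$, which meets $D$ by hypothesis. Thus $D$ is $G_\alpha$-dense in $\widetilde G=\widetilde D$, and a last application of Theorem \ref{cr-alpha} to $D$ yields $D$ $\alpha$-pseudocompact.

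No serious obstacle is expected: the entire argument rests on Theorem \ref{cr-alpha} together with the bijective correspondence between open sets of $G$ and traces on $G$ of open sets of $\widetilde G$, which produces the analogous correspondence for $G_\alpha$-sets used in both directions.
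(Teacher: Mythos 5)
Your proof is correct and follows essentially the same route as the paper's: both reduce everything to Theorem \ref{cr-alpha} via the identification $\widetilde D=\widetilde G$ and transfer $G_\alpha$-density along the chain $D\subseteq G\subseteq\widetilde G$. The only difference is expository — you spell out the precompactness bookkeeping and the trace correspondence between $G_\alpha$-sets of $G$ and of $\widetilde G$, which the paper leaves implicit.
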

\begin{proof}
Suppose that $D$ is $\alpha$-pseudocompact. It follows that $\widetilde D$ is compact and $D$ is $G_\alpha$-dense in $\widetilde D$ by Theorem \ref{cr-alpha}. Since $D$ is dense in $G$, $\widetilde D=\widetilde G$ and hence $D$ is $G_\alpha$-dense in $G$ and $G$ is $G_\alpha$-dense in $\widetilde G$. By Theorem \ref{cr-alpha} $G$ is $\alpha$-pseudocompact.

Assume that $G$ is $\alpha$-pseudocompact and that $D$ is $G_\alpha$-dense in $G$. Then $G$ is $G_\alpha$-dense in $\widetilde G$ by Theorem \ref{cr-alpha} and $D$ is $G_\alpha$-dense in $\widetilde G$. So $\widetilde G=\widetilde D$ and hence $D$ is $\alpha$-pseudocompact by Theorem \ref{cr-alpha}.
\end{proof}

\begin{lemma}\label{finite_index}
Let $\alpha$ be an infinite cardinal. Let $G$ be a topological group and $H$ an $\alpha$-pseudocompact subgroup of $G$ such that $[G:H]<\infty$. Then $G$ is $\alpha$-pseudocompact.
\end{lemma}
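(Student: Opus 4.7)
The plan is to verify the criterion of Theorem~\ref{cr-alpha}, namely to show that $G$ is precompact and $G_\alpha$-dense in $\widetilde{G}$. The key idea is that finitely many translates of $\widetilde{H}$ fill up $\widetilde{G}$, and $G_\alpha$-density transfers from $H$ to each translate $g_iH$ by the homeomorphism induced by translation.

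First I would establish that $G$ is precompact and identify its completion. Since $H$ is $\alpha$-pseudocompact it is in particular pseudocompact, hence precompact, so the closure $\overline{H}^{\widetilde{G}}$ coincides with the compact completion $\widetilde{H}$. Choosing coset representatives $g_1,\dots,g_n\in G$ with $G=\bigsqcup_{i=1}^n g_iH$, the set $K:=\bigcup_{i=1}^n g_i\widetilde{H}$ is a finite union of compact sets, hence compact and closed in $\widetilde{G}$. Since $G\subseteq K$ and $G$ is dense in $\widetilde{G}$, we obtain $\widetilde{G}=K=\bigcup_{i=1}^n g_i\widetilde{H}$, which is compact. In particular $G$ is precompact and $\widetilde{G}=\beta G$.

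Next I would check $G_\alpha$-density. By Theorem~\ref{cr-alpha} applied to $H$, the subgroup $H$ is $G_\alpha$-dense in $\widetilde{H}$. Since left translation by $g_i$ is a homeomorphism of $\widetilde{G}$ carrying $\widetilde{H}$ onto $g_i\widetilde{H}$ and $H$ onto $g_iH$, each coset $g_iH$ is $G_\alpha$-dense in $g_i\widetilde{H}$. Now let $U=\bigcap_{\beta<\alpha}U_\beta$ be a nonempty $G_\alpha$-set of $\widetilde{G}$, with $U_\beta$ open in $\widetilde{G}$. Pick $x\in U$; by the first step $x\in g_i\widetilde{H}$ for some $i$. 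Then
\[
V:=U\cap g_i\widetilde{H}=\bigcap_{\beta<\alpha}(U_\beta\cap g_i\widetilde{H})
\]
is a nonempty $G_\alpha$-set of $g_i\widetilde{H}$, and the $G_\alpha$-density of $g_iH$ in $g_i\widetilde{H}$ gives $V\cap g_iH\neq\emptyset$, so $U\cap G\neq\emptyset$. This shows $G$ is $G_\alpha$-dense in $\widetilde{G}=\beta G$, and Theorem~\ref{cr-alpha} yields that $G$ is $\alpha$-pseudocompact.

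There is no real obstacle: once the completion of $G$ has been identified with the finite union $\bigcup g_i\widetilde{H}$, the only delicate point is the routine observation that the intersection of a $G_\alpha$-set with a closed subspace is a $G_\alpha$-set of that subspace, which reduces the $G_\alpha$-density of $G$ in $\widetilde{G}$ to the $G_\alpha$-density of $H$ in $\widetilde{H}$ provided by Theorem~\ref{cr-alpha}.
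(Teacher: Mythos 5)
Your proof is correct, but it takes a genuinely different and heavier route than the paper's. The paper's entire proof is one line: write $G$ as the finite union of its cosets $xH$; each coset is homeomorphic to $H$, hence $\alpha$-pseudocompact, and a finite union of $\alpha$-pseudocompact subspaces is $\alpha$-pseudocompact directly from Kennison's definition, since for any continuous $f\colon G\to\R^\alpha$ the image $f(G)=\bigcup_{i=1}^n f(g_iH)$ is a finite union of compact sets. No precompactness, completions, or Theorem~\ref{cr-alpha} are needed. Your argument instead identifies $\widetilde G$ as $\bigcup_{i=1}^n g_i\widetilde H$ and verifies the $G_\alpha$-density criterion; this is sound, including the coset-by-coset transfer of density under translation and the observation that the trace of a $G_\alpha$-set on a closed subspace is a $G_\alpha$-set of that subspace. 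One small ordering quibble: you assert $\widetilde G=\beta G$ as soon as you know $\widetilde G$ is compact, but for precompact $G$ that equality is equivalent to pseudocompactness, which you have not yet proved at that point; it only becomes justified after you establish the $G_\alpha$-density of $G$ in $\widetilde G$ (which gives $G_\delta$-density, hence pseudocompactness via Theorem~\ref{cr-theorem}). What your route buys is an explicit description of the completion of $G$ as a finite union of translates of $\widetilde H$; what the paper's route buys is brevity and complete independence from the completion machinery, working verbatim for any topological group.
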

\begin{proof}
It suffices to note that each (of the finitely many) cosets $x H$ is $\alpha$-pseudocompact.
\end{proof}

The following lemma is the generalization to $\alpha$-pseudocompact groups of \cite[Theorem 3.2]{CRob}.

\begin{lemma}\label{psi->w}
Let $\alpha$ be an infinite cardinal. If $G$ is an $\alpha$-pseudocompact group and $\psi(G)\leq\alpha$, then $G$ is compact and so $w(G)=\psi(G)\leq\alpha$.
\end{lemma}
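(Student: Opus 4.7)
The plan is to push the small-pseudocharacter hypothesis on $G$ up to its completion $\widetilde G$, use that for compact groups weight equals pseudocharacter (Fact \ref{dens1}(b)), and conclude by density.

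Since $G$ is $\alpha$-pseudocompact it is in particular pseudocompact, hence precompact by \cite[Theorem 1.1]{CR}, so $\widetilde G$ is compact and $G$ is $G_\alpha$-dense in $\widetilde G=\beta G$ by Theorem \ref{cr-alpha}. Using $\psi(G,e)\le\alpha$, I would fix a family $\{U_i:i\in I\}$ of open neighborhoods of $e$ in $G$ with $|I|\le\alpha$ and $\bigcap_{i\in I}U_i=\{e\}$, and for each $i$ choose an open $V_i\subseteq\widetilde G$ with $V_i\cap G=U_i$ (so $e\in V_i$). Setting $V:=\bigcap_{i\in I}V_i$ gives a $G_\alpha$-set of $\widetilde G$ containing $e$ with $V\cap G=\{e\}$.

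The key step is to show that in fact $V=\{e\}$, i.e.\ $\psi(\widetilde G,e)\le\alpha$. Suppose toward a contradiction that some $y\in V$ satisfies $y\ne e$. Since $\widetilde G$ is Hausdorff, choose disjoint open neighborhoods $A\ni e$ and $B\ni y$ in $\widetilde G$. Then $V\cap B$ is a non-empty $G_\alpha$-set of $\widetilde G$, so by $G_\alpha$-density of $G$ there exists $g\in V\cap B\cap G\subseteq V\cap G=\{e\}$; but then $e=g\in B$, which is incompatible with $A\cap B=\emptyset$. This contradiction is the heart of the argument and is where $\alpha$-pseudocompactness is really used.

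Having established $\psi(\widetilde G,e)\le\alpha$, homogeneity of the topological group gives $\psi(\widetilde G)\le\alpha$, and Fact \ref{dens1}(b) (trivial if $\widetilde G$ is finite) yields $w(\widetilde G)=\psi(\widetilde G)\le\alpha$. Hence $w(G)\le w(\widetilde G)\le\alpha$, and an $\alpha$-pseudocompact space of weight $\le\alpha$ is compact (noted in \S1), so $G$ is compact. As a compact, hence closed, dense subset of $\widetilde G$, we get $G=\widetilde G$, and applying Fact \ref{dens1}(b) to $G$ itself gives $w(G)=\psi(G)\le\alpha$, completing the proof. I do not anticipate any serious obstacle beyond making the separation argument in the middle paragraph careful; the rest is an assembly of already-stated facts.
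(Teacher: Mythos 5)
Your proposal is correct and follows essentially the same route as the paper: both pass to the compact completion $\widetilde G$, transfer the pseudocharacter bound to $\widetilde G$ by using $G_\alpha$-density of $G$ to show that a $G_\alpha$-set meeting $G$ only in $\{e\}$ must equal $\{e\}$, and then invoke Fact \ref{dens1}(b). The only (cosmetic) difference is the endgame: the paper concludes $G=\widetilde G$ directly because singletons of $\widetilde G$ are $G_\alpha$-sets and $G$ is $G_\alpha$-dense, whereas you deduce $w(G)\leq\alpha$, apply the remark that an $\alpha$-pseudocompact space of weight $\leq\alpha$ is compact, and then use closedness of $G$ in $\widetilde G$.
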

\begin{proof}
Since $\psi(G)\leq\alpha$, it follows that $\{e_G\}=\bigcap_{\lambda<\alpha}U_\lambda$ for neighborhoods $U_\lambda$ of $e_G$ in $G$ and by the regularity of $G$ it is possible to choose every $U_\lambda$ closed in $G$. Let $K=\widetilde G$. Then $\bigcap_{\lambda<\alpha}\overline {U_\lambda}^K$ contains a non-empty $G_\alpha$-set $W$ of $K$. Moreover $G\cap W\subseteq G\cap \bigcap_{\lambda<\alpha}\overline {U_\lambda}^K =\{e_G\}$ and $W\setminus\{e_G\}$ is a $G_\alpha$-set of $K$. Since $G$ is $G_\alpha$-dense in $K$ by Theorem \ref{cr-alpha}, this is possible only if $W=\{e_G\}$. So $\psi(K)\leq\alpha$ and we can conclude that $G=K$ is compact. Moreover $w(G)=\psi(G)$ by Fact \ref{dens1}(b).
\end{proof}

The next proposition covers the implication (c)$\Rightarrow$(b) of Theorem \ref{alpha-extremal-solution}, even for non-necessarily abelian groups.

\begin{proposition}\label{ccomp}
Let $\alpha$ be an infinite cardinal and let $(G,\tau)$ be a compact group of weight $\leq\alpha$. Then $(G,\tau)$ is $s_\alpha$- and $r_\alpha$-extremal.
\end{proposition}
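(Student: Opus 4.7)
The plan is to treat $s_\alpha$-extremality and $r_\alpha$-extremality separately, in both cases exploiting the fact that a compact group of weight $\leq\alpha$ has pseudocharacter $\leq\alpha$. Indeed, if $w(G)\leq\alpha$ is infinite we get $\psi(G)=w(G)\leq\alpha$ by Fact \ref{dens1}(b), while the finite case is trivial. So in $(G,\tau)$ every singleton $\{x\}$ is a $G_\alpha$-set.

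For $s_\alpha$-extremality, suppose $D$ is a dense $\alpha$-pseudocompact subgroup of $G$. Since $(G,\tau)$ is compact we have $\widetilde G=G$, so by Corollary \ref{alpha-psc<->G_alpha-dense} the subgroup $D$ is $G_\alpha$-dense in $G$. Combined with the observation above that $\{x\}$ is a $G_\alpha$-set of $G$ for every $x\in G$, $G_\alpha$-density forces $D\cap\{x\}\neq\emptyset$ for every $x\in G$, i.e.\ $D=G$. Hence $G$ has no proper dense $\alpha$-pseudocompact subgroup.

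For $r_\alpha$-extremality, let $\tau'$ be an $\alpha$-pseudocompact group topology on $G$ with $\tau'\supseteq\tau$. Every $\tau$-neighborhood of $e_G$ is a $\tau'$-neighborhood of $e_G$, so a family of $\tau$-neighborhoods whose intersection is $\{e_G\}$ is also such a family in $\tau'$; this gives $\psi(G,\tau')\leq\psi(G,\tau)\leq\alpha$. Lemma \ref{psi->w} then ensures that $(G,\tau')$ is compact. Thus the identity map $(G,\tau')\to(G,\tau)$ is a continuous bijection from a compact space onto a Hausdorff space, hence a homeomorphism, so $\tau=\tau'$.

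There is no serious obstacle here; the two arguments are essentially one-line applications of the machinery already assembled (Corollary \ref{alpha-psc<->G_alpha-dense}, Lemma \ref{psi->w}, Fact \ref{dens1}(b)). The only mild subtlety is making sure the pseudocharacter comparison goes in the correct direction when passing to a finer topology in the $r_\alpha$ case; after that, compactness of both topologies collapses the refinement.
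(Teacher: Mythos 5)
Your proof is correct, and no step fails: the reduction to $\psi(G,\tau)\leq\alpha$ via Fact~\ref{dens1}(b), the use of Corollary~\ref{alpha-psc<->G_alpha-dense} to get $G_\alpha$-density, and the passage through Lemma~\ref{psi->w} are all legitimate and non-circular (everything you cite precedes the proposition in the paper). Your $r_\alpha$ argument is essentially identical to the paper's: the paper also observes that $\psi(G,\tau')\leq\psi(G,\tau)\leq\alpha$ for a finer topology $\tau'$, applies Lemma~\ref{psi->w} to conclude $(G,\tau')$ is compact, and then concludes $\tau'=\tau$ (you spell out the compact-to-Hausdorff continuous bijection step that the paper leaves implicit). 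Where you genuinely diverge is the $s_\alpha$ half. The paper argues: a dense $\alpha$-pseudocompact subgroup $D$ has $w(D)\leq w(G)\leq\alpha$, and an $\alpha$-pseudocompact space of weight $\leq\alpha$ is compact (as noted in the introduction), so $D$ is compact, hence closed, hence $D=G$ by density. You instead convert $\alpha$-pseudocompactness of $D$ into $G_\alpha$-density in $G$ via Corollary~\ref{alpha-psc<->G_alpha-dense} and play it against the fact that $\psi(G)\leq\alpha$ makes every singleton a $G_\alpha$-set. Your route has the aesthetic advantage of unifying the two halves around the single observation $\psi(G)\leq\alpha$, but it invokes the Comfort--Ross machinery underlying Corollary~\ref{alpha-psc<->G_alpha-dense}; the paper's $s_\alpha$ argument is more elementary, needing only monotonicity of weight under subspaces and closedness of compact subsets, and never mentions $G_\alpha$-density at all.
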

\begin{proof}
First we prove that $(G,\tau)$ is $s_\alpha$-extremal. Let $D$ be a dense $\alpha$-pseudocompact subgroup of $(G,\tau)$. Then $w(D)\leq\alpha$ and so $D$ is compact. So $D$ is closed in $(G,\tau)$ and therefore $D=G$.

Now we prove that $(G,\tau)$ is $r_\alpha$-extremal. Let $\tau'$ be an $\alpha$-pseudocompact group topology on $G$ such that $\tau'\geq\tau$. Since $\psi(G,\tau)\leq\alpha$, it follows that also $\psi(G,\tau')\leq \alpha$.
By Lemma \ref{psi->w} $(G,\tau')$ is compact. Then $\tau'=\tau$.
\end{proof}

\subsection{The family $\Lambda_\alpha(G)$}

Let $G$ be a topological group and $\alpha$ an infinite cardinal. We define $$\Lambda_\alpha(G)=\{N\triangleleft G:N\ \text{closed $G_\alpha$} \}.$$ Usually $\Lambda_\omega(G)$ is denoted by $\Lambda(G)$ (see \cite{CRob,DGM}). For $\alpha\geq\beta$ infinite cardinals $\Lambda_\alpha(G)\supseteq\Lambda_\beta(G)$.

In Theorem \ref{lambda4} we prove that for $\alpha$-pseudocompact groups the families in the following claim coincide.

\begin{claim}\label{obvious-claim}
Let $\alpha$ be an infinite cardinal and let $G$ be a topological group. Then $$\Lambda_\alpha(G)  \supseteq\{N\triangleleft G:\text{closed},\ \psi(G/N)\leq\alpha\} \supseteq\{N\triangleleft G: \text{closed},\ w(G/N)\leq\alpha\}.$$ 
\end{claim}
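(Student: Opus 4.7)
The claim consists of two inclusions, and both are essentially formal consequences of the definitions together with the continuity of the quotient map.

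The plan for the second inclusion is immediate: the inequality $\psi(X)\leq w(X)$ holds for every topological space $X$ (this is recorded in the excerpt just after the definition of $\psi$). So if $N\triangleleft G$ is closed with $w(G/N)\leq\alpha$, then automatically $\psi(G/N)\leq\alpha$, and the second containment follows tautologically.

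For the first inclusion, let $N\triangleleft G$ be closed with $\psi(G/N)\leq\alpha$, and let $q\colon G\to G/N$ denote the quotient homomorphism. Since $G/N$ is a topological group, its pseudocharacter at every point equals its pseudocharacter at the identity; in particular $\psi(e_{G/N},G/N)\leq\alpha$, so there is a family $\{V_\lambda:\lambda<\alpha\}$ of open neighborhoods of $e_{G/N}$ in $G/N$ with
\[
\{e_{G/N}\}=\bigcap_{\lambda<\alpha}V_\lambda.
\]
Applying $q^{-1}$ and using continuity of $q$, I obtain
\[
N \;=\; q^{-1}(\{e_{G/N}\}) \;=\; \bigcap_{\lambda<\alpha} q^{-1}(V_\lambda),
\]
where each $q^{-1}(V_\lambda)$ is open in $G$. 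Hence $N$ is a $G_\alpha$-set of $G$. Since $N$ was assumed closed and normal, $N\in\Lambda_\alpha(G)$, completing the first inclusion.

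There is no real obstacle here: the only small point worth flagging is the remark that in a topological group the pseudocharacter is constant across points (so that $\psi(G/N)\leq\alpha$ really gives the $G_\alpha$-representation of $\{e_{G/N}\}$ one needs), after which the argument is a one-line pullback through $q$.
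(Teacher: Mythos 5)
Your proof is correct and follows essentially the same route as the paper: the second inclusion is the general inequality $\psi(X)\leq w(X)$, and the first is obtained by pulling back a family of $\leq\alpha$ open neighborhoods witnessing $\psi(G/N)\leq\alpha$ through the continuous quotient map $q$, exactly the step the paper leaves implicit in its one-line argument. Your added details (homogeneity, shrinking neighborhoods to open ones) are harmless elaborations of what the paper treats as obvious.
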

\begin{proof}
Let $N$ be a closed normal subgroup of $G$ and suppose that $w(G/N)\leq\alpha$. It follows that $\psi(G/N)\leq\alpha$. So $N$ is a $G_\alpha$-set of $G$ and hence $N\in\Lambda_\alpha(G)$.
\end{proof}

For $\alpha=\omega$, the following lemma is \cite[Lemma 1.6]{CRob}.

\begin{lemma}\label{lambda1}
Let $\alpha$ be an infinite cardinal. Let $G$ be a precompact group and $W$ a $G_{\alpha}$-set of $G$ such that $e_G\in W$. Then $W$ contains some $N\in\Lambda_\alpha(G)$ such that $\psi(G/N)\leq\alpha$.
\end{lemma}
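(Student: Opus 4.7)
The plan is to reduce the statement to the already-known case $\alpha=\omega$, namely \cite[Lemma 1.6]{CRob}. Writing $W=\bigcap_{\lambda<\alpha}U_\lambda$ with each $U_\lambda$ open in $G$ and $e_G\in U_\lambda$, each $U_\lambda$ is trivially a $G_\omega$-set of $G$; applying the $\omega$-version of the present lemma to the precompact group $G$ and to $U_\lambda$ yields $N_\lambda\in\Lambda_\omega(G)$ with $N_\lambda\subseteq U_\lambda$ and $\psi(G/N_\lambda)\leq\omega$.

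Set $N:=\bigcap_{\lambda<\alpha}N_\lambda$. Then $N$ is a closed normal subgroup of $G$ as an intersection of such, and $N\subseteq\bigcap_{\lambda<\alpha}U_\lambda=W$. Since each $N_\lambda$ is a $G_\omega$-set of $G$, the $\alpha$-fold intersection $N$ is a $G_{\alpha\cdot\omega}=G_\alpha$-set, so $N\in\Lambda_\alpha(G)$.

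It remains to bound $\psi(G/N)$. Consider the natural continuous homomorphism
\[
\phi\colon G/N \longrightarrow \prod_{\lambda<\alpha} G/N_\lambda
\]
induced by the quotient maps. Its kernel equals $\bigl(\bigcap_\lambda N_\lambda\bigr)/N=\{e_G N\}$, so $\phi$ is injective. A continuous injection cannot increase pseudocharacter (pull back any realizing family of open neighborhoods from the target through $\phi$, and use injectivity to preserve the intersection), and the standard subadditivity bound for products gives $\psi\bigl(\prod_{\lambda<\alpha}G/N_\lambda\bigr)\leq\alpha\cdot\omega=\alpha$, obtained by choosing, for each coordinate $\lambda$, a family of at most $\omega$ open neighborhoods of $e_G N_\lambda$ in $G/N_\lambda$ witnessing $\psi\leq\omega$, then indexing their cylinder preimages over the $\alpha$ coordinates. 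Hence $\psi(G/N)\leq\alpha$, as required.

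The main (indeed only) substantive ingredient is the cited $\omega$-case; once it is granted, the passage to arbitrary $\alpha$ is purely formal and presents no real obstacle, amounting to intersecting $\alpha$ many ``small'' closed normal subgroups and computing pseudocharacter through the product embedding.
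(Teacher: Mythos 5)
Your proof is correct and is essentially identical to the paper's own argument: both apply \cite[Lemma 1.6]{CRob} to each of the $\alpha$ many open sets defining $W$, intersect the resulting countably-pseudocharacter subgroups $N_\lambda$, and bound $\psi(G/N)\leq\alpha$ via the continuous injective homomorphism $G/N\to\prod_{\lambda<\alpha}G/N_\lambda$. Your write-up merely spells out the pseudocharacter computations (injections do not increase $\psi$, and the cylinder-set bound for the product) that the paper leaves implicit.
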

\begin{proof}
Let $W=\bigcap_{i\in I}U_i$, where $U_i$ are open subsets of $G$ and $|I|=\alpha$. 
Let $i\in I$. Since $U_i$ is a $\G$-set of $G$ containing $e_G$, then there exists $N_i\in\Lambda(G)$ such that $N_i\subseteq U_i$ and $\psi(G/N_i)\leq\omega$ \cite[Lemma 1.6]{CRob}. Let $N=\bigcap_{i\in I}N_i$. Then $N\in\Lambda_\alpha(G)$. Moreover $\psi(G/N)\leq\alpha$ because there exists a continuous injective homomorphism $G/N\to\prod_{i\in I}G/N_i$ and so $\psi(G/N)\leq\psi(\prod_{i\in I}G/N_i)\leq\alpha$.
\end{proof}

\begin{corollary}\label{lambda2}
Let $\alpha$ be an infinite cardinal. If $G$ is precompact and $W$ is a $G_{\alpha}$-set of $G$, then there exist $a\in W$ and $N\in\Lambda_\alpha(G)$ such that $a N\subseteq W$. So a subset $H$ of $G$ is $G_\alpha$-dense in $G$ if and only if $(x N)\cap H\neq \emptyset$ for every $x\in G$ and $N\in\Lambda_\alpha(G)$.
\end{corollary}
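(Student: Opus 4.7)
The plan is to deduce the first assertion directly from Lemma \ref{lambda1} by a translation argument, and then leverage it to characterize $G_\alpha$-density in terms of cosets modulo members of $\Lambda_\alpha(G)$.

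For the first assertion, given a non-empty $G_\alpha$-set $W$ of $G$, I would pick any $a \in W$ and consider $a^{-1} W$. Since left translation by $a^{-1}$ is a homeomorphism of $G$, the set $a^{-1} W$ is again a $G_\alpha$-set of $G$, and it contains $e_G$. Lemma \ref{lambda1} then furnishes some $N \in \Lambda_\alpha(G)$ (even with $\psi(G/N) \leq \alpha$) such that $N \subseteq a^{-1} W$, i.e.\ $a N \subseteq W$, as required. (If $W$ is empty, the statement is vacuously true, or one may simply restrict attention to the non-trivial case.)

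For the characterization of $G_\alpha$-density, the forward direction is immediate: if $H$ is $G_\alpha$-dense in $G$, then for any $x \in G$ and $N \in \Lambda_\alpha(G)$, since $N$ is a closed $G_\alpha$-subset of $G$ and translation is a homeomorphism, $xN$ is itself a (non-empty) $G_\alpha$-set of $G$, hence $(xN) \cap H \neq \emptyset$. Conversely, assume $(xN) \cap H \neq \emptyset$ for all $x \in G$ and $N \in \Lambda_\alpha(G)$, and let $W$ be any non-empty $G_\alpha$-set of $G$. By the first assertion there exist $a \in W$ and $N \in \Lambda_\alpha(G)$ with $aN \subseteq W$; the hypothesis yields $(aN) \cap H \neq \emptyset$, and so $W \cap H \neq \emptyset$, proving $G_\alpha$-density.

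There is no real obstacle here: the entire weight of the corollary is carried by Lemma \ref{lambda1}, which has already been proved, and everything else is a routine homogeneity (translation) argument. The only subtle point worth flagging is that one must observe $e_G \in a^{-1}W$ in order to apply Lemma \ref{lambda1}, but this is automatic from $a \in W$.
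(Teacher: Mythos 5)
Your proof is correct and is exactly the argument the paper intends: the corollary is stated without proof precisely because it follows from Lemma \ref{lambda1} by the routine translation argument you give (translate $W$ by $a^{-1}$ to get a $G_\alpha$-set containing $e_G$, apply the lemma, and translate back), with the density characterization then being immediate. Nothing is missing.
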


The next theorem and the first statement of its corollary were proved in the case $\alpha=\omega$ in \cite[Theorem 6.1 and Corollary 6.2]{CRob}.

\begin{theorem}\label{lambda4}
Let $\alpha$ be an infinite cardinal and let $G$ be a precompact group. Then $G$ is $\alpha$-pseudocompact if and only if $w(G/N)\leq\alpha$ for every $N\in\Lambda_\alpha(G)$.
\end{theorem}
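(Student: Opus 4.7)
Forward direction. Given $N\in\Lambda_\alpha(G)$, I would view $N$ as a $G_\alpha$-neighbourhood of $e_G$ and apply Lemma~\ref{lambda1} to extract $N'\in\Lambda_\alpha(G)$ with $N'\subseteq N$ and $\psi(G/N')\leq\alpha$. Since continuous images of $\alpha$-pseudocompact spaces are $\alpha$-pseudocompact, $G/N'$ is $\alpha$-pseudocompact, and Lemma~\ref{psi->w} then forces it to be compact with $w(G/N')\leq\alpha$. The quotient $G/N\cong(G/N')/(N/N')$ is a continuous image of the compact group $G/N'$, hence compact, and $w(G/N)\leq w(G/N')\leq\alpha$.

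Backward direction. I would use Theorem~\ref{cr-alpha} together with Corollary~\ref{lambda2} applied to $\widetilde G$ to reduce the goal of $\alpha$-pseudocompactness of $G$ to proving that $G\widetilde N=\widetilde G$ for every $\widetilde N\in\Lambda_\alpha(\widetilde G)$ (equivalently, that $(x\widetilde N)\cap G\neq\emptyset$ for every such $\widetilde N$ and every $x\in\widetilde G$). Fix such $\widetilde N$ and set $N:=G\cap\widetilde N$. Writing $\widetilde N=\bigcap_{i\in I}U_i$ with $|I|\leq\alpha$ and each $U_i$ open in $\widetilde G$ yields $N=\bigcap_{i\in I}(G\cap U_i)$, so $N$ is closed, normal and $G_\alpha$ in $G$, i.e.\ $N\in\Lambda_\alpha(G)$. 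The hypothesis then supplies $w(G/N)\leq\alpha$, and the canonical map $G/N\hookrightarrow\widetilde G/\widetilde N$ realises $G/N$ as a dense subgroup of the compact group $\widetilde G/\widetilde N$ (whose weight is also $\leq\alpha$, by $\psi(\widetilde G/\widetilde N)\leq\alpha$ together with Fact~\ref{dens1}(b)). Once $G/N$ is shown to be compact, density makes the embedding onto, giving $G/N=\widetilde G/\widetilde N$ and therefore $G\widetilde N=\widetilde G$.

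The delicate point, and the step I expect to require the most care, is precisely this last compactness of $G/N$: on a precompact group the bounds $w\leq\alpha$ and $\psi\leq\alpha$ are not enough on their own to guarantee compactness, so the hypothesis cannot be applied only at $N$. My plan is to observe that the hypothesis descends to $G/N$ — for any $M\in\Lambda_\alpha(G/N)$ the preimage $\pi^{-1}(M)$ under $\pi\colon G\to G/N$ lies in $\Lambda_\alpha(G)$ and $(G/N)/M\cong G/\pi^{-1}(M)$, so $w((G/N)/M)\leq\alpha$ — and then to combine this with Lemma~\ref{lambda1} inside $G/N$ to produce arbitrarily small $M\in\Lambda_\alpha(G/N)$ with $\psi((G/N)/M)\leq\alpha$ and compact metric-type quotients $(G/N)/M$, from which an inverse-limit argument (the forward-direction mechanism now applicable to $G/N$) forces $G/N$ itself to be compact and closes the backward implication.
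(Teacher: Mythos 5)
Your forward direction is correct and is exactly the paper's argument: Lemma~\ref{lambda1} to shrink $N$ to $N'$ with $\psi(G/N')\leq\alpha$, Lemma~\ref{psi->w} applied to the $\alpha$-pseudocompact continuous image $G/N'$ to get compactness and $w(G/N')\leq\alpha$, then passage to the further continuous image $G/N$. Likewise, your backward reduction via Theorem~\ref{cr-alpha} and Corollary~\ref{lambda2} to the single claim that $G/N$ is compact, where $N=G\cap\widetilde N\in\Lambda_\alpha(G)$, is the paper's reduction. The genuine gap is in your proposed repair of that compactness claim, and the repair is circular: descending the hypothesis to $G/N$ yields only $w\bigl((G/N)/M\bigr)\leq\alpha$ for $M\in\Lambda_\alpha(G/N)$, and Lemma~\ref{lambda1} yields only $\psi\bigl((G/N)/M\bigr)\leq\alpha$; neither makes those quotients compact, since Lemma~\ref{psi->w} requires $\alpha$-pseudocompactness, which is precisely what is unavailable on this side of the equivalence. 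So ``compact metric-type quotients'' is the same unjustified inference (a weight or pseudocharacter bound forcing compactness of a precompact group) pushed one level down; and even granting it, an inverse limit of those quotients would only recover the completion of $G/N$, in which $G/N$ sits densely --- it would not show $G/N$ equals it.

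In fact no repair is possible, because the right-to-left implication as literally stated is false. Take $G=\Q/\Z$ with the topology induced from $\T$ and any $\alpha$: $G$ is precompact and metrizable, so every closed subgroup is a $G_\delta$-set and hence $\Lambda_\alpha(G)$ consists of all closed subgroups, and $w(G/N)\leq w(G)=\omega\leq\alpha$ for each of them (quotient maps of topological groups are open); yet $G$ is not pseudocompact --- a metrizable pseudocompact space is compact --- hence not $\alpha$-pseudocompact. Your instinct about ``the delicate point'' is therefore correct in the strongest sense: it marks a gap in the paper's own proof, which at this spot simply asserts ``by the hypothesis $w(G/N)\leq\alpha$ and so $G/N$ is compact'' with no justification. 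The statement the paper intends, and actually uses later (Lemma~\ref{lambda} cites Theorem~\ref{lambda4} as giving that $G/N$ is \emph{compact} of weight $\leq\alpha$), and which correctly generalizes \cite[Theorem 6.1]{CRob}, reads: $G$ is $\alpha$-pseudocompact if and only if $G/N$ is \emph{compact} for every $N\in\Lambda_\alpha(G)$. Relative to that formulation your proof is complete and needs no extra mechanism: your forward direction already establishes the stronger compact conclusion, and in the backward direction compactness of $G/N$ makes its dense continuous injective image in $\widetilde G/\widetilde N$ all of $\widetilde G/\widetilde N$, exactly as you wrote.
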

\begin{proof}
Suppose that $G$ is $\alpha$-pseudocompact. Since $N\in\Lambda_\alpha(G)$, by Lemma \ref{lambda1} there exists $L\in\Lambda_\alpha(G)$ such that $L\leq N$ and $\psi(G/L)\leq\alpha$. Thanks to Lemma \ref{psi->w} we have that $G/L$ is compact of weight $w(G/L)=\psi(G/L)\leq\alpha$. Since $G/N$ is continuous image of $G/L$, it follows that $w(G/N)\leq\alpha$.

Suppose that $w(G/N)\leq\alpha$ for every $N\in\Lambda_\alpha(G)$. By Corollary \ref{lambda2} and Theorem \ref{cr-alpha} it suffices to prove that $x M\cap G\neq\emptyset$ for every $x\in\widetilde G$ and every $M\in\Lambda_\alpha(\widetilde G)$. Let $\widetilde{\pi}:\widetilde G\to\widetilde G/M$ be the canonical projection, $\pi=\widetilde{\pi}\restriction_G$ and $N=G\cap M$. Hence $N\in\Lambda_\alpha(G)$. By the hypothesis $w(G/N)\leq\alpha$ and so $G/N$ is compact. Since $\pi(G)$ is continuous image of $G/\ker\pi=G/N$, so $\pi(G)$ is compact as well. Since $G$ is dense in $\widetilde G$, it follows that $\pi(G)$ is dense in $\widetilde G/M$ and so $\pi(G)=\widetilde G/M$. Consequently $x M\in\pi(G)=\{g M:g\in G\}$ and hence $x M=g M$ for some $g\in G$, that is $g\in x M\cap G\neq\emptyset$.
\end{proof}

\begin{corollary}\label{lambda8}
Let $\alpha$ be an infinite cardinal. Let $G$ be an $\alpha$-pseudocompact abelian group and let $N\in\Lambda_\alpha(G)$. Then:
\begin{itemize}
	\item[(a)]if $L\in\Lambda_\alpha(N)$, then $L\in\Lambda_\alpha(G)$;
	\item[(b)]$N$ is $\alpha$-pseudocompact;
	\item[(c)]if $L$ is a closed subgroup of $G$ such that $N\subseteq L$, then $L\in\Lambda_\alpha(G)$.
\end{itemize}
\end{corollary}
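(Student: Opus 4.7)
The plan is to reduce each clause to Theorem \ref{lambda4} after handling the underlying set-theoretic topology directly. Throughout, normality is automatic since $G$ is abelian, so only the ``closed'' and ``$G_\alpha$'' conditions need verification when checking membership in the various $\Lambda_\alpha$ families.

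For (a), I would argue directly from the definitions. Closedness is transitive: $L$ closed in $N$ and $N$ closed in $G$ yield $L$ closed in $G$. For the $G_\alpha$ condition, write $L=\bigcap_{i\in I}U_i$ with $|I|\leq\alpha$ and $U_i$ open in $N$, lift each $U_i$ to an open set $V_i$ of $G$ with $V_i\cap N=U_i$, and intersect with an $\alpha$-indexed family of opens of $G$ witnessing $N\in\Lambda_\alpha(G)$; this presents $L$ as the intersection of $\leq\alpha$ open subsets of $G$.

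For (b), I would combine (a) with Theorem \ref{lambda4}. Note first that $N$ is precompact, being a closed subgroup of the precompact group $G$. Given $L\in\Lambda_\alpha(N)$, (a) yields $L\in\Lambda_\alpha(G)$, and Theorem \ref{lambda4} applied to $G$ gives $w(G/L)\leq\alpha$. Since $L\subseteq N$ and the quotient map $G\to G/L$ is open, the quotient topology on $N/L$ coincides with the subspace topology inherited from $G/L$; hence $w(N/L)\leq w(G/L)\leq\alpha$. Applying Theorem \ref{lambda4} in the other direction to $N$ gives that $N$ is $\alpha$-pseudocompact.

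For (c), Theorem \ref{lambda4} applied to $N\in\Lambda_\alpha(G)$ yields $w(G/N)\leq\alpha$. Since $G/N$ is a continuous image of the $\alpha$-pseudocompact group $G$, it is $\alpha$-pseudocompact, and any $\alpha$-pseudocompact space of weight $\leq\alpha$ is compact; hence $G/N$ is compact of weight $\leq\alpha$. Now $L/N$ is a closed subset of $G/N$, and in a regular space of weight $\leq\alpha$ every closed set is a $G_\alpha$-set: pick a basis $\mathcal B$ of cardinality $\leq\alpha$ and, using regularity, observe that the complement of $L/N$ is covered by those $B\in\mathcal B$ whose closures miss $L/N$, so $L/N=\bigcap_B\bigl((G/N)\setminus\overline B\bigr)$. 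Pulling back through the continuous quotient map $G\to G/N$ exhibits $L$ as a $G_\alpha$-set of $G$, so $L\in\Lambda_\alpha(G)$.

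The only mildly delicate points are the identification of the quotient and subspace topologies on $N/L$ used in (b), which relies on openness of $G\to G/L$ together with $L\subseteq N$, and the regularity argument in (c) that closed sets in a compact group of weight $\leq\alpha$ are $G_\alpha$. Both are standard, so I expect the entire corollary to follow from Theorem \ref{lambda4} essentially formally once (a) is established.
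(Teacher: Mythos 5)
Your proof is correct and follows essentially the same route as the paper's: all three parts are reduced to Theorem~\ref{lambda4} plus elementary facts about quotient topologies of topological groups. The only differences are cosmetic --- in (b) you apply Theorem~\ref{lambda4} directly to $L\in\Lambda_\alpha(G)$ obtained from (a), where the paper instead detours through Lemma~\ref{lambda1} to produce some $M\in\Lambda_\alpha(G)$ with $M\subseteq L$, and in (c) you re-derive the content of Claim~\ref{obvious-claim} (a closed subgroup of a quotient of weight $\leq\alpha$ is a $G_\alpha$-set) rather than citing it.
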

\begin{proof}
(a) Since $N$ is closed in $G$ and $L$ is closed in $N$, it follows that $L$ is closed in $G$. Moreover $L$ is a $G_\alpha$-set of $G$, because $N$ is a $G_\alpha$-set of $G$ and $L$ is a $G_\alpha$-set of $N$. 

(b) Let $L\in\Lambda_\alpha(N)$. By (a) $L$ is a $G_\alpha$-set of $G$ and so there exists $M\in\Lambda_\alpha(G)$ such that $M\subseteq L$ by Lemma \ref{lambda1}. By Theorem \ref{lambda4} $w(G/M)\leq\alpha$ and consequently $w(N/M)\leq w(G/M)\leq\alpha$. Since $N/L$ is continuous image of $N/M$, it follows that $w(N/L)\leq w(N/M)\leq\alpha$. Hence $N$ is $\alpha$-pseudocompact by Theorem \ref{lambda4}.

(c) Since $w(G/N)\leq\alpha$ by Theorem \ref{lambda4} and $G/L$ is continuous image of $G/N$, it follows that $w(G/L)\leq\alpha$. Hence $L\in\Lambda_\alpha(G)$ by Claim \ref{obvious-claim}.
\end{proof}

Items (a) and (b) of this corollary and the following lemmas were proved in the pseudocompact case in \cite[Section 2]{DGM}.

\begin{lemma}\label{gdeltad1}
Let $\alpha$ be an infinite cardinal, $G$ an $\alpha$-pseudocompact abelian group and $D$ a subgroup of $G$. Then:
\begin{itemize}
\item[(a)]$D$ is $G_\alpha$-dense in $G$ if and only if $D+N=G$ for every $N\in\Lambda_\alpha(G)$;
\item[(b)]if $D$ is $G_\alpha$-dense in $G$ and $N\in\Lambda_\alpha(G)$, then $D\cap N$ is $G_\alpha$-dense in $N$ and $G/D$ is algebraically isomorphic to $N/(D\cap N)$.
\end{itemize}
\end{lemma}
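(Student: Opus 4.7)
The plan is to reduce part (a) to Corollary \ref{lambda2} and then bootstrap part (b) from part (a) using the stability properties of $\Lambda_\alpha(G)$ proved in Corollary \ref{lambda8}.

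For part (a), Corollary \ref{lambda2} characterizes $G_\alpha$-denseness of a subset $H \subseteq G$ as: $(xN)\cap H \neq \emptyset$ for every $x\in G$ and every $N\in\Lambda_\alpha(G)$. In the abelian setting, the condition $(x+N)\cap D \neq \emptyset$ for every $x\in G$ is simply the assertion that every $x\in G$ can be written as $d+n$ with $d\in D$, $n\in N$, i.e.\ $G=D+N$. So part (a) is an immediate translation. No work here beyond invoking Corollary \ref{lambda2}.

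For part (b), I would first prove that $D\cap N$ is $G_\alpha$-dense in $N$. By Corollary \ref{lambda8}(b), $N$ is itself $\alpha$-pseudocompact, so part (a) applies inside $N$: it suffices to show $(D\cap N)+L=N$ for every $L\in\Lambda_\alpha(N)$. Given such an $L$, Corollary \ref{lambda8}(a) yields $L\in\Lambda_\alpha(G)$, and then part (a) applied to $G$ gives $D+L=G$. An elementary modular-law argument does the rest: for $n\in N$, write $n=d+\ell$ with $d\in D$, $\ell\in L$; since $L\subseteq N$, we get $d=n-\ell\in N$, hence $d\in D\cap N$, showing $(D\cap N)+L=N$.

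For the algebraic isomorphism $G/D\simeq N/(D\cap N)$, I would consider the homomorphism $\varphi:N\to G/D$ defined by $\varphi(n)=n+D$. Its kernel is clearly $D\cap N$, and surjectivity is exactly the statement $G=D+N$ coming from part (a) applied to the given $N\in\Lambda_\alpha(G)$. The first isomorphism theorem then delivers the claimed isomorphism. There is no real obstacle: the content of the lemma is wholly packaged inside Corollary \ref{lambda2} and Corollary \ref{lambda8}, and the only step requiring any care is the intersection argument $(D\cap N)+L=N$, which is the standard Dedekind-style manipulation used in the pseudocompact analogue in \cite{DGM}.
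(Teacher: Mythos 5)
Your proof is correct and takes essentially the same route as the paper, whose own proof is the one-liner that (a) follows from Corollary \ref{lambda2} and (b) follows from (a) together with Corollary \ref{lambda8}. Your write-up simply fills in the details the paper leaves implicit — the translation of $G_\alpha$-density into $D+N=G$, the Dedekind-style step $(D\cap N)+L=N$ using Corollary \ref{lambda8}(a),(b), and the first isomorphism theorem applied to $n\mapsto n+D$ — all of which are accurate.
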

\begin{proof}
(a) follows from Corollary \ref{lambda2} and (b) follows from (a) and Corollary \ref{lambda8}.
\end{proof}

\begin{lemma}\label{denspsc}
Let $\alpha$ be an infinite cardinal, $G$ an $\alpha$-pseudocompact abelian group, $L$ a closed subgroup of $G$ and $\pi:G\rightarrow G/L$ the canonical projection. 
\begin{itemize}
	\item[(a)]If $N\in\Lambda_\alpha(G)$ then $\pi(N)\in\Lambda_\alpha(G/L)$.
	\item[(b)]If $D$ is a $G_\alpha$-dense subgroup of $G/L$ then $\pi^{-1}(D)$ is a $G_\alpha$-dense subgroup of $G$.
\end{itemize}
\end{lemma}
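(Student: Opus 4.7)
For part (a) I would reduce the assertion $\pi(N)\in\Lambda_\alpha(G/L)$ to proving that the subgroup $N+L$ is closed in $G$. The weight part is straightforward: the closure $H:=\overline{N+L}^G$ is a closed subgroup of $G$ containing $N\in\Lambda_\alpha(G)$, so Corollary \ref{lambda8}(c) yields $H\in\Lambda_\alpha(G)$, and Theorem \ref{lambda4} gives $w(G/H)\leq\alpha$; via the isomorphism $(G/L)/(H/L)\cong G/H$ (using $H\supseteq L$, so $H/L$ is closed in $G/L$), this puts $\overline{\pi(N)}^{G/L}=H/L$ into $\Lambda_\alpha(G/L)$ by Claim \ref{obvious-claim}. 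Thus (a) is reduced to the identity $\pi(N)=\overline{\pi(N)}^{G/L}$, equivalently $N+L=H$.

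To prove $N+L$ is closed I would lift the problem to $\widetilde G$. The key preliminary observation is that $\overline{N}^{\widetilde G}$ belongs to $\Lambda_\alpha(\widetilde G)$: indeed, the natural map $G/N\hookrightarrow\widetilde G/\overline{N}^{\widetilde G}$ is a continuous injection of the compact (by Theorem \ref{lambda4}) space $G/N$ into a Hausdorff group with dense image, so the image is all of $\widetilde G/\overline{N}^{\widetilde G}$, which is therefore compact of weight $\leq\alpha$, placing $\overline{N}^{\widetilde G}$ in $\Lambda_\alpha(\widetilde G)$ by Claim \ref{obvious-claim}. Compactness of $\widetilde G$ makes $\overline{L}^{\widetilde G}+\overline{N}^{\widetilde G}$ closed, and since it contains $\overline{N}^{\widetilde G}$ it also belongs to $\Lambda_\alpha(\widetilde G)$ by Corollary \ref{lambda8}(c). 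A direct computation then yields $H=G\cap(\overline{L}^{\widetilde G}+\overline{N}^{\widetilde G})$. Given $g\in H$, write $g=\ell^{*}+n^{*}$ with $\ell^{*}\in\overline{L}^{\widetilde G}$, $n^{*}\in\overline{N}^{\widetilde G}$; I would then apply $G_\alpha$-density of $G$ in $\widetilde G$ (Theorem \ref{cr-alpha}) to a suitable $G_\alpha$-coset contained in $\ell^{*}+\overline{L}^{\widetilde G}\cap\overline{N}^{\widetilde G}\subseteq\overline{L}^{\widetilde G}$ to produce an element $h\in G\cap\overline{L}^{\widetilde G}=L$ with $g-h\in\overline{N}^{\widetilde G}\cap G=N$, concluding $g\in L+N$.

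The main obstacle is this final refinement of $h$: one needs the coset $\ell^{*}+(\overline{L}^{\widetilde G}\cap\overline{N}^{\widetilde G})$ to lie inside a $G_\alpha$-set of $\widetilde G$, which amounts to showing $\overline{L}^{\widetilde G}\cap\overline{N}^{\widetilde G}\in\Lambda_\alpha(\widetilde G)$. I would obtain this via Corollary \ref{lambda8}(a) applied inside $\widetilde G$: it suffices that the intersection be $G_\alpha$ inside $\overline{N}^{\widetilde G}$, and the control on $\psi(\overline{N}^{\widetilde G})$ can be arranged by first invoking Lemma \ref{lambda1} to refine $N$ to a smaller $M\in\Lambda_\alpha(G)$ with $\psi(G/M)\leq\alpha$, running the closedness argument for $M+L$ (where Lemma \ref{psi->w} gives $w(\widetilde G/\overline{M}^{\widetilde G})\leq\alpha$ and hence all closed subgroups of $\overline{M}^{\widetilde G}$ are $G_\alpha$ by Fact \ref{dens1}(b)), and lifting the conclusion back to $N$ via the containment $M\subseteq N$ and the structure of the quotient $G/M$.

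Part (b) is a short consequence of (a) and Lemma \ref{gdeltad1}(a). By the latter, $\pi^{-1}(D)$ is $G_\alpha$-dense in $G$ iff $\pi^{-1}(D)+N=G$ for every $N\in\Lambda_\alpha(G)$; since $L\subseteq\pi^{-1}(D)$, this reduces to $D+\pi(N)=G/L$. Now $G/L$ is $\alpha$-pseudocompact as a continuous image of $G$, and by part (a) we have $\pi(N)\in\Lambda_\alpha(G/L)$, so Lemma \ref{gdeltad1}(a) applied in $G/L$ to the $G_\alpha$-dense subgroup $D$ gives $D+\pi(N)=G/L$ immediately.
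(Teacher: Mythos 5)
Your part (b) is exactly the paper's proof of (b): reduce, via Lemma \ref{gdeltad1}(a), to the identity $D+\pi(N)=G/L$, and obtain that identity from part (a) applied in the $\alpha$-pseudocompact group $G/L$. Your ``weight part'' of (a) is also precisely the paper's proof of (a): the paper notes that $(G/L)/\pi(N)$ is topologically isomorphic to $G/(N+L)$, a continuous image of $G/N$, hence of weight $\leq\alpha$ by Theorem \ref{lambda4}, and then cites Claim \ref{obvious-claim}. That is the paper's \emph{entire} argument; the closedness of $\pi(N)$ in $G/L$ (equivalently, of $N+L$ in $G$), which Claim \ref{obvious-claim} requires, is nowhere addressed there. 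So the issue to which you devote most of your effort is real, and on this point you are more careful than the source.

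However, your closedness argument has a genuine gap, at precisely the step you flag as the main obstacle. The preliminary steps are correct: $\overline{N}^{\widetilde G}\in\Lambda_\alpha(\widetilde G)$, the sum $\overline{L}^{\widetilde G}+\overline{N}^{\widetilde G}$ is closed and belongs to $\Lambda_\alpha(\widetilde G)$, and $\overline{N+L}^{G}=G\cap\bigl(\overline{L}^{\widetilde G}+\overline{N}^{\widetilde G}\bigr)$. But the claim $\overline{L}^{\widetilde G}\cap\overline{N}^{\widetilde G}\in\Lambda_\alpha(\widetilde G)$ is false in general: intersecting a member of $\Lambda_\alpha(\widetilde G)$ with an arbitrary closed subgroup destroys the $G_\alpha$ property; the intersection can even be the trivial subgroup, which is a $G_\alpha$-set of $\widetilde G$ only when $\psi(\widetilde G)\leq\alpha$. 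Your proposed repair does not help: choosing $M\in\Lambda_\alpha(G)$ with $\psi(G/M)\leq\alpha$ bounds the weight of the \emph{quotient} $\widetilde G/\overline{M}^{\widetilde G}$, not of the subgroup $\overline{M}^{\widetilde G}$ itself, whose weight is in general $w(\widetilde G)>\alpha$; so Fact \ref{dens1}(b) does not make closed subgroups of $\overline{M}^{\widetilde G}$ into $G_\alpha$-sets, and the appeal to Corollary \ref{lambda8}(a) never gets started.

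Moreover, no argument can fill this gap, because $N+L$ can genuinely fail to be closed when $L$ is only assumed closed. Take $\alpha=\omega$, $K=\mathbb T^{2^\omega}\times\mathbb T$, $\Sigma=\Sigma_\omega\bigl(\mathbb T^{2^\omega}\bigr)$, a homomorphism $f:\Sigma\to\mathbb T$ acting as the (finite) sum of coordinates on the direct summand $\mathbb T^{(2^\omega)}$ and as $0$ on a complementary summand, a dense proper subgroup $A$ of $\mathbb T$, and $G=\{(d,f(d)+a):d\in\Sigma,\ a\in A\}$. Then $G$ is $G_\delta$-dense in $K$ (hence pseudocompact), $N=G\cap(\mathbb T^{2^\omega}\times\{0\})\in\Lambda_\omega(G)$, and $L=G\cap(\{0\}\times\mathbb T)=\{0\}\times A$ is closed in $G$; yet $N+L=\{(d,a):f(d)\in A,\ a\in A\}$ is proper in $G$ and dense in $G$ (it contains $\ker f\times A$, whose closure is already all of $K$, since $\ker f$ is dense in $\mathbb T^{2^\omega}$: prescribe finitely many coordinates and correct the sum on one further coordinate), hence not closed. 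Note that here $\overline{L}^{\widetilde G}\cap\overline{N}^{\widetilde G}=(\{0\}\times\mathbb T)\cap(\mathbb T^{2^\omega}\times\{0\})$ is exactly the trivial subgroup mentioned above, and one can check that (b) fails as well: $\pi(\ker f\times\{0\})$ is $G_\delta$-dense in $G/L$ while its preimage $\ker f\times A$ misses the non-empty $G_\delta$-set $G\cap(\mathbb T^{2^\omega}\times\{t\})$ for any $t\notin A$. The statement is sound when $L$ is compact (then $N+L$ is the sum of a closed set and a compact set, hence closed) --- which is the situation in the paper's applications through Proposition \ref{quozd} in Theorems \ref{MegaThm} and \ref{alpha-extremal-solution} --- but for arbitrary closed $L$ the closedness you are trying to prove, and with it the lemma as literally stated, simply fails.
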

\begin{proof}
(a) By Theorem \ref{lambda4} we have $w(G/N)\leq\alpha$. Since $(G/L)/\pi(N)= (G/L)/((N+L)/L)$ and $(G/L)/((N+L)/L)$ is topologically isomorphic to $G/(N+L)$, it follows that $w((G/L)/\pi(N))\leq\alpha$. Hence $\pi(N)\in\Lambda_\alpha(G/L)$ by Claim \ref{obvious-claim}.

(b) Let $N\in\Lambda_\alpha(G)$. Since $\pi(N+\pi^{-1}(D))=\pi(N)+D$ and by (a) $\pi(N)\in\Lambda_\alpha(G/L)$, Lemma \ref{gdeltad1}(a) implies that $\pi(N)+D=G/L$. Then $\pi(N+\pi^{-1}(D))=G/L$ and so $N+\pi^{-1}(D)=G$. By Lemma \ref{gdeltad1}(a) $\pi^{-1}(D)$ is $G_\alpha$-dense in $G$.
\end{proof}

The next proposition shows the stability under taking quotients of $d_\alpha$- and $c_\alpha$-extremality.

\begin{proposition}\label{quozd}
Let $\alpha$ be an infinite cardinal. Let $G$ be an $\alpha$-pseu\-do\-com\-pact abelian group and let $L$ be a closed subgroup of $G$. If $G$ is $d_\alpha$- (respectively, $c_\alpha$-) extremal, then $G/L$ is $d_\alpha$- (respectively, $c_\alpha$-) extremal.
\end{proposition}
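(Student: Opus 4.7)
The plan is to take a dense $\alpha$-pseudocompact subgroup $D'$ of $G/L$ and lift it to a dense $\alpha$-pseudocompact subgroup $D$ of $G$ by $D=\pi^{-1}(D')$, where $\pi:G\to G/L$ is the canonical projection; then use the natural isomorphism $G/D\cong (G/L)/D'$ to transfer the extremality condition from $G$ to $G/L$.

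First I would note that $G/L$ is itself $\alpha$-pseudocompact, being a continuous image of the $\alpha$-pseudocompact group $G$. Let $D'$ be any dense $\alpha$-pseudocompact subgroup of $G/L$. By Corollary \ref{alpha-psc<->G_alpha-dense} applied to $D'\leq G/L$, $D'$ is $G_\alpha$-dense in $G/L$. Setting $D=\pi^{-1}(D')$, Lemma \ref{denspsc}(b) gives that $D$ is $G_\alpha$-dense in $G$, hence in particular dense; and again by Corollary \ref{alpha-psc<->G_alpha-dense} (now applied to $D\leq G$, using that $G$ is $\alpha$-pseudocompact), $D$ is $\alpha$-pseudocompact.

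Since $L=\ker\pi\subseteq D$, the third isomorphism theorem gives a (topological) isomorphism
\[
G/D\;=\;G/\pi^{-1}(D')\;\cong\;(G/L)/D'.
\]
If $G$ is $d_\alpha$-extremal, then $G/D$ is divisible, whence $(G/L)/D'$ is divisible, proving $d_\alpha$-extremality of $G/L$. If $G$ is $c_\alpha$-extremal, then $r_0(G/D)<2^\alpha$, whence $r_0((G/L)/D')<2^\alpha$, proving $c_\alpha$-extremality of $G/L$. Since $D'$ was an arbitrary dense $\alpha$-pseudocompact subgroup of $G/L$, the conclusion follows.

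I do not anticipate any real obstacle here: the argument is essentially a matter of combining the already-established lifting of $G_\alpha$-density via $\pi^{-1}$ (Lemma \ref{denspsc}(b)) with the characterization of $\alpha$-pseudocompactness of dense subgroups (Corollary \ref{alpha-psc<->G_alpha-dense}); the only point requiring a little care is checking that the lift $D=\pi^{-1}(D')$ actually produces the \emph{same} quotient $G/D\cong (G/L)/D'$, which is immediate from $L\subseteq D$.
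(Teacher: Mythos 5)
Your proof is correct and follows essentially the same route as the paper's: lifting a dense $\alpha$-pseudocompact (equivalently, $G_\alpha$-dense) subgroup of $G/L$ via $\pi^{-1}$ using Lemma \ref{denspsc}(b), and transferring divisibility or free-rank conditions through the isomorphism $G/\pi^{-1}(D')\cong (G/L)/D'$. The only difference is cosmetic: you argue directly while the paper argues by contrapositive, and you spell out the appeals to Corollary \ref{alpha-psc<->G_alpha-dense} that the paper leaves implicit.
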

\begin{proof}
Let $\pi:G\rightarrow G/L$ be the canonical projection. If $D$ is a $G_\alpha$-dense subgroup of $G/L$, by Lemma \ref{denspsc}(b) $\pi^{-1}(D)$ is $G_\alpha$-dense in $G$. Moreover $G/\pi^{-1}(D)$ is algebraically isomorphic to $(G/L)/D$.

Suppose that $G/L$ is not $d_\alpha$-extremal. Then there exists a $G_\alpha$-dense subgroup $D$ of $G/L$ such that $(G/L)/D$ is not divisible. Therefore $G/\pi^{-1}(D)$ is not divisible, hence $G$ is not $d_\alpha$-extremal.
If $G/L$ is not $c_\alpha$-extremal. Then there exists a $G_\alpha$-dense subgroup $D$ of $G/L$ such that $r_0((G/L)/D)\geq 2^\alpha$.
Consequently $r_0(G/\pi^{-1}(D))\geq 2^\alpha$, so $G$ is not $c_\alpha$-extremal.
\end{proof}

In this proposition we consider only $d_\alpha$- and $c_\alpha$-extremality. Indeed Theorem \ref{alpha-extremal-solution} and Example \ref{Example-cd} prove that, for $\alpha$-pseudocompact abelian groups, these are the only levels of extremality that are not equivalent to having weight $\leq\alpha$.

\subsection{The $P_\alpha$-topology}

A topological space $X$ is of \emph{first category} if it can be written as the union of countably many nowhere dense subsets of $X$. Moreover $X$ is of \emph{second category (Baire)} if it is not of first category, i.e., if for every family $\{U_n\}_{n\in\N}$ of open dense subsets of $X$, also $\bigcap_{n\in\N}U_n$ is dense in $X$.

\bigskip
Let $\alpha$ be an infinite cardinal and let $\tau$ be a topology on a set $X$. Then $P_\alpha\tau$ denotes the topology on $X$ generated by the $G_\alpha$-sets of $X$, which is called \emph{$P_\alpha$-topology}. Obviously $\tau\leq P_\alpha\tau$. If $X$ is a topological space, we denote by $P_\alpha X$ the set $X$ endowed with the $P_\alpha$-topology.

\medskip
The following theorem is the generalization for topological groups of \cite[Lemma 2.4]{CRob1} to the $P_\alpha$-topology.

\begin{theorem}\label{baire1}
Let $\alpha$ be an infinite cardinal and let $G$ be an $\alpha$-pseu\-do\-com\-pact group. Then $P_\alpha G$ is Baire.
\end{theorem}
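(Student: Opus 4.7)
The plan is to verify Baire category directly. Fix a sequence $\{U_n\}_{n \in \N}$ of dense open subsets of $P_\alpha G$ and a non-empty basic open set $W$ of $P_\alpha G$, which we may take to be a $G_\alpha$-set of $G$; the goal is to produce a point of $W \cap \bigcap_n U_n$. The mechanism is a nested-coset construction inside $G$, closed up by compactness of $\widetilde G$.

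First, build recursively pairs $(c_n, N_n)$ with $c_n \in G$ and $N_n \in \Lambda_\alpha(G)$ such that $N_{n+1} \subseteq N_n$ and
\[
c_n N_n \;\subseteq\; W \cap U_0 \cap \cdots \cap U_n.
\]
At the base step, $W \cap U_0$ is a non-empty $P_\alpha$-open set, so it contains a non-empty $G_\alpha$-set of $G$; Corollary \ref{lambda2} then produces $c_0 \in G$ and $N_0 \in \Lambda_\alpha(G)$ with $c_0 N_0 \subseteq W \cap U_0$. At the inductive step, density of $U_{n+1}$ in $P_\alpha G$ guarantees that $c_n N_n \cap U_{n+1}$ is non-empty; as it is $P_\alpha$-open, it contains a coset $c_{n+1} N_{n+1}'$ by the same application of Corollary \ref{lambda2}, and I replace $N_{n+1}'$ by $N_{n+1}' \cap N_n$ (still in $\Lambda_\alpha(G)$, and still producing a subset coset since $N_n$ is normal) to enforce $N_{n+1} \subseteq N_n$ and $c_{n+1} N_{n+1} \subseteq c_n N_n \cap U_{n+1}$.

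Second, transfer the construction to the compact group $\widetilde G$. For each $n$, Theorem \ref{lambda4} gives $w(G/N_n) \leq \alpha$; since $G/N_n$ is $\alpha$-pseudocompact of weight $\leq \alpha$, it is compact, so its dense image inside $\widetilde G / \overline{N_n}^{\widetilde G}$ is a compact dense subgroup and hence equal to $\widetilde G / \overline{N_n}^{\widetilde G}$. This simultaneously yields $G \cap \overline{N_n}^{\widetilde G} = N_n$ and $w(\widetilde G/\overline{N_n}^{\widetilde G}) \leq \alpha$, so $\overline{N_n}^{\widetilde G}$ is itself a $G_\alpha$-set of $\widetilde G$ by Fact \ref{dens1}(b). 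The decreasing sequence $\{c_n \overline{N_n}^{\widetilde G}\}_n$ consists of non-empty closed subsets of the compact group $\widetilde G$, so its intersection $Z$ is non-empty, and as a countable intersection of $G_\alpha$-sets it is itself a $G_\alpha$-set of $\widetilde G$. By Theorem \ref{cr-alpha}, $G$ is $G_\alpha$-dense in $\widetilde G$, so I may pick $x \in G \cap Z$. Then for every $n$, $c_n^{-1} x \in G \cap \overline{N_n}^{\widetilde G} = N_n$, so $x \in c_n N_n \subseteq W \cap U_n$, as required.

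The principal obstacle is the second step — transferring between $G$ and $\widetilde G$. It rests on the compactness (not merely precompactness) of $G/N_n$, which is exactly the content of Theorem \ref{lambda4} combined with the basic observation that $\alpha$-pseudocompact spaces of weight $\leq \alpha$ are compact; this compactness is what gives both the key identity $G \cap \overline{N_n}^{\widetilde G} = N_n$ and the recognition of $\overline{N_n}^{\widetilde G}$ as a $G_\alpha$-set of $\widetilde G$. The nested-coset recursion itself is a routine Baire-style argument mirroring the classical $\alpha = \omega$ proof of \cite[Lemma 2.4]{CRob1}.
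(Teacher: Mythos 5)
Your proof is correct, and while it rests on the same underlying mechanism as the paper's (a nested-coset recursion closed up by compactness of the completion), it is organized genuinely differently. The paper argues in the completion $K=\widetilde G$ throughout: it first shows that $P_\alpha K$ is Baire by running the nested-coset argument inside $K$ (cosets $xN$ with $N\in\Lambda_\alpha(K)$ form a base of $P_\alpha K$ by Corollary \ref{lambda2}, and a decreasing sequence of such cosets has non-empty intersection by compactness of $K$), and then descends to $G$ by quoting \cite[Lemma 2.4(b)]{CRob1} --- a $G_\delta$-dense subspace of a Baire space is Baire --- together with the $G_\alpha$-density of $G$ in $K$ (Theorem \ref{cr-alpha}). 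You instead run the recursion inside $G$ itself and invoke the completion only at the end: take the closures $\overline{N_n}^{K}$, obtain a point of $\bigcap_n c_n\overline{N_n}^{K}$ by compactness, and pull it back into $G$ by $G_\alpha$-density. Your route is self-contained within the paper's own toolkit: it avoids the external lemma on $G_\delta$-dense subspaces, and also avoids the identification (implicit in the paper's last sentence) of $P_\alpha G$ with the topology $G$ inherits from $P_\alpha K$; the price is the transfer bookkeeping of your second step. Conversely, the paper's version is shorter given the cited lemma and establishes the additional fact that $P_\alpha K$ itself is Baire. One small repair to your write-up: the surjectivity of the induced map $G/N_n\to K/\overline{N_n}^{K}$ does not ``simultaneously yield'' the identity $G\cap\overline{N_n}^{K}=N_n$; that identity is immediate and has nothing to do with compactness, since $\overline{N_n}^{K}\cap G$ is precisely the closure of $N_n$ in the subspace $G$, and $N_n$ is closed in $G$. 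The compactness of $G/N_n$ is genuinely needed only for the other half of your claim, namely that $w(K/\overline{N_n}^{K})\leq\alpha$ and hence that $\overline{N_n}^{K}$ is a $G_\alpha$-set of $K$.
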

\begin{proof}
Let $K$ be the completion of $G$. Then $K$ is compact by Theorem \ref{cr-theorem}.
Let $\mathcal B=\{x N: x\in K, N\in\Lambda_\alpha(K)\}$. By Corollary \ref{lambda2} $\mathcal B$ is a base of $P_\alpha K$. Consider a family $\{U_n\}_{n\in\N}$ of open dense subsets of $P_\alpha K$. We can chose them so that $U_n\supseteq U_{n+1}$ for every $n\in\N$. Let $A\in P_\alpha\tau$, $A\neq\emptyset$. Then $A\cap U_0$ is a non-empty element of $P_\alpha\tau$. Therefore there exists $B_0\in\mathcal{B}$ such that $B_0\subseteq A\cap U_0$. We proceed by induction. If $B_n\in\mathcal B$ has been defined, then $B_n\cap U_{n+1}$ is a non-empty open set in $P_\alpha K$ and so there exists $B_{n+1}\in\mathcal B$ such that $B_{n+1}\subseteq B_n\cap V_{n+1}$. Then $$A\cap\bigcap_{n\in\N}U_n=\bigcap_{n\in\N}(A\cap U_n)\supseteq\bigcap_{n\in\N}B_n.$$
Moreover $\bigcap_{n\in\N}B_n\neq\emptyset$, because $\{B_n\}_{n\in\N}$ is a decreasing sequence of closed subsets of $K$, which is compact.

By \cite[Lemma 2.4(b)]{CRob1} a $G_\delta$-dense subspace of a Baire space is Baire. Being $\alpha$-pseudocompact, $G$ is $G_\alpha$-dense in $K$ by Theorem \ref{cr-alpha}. Then $G$ is $G_\alpha$-dense in $P_\alpha K$, which is Baire by the previous part of the proof, and so $P_\alpha G$ is Baire.
\end{proof}

Let $G$ be an abelian group. In what follows $G^\#$ denotes $G$ endowed with the \emph{Bohr topology}, i.e., the initial topology of all elements of $\HG$.

\smallskip
The following lemma is the generalization to the $P_\alpha$-topology of \cite[Theorem 5.16]{CRob}.

\begin{lemma}\label{baire4}
Let $\alpha$ be an infinite cardinal and let $(G,\tau)$ be a precompact abelian group such that every $h\in\HGemph$ is $P_\alpha\tau$-continuous. Then $(G,P_\alpha\tau)=P_\alpha G^\#$.
\end{lemma}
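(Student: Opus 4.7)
The plan is to show the equality of topologies $(G,P_\alpha\tau)=P_\alpha G^\#$ on $G$ by establishing both inequalities. Both topologies are generated by $G_\alpha$-sets (of $\tau$ and of the Bohr topology respectively), and the key observation is that the operator $P_\alpha$ is monotone on topologies, so the work reduces to comparing $\tau$ with the Bohr topology and to verifying that $P_\alpha\tau$ is stable under $\leq\alpha$-sized intersections of open sets.

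The easy inclusion $P_\alpha\tau\leq P_\alpha G^\#$ comes from precompactness of $(G,\tau)$: a precompact abelian group carries the initial topology with respect to its continuous characters $\widehat G\subseteq \HG$, whereas the Bohr topology is initial with respect to all of $\HG$. Hence $\tau\leq G^\#$, so every $\tau$-open set is $G^\#$-open, every $G_\alpha$-set of $\tau$ is a $G_\alpha$-set of $G^\#$, and therefore $P_\alpha\tau\leq P_\alpha G^\#$.

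For the reverse inclusion I would use the hypothesis directly: saying that every $h\in\HG$ is $P_\alpha\tau$-continuous is precisely saying that $G^\#\leq P_\alpha\tau$, since $G^\#$ is by definition the initial topology for $\HG$. Applying $P_\alpha$ to this inequality would yield $P_\alpha G^\#\leq P_\alpha(P_\alpha\tau)$, and the claim will then follow once we show the idempotency $P_\alpha(P_\alpha\tau)=P_\alpha\tau$.

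The main (if routine) obstacle is this idempotency, i.e., proving that an intersection of $\leq\alpha$ many $P_\alpha\tau$-open sets is again $P_\alpha\tau$-open. Since $G_\alpha$-sets of $\tau$ are closed under $\leq\alpha$-intersections, every $P_\alpha\tau$-open set is a union of $G_\alpha$-sets of $\tau$. Writing $U_i=\bigcup_{j\in J_i}V_{ij}$ for $i$ in an index set $I$ of size $\leq\alpha$ with each $V_{ij}$ a $G_\alpha$-set of $\tau$, distributivity gives
\[
\bigcap_{i\in I} U_i\;=\;\bigcup_{f\in\prod_{i\in I}J_i}\;\bigcap_{i\in I}V_{i,f(i)},
\]
and each inner intersection is an intersection of at most $\alpha$ many $G_\alpha$-sets of $\tau$, hence itself a $G_\alpha$-set of $\tau$. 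Thus the whole expression is $P_\alpha\tau$-open, $P_\alpha\tau$ is closed under $\leq\alpha$-intersections, and in particular every $G_\alpha$-set of $G^\#$ is $P_\alpha\tau$-open, which yields $P_\alpha G^\#\leq P_\alpha\tau$ and completes the comparison.
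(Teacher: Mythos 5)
Your proof is correct and follows essentially the same route as the paper's: the hypothesis gives $\tau_G^\#\leq P_\alpha\tau$, monotonicity and idempotency of $P_\alpha$ give $P_\alpha G^\#\leq P_\alpha P_\alpha\tau=P_\alpha\tau$, and precompactness gives $\tau\leq\tau_G^\#$, hence $P_\alpha\tau\leq P_\alpha G^\#$. The only difference is that you spell out the idempotency $P_\alpha P_\alpha\tau=P_\alpha\tau$ (via the distributivity argument), which the paper uses without proof.
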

\begin{proof}
Let $\tau_G^\#$ be the Bohr topology on $G$, that is $G^\#=(G,\tau_G^\#)$. By the hypothesis $\tau_G^\#\leq P_\alpha\tau$. Then $P_\alpha \tau_G^\#\leq P_\alpha P_\alpha\tau=P_\alpha\tau$. Moreover $\tau\leq \tau_G^\#$ yields $P_\alpha\tau\leq P_\alpha \tau_G^\#$.
\end{proof}

In the first part of the theorem we give a more detailed description of the topology $P_\alpha G^\#$, while the equivalence of (a), (b) and (c) is the counterpart of \cite[Theorem 5.17]{CRob} for the $P_\alpha$-topology. Some ideas in the proof of this second part are similar to those in the proof of \cite[Theorem 5.17]{CRob}, but ours is shorter and simpler, thanks to the description of the topology $P_\alpha G^\#$ in algebraic terms. 

\begin{theorem}\label{baire5}
Let $\alpha$ be an infinite cardinal and let $G$ be an abelian group. Then $\Lambda'_\alpha(G^\#):=\{N\leq G: |G/N|\leq 2^\alpha\}\subseteq\Lambda_\alpha(G^\#)$ is a local base at $0$ of $P_\alpha G^\#$. Consequently the following conditions are equivalent:
\begin{itemize}
	\item[(a)]$|G|\leq 2^\alpha$;
	\item[(b)]$P_\alpha G^\#$ is discrete;
	\item[(c)]$P_\alpha G^\#$ is Baire.
\end{itemize}
\end{theorem}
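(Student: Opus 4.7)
My plan is first to establish the structural claim that $\Lambda'_\alpha(G^\#)$ sits inside $\Lambda_\alpha(G^\#)$ and forms a local base at $0$ for $P_\alpha G^\#$, and then to derive the three equivalences. Both parts of the structural claim rest on a single algebraic input: every abelian group $A$ with $|A|\leq 2^\alpha$ embeds as an abstract group into $\T^\alpha$. This follows because $\T^\alpha$ is divisible of cardinality $2^\alpha$, with free rank and every $p$-rank equal to $2^\alpha$ (these ranks are computed directly from the decomposition $\T\cong\Q/\Z\oplus\Q^{(\mathfrak c)}$ and the identity $|\T^\alpha[p]|=p^\alpha=2^\alpha$), so $\T^\alpha$ contains every divisible abelian group of cardinality $\leq 2^\alpha$, and in particular the divisible hull of $A$, into which $A$ itself embeds.

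Given $N\in\Lambda'_\alpha(G^\#)$, applying the embedding to $A=G/N$ yields characters $\chi_i:G\to\T$ ($i<\alpha$) with $N=\bigcap_{i<\alpha}\ker\chi_i$; each $\ker\chi_i$ is closed and $G_\omega$ in $G^\#$ as the preimage of $\{0\}\subseteq\T$ under a Bohr-continuous map, so $N$ is closed and $G_\alpha$, hence in $\Lambda_\alpha(G^\#)$. Conversely, a $G_\alpha$-set $W\ni 0$ of $G^\#$ contains the common kernel $N$ of at most $\alpha$ characters: write $W$ as an intersection of $\alpha$ Bohr-basic neighborhoods of $0$ and collect the $\leq\alpha\cdot\omega=\alpha$ characters appearing; the resulting map $G\to\T^\alpha$ embeds $G/N$ into $\T^\alpha$, so $|G/N|\leq 2^\alpha$ and $N\in\Lambda'_\alpha(G^\#)$. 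The implication (a)$\Rightarrow$(b) follows by taking $N=\{0\}$ in the local-base property (so $\{0\}$ is a basic open neighborhood of $0$ in $P_\alpha G^\#$), and (b)$\Rightarrow$(c) is immediate since discrete spaces are Baire.

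The main obstacle is (c)$\Rightarrow$(a). The crucial additional observation is that $(2^\alpha)^\omega=2^\alpha$ forces $\Lambda'_\alpha(G^\#)$ to be closed under countable intersections, because $G/\bigcap_n N_n$ embeds into $\prod_n G/N_n$. Consequently $P_\alpha G^\#$ is a $P$-topological group at $0$: any countable intersection of open neighborhoods of $0$ remains open. In such a topology, $\{0\}$ is $G_\delta$ if and only if $\{0\}$ is open, which happens iff $|G|\leq 2^\alpha$. The remaining and delicate step is to deduce from the Baireness hypothesis that $\{0\}$ is indeed a $G_\delta$ in $P_\alpha G^\#$; the plan is to argue by contradiction, assuming $|G|>2^\alpha$ and hence (by translation-homogeneity) that no singleton is $G_\delta$ in $P_\alpha G^\#$, and then to construct a countable family of dense open subsets of $P_\alpha G^\#$ whose intersection misses a point. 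The family is built from a strictly descending chain $N_0\supsetneq N_1\supsetneq\cdots$ in $\Lambda'_\alpha(G^\#)$ together with the coset partitions it induces, exploiting the uncountable pseudocharacter at $0$ to keep each stage non-trivial; this yields a violation of the Baire property and completes the proof.
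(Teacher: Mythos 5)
Your treatment of the structural claim and of (a)$\Rightarrow$(b)$\Rightarrow$(c) is correct and essentially the same as the paper's: both identify $\Lambda'_\alpha(G^\#)$ with the family of intersections of at most $\alpha$ kernels of characters of $G$, using that $\T^\alpha$ is divisible with $r_0(\T^\alpha)=r_p(\T^\alpha)=2^\alpha$. The genuine gap is in (c)$\Rightarrow$(a), the only hard implication, where you offer a plan rather than a proof, and the plan provably cannot work. You yourself observe that $\Lambda'_\alpha(G^\#)$ is closed under countable intersections (since $(2^\alpha)^\omega=2^\alpha$), and this observation defeats your construction. Let $N_0\supsetneq N_1\supsetneq\cdots$ be any descending chain in $\Lambda'_\alpha(G^\#)$ and let $U$ be an open set that is a union of cosets of the $N_n$'s --- which is what sets ``built from the chain together with the coset partitions it induces'' are. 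Put $N_\omega=\bigcap_n N_n\in\Lambda'_\alpha(G^\#)$; it is a nonempty open subgroup of $P_\alpha G^\#$. If $U$ is dense, then for every $y\in G$ the open coset $y+N_\omega$ meets $U$, say in a point $u$ with $u+N_n\subseteq U$ for some $n$; then $y\in u+N_\omega\subseteq u+N_n\subseteq U$. Hence every dense open set of this form equals $G$, so no countable family of such sets can witness a failure of the Baire property. The point you overlook is that density in $P_\alpha G^\#$ must be tested against the cosets of \emph{every} member of $\Lambda'_\alpha(G^\#)$, in particular against $N_\omega$; the appeal to ``uncountable pseudocharacter at $0$'' does not circumvent this obstruction.

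What is actually needed --- and what the paper does --- is a countable family of closed sets covering $G$ that are not unions of cosets at all. Since $|G|>2^\alpha$ forces $r(G)>2^\alpha$ (as $|G|\leq\max(r(G),\omega)$ via the divisible hull), one can place $B(G)\leq G\leq D(G)\leq\T^I$ with $|I|=r(G)>2^\alpha$ and stratify $G$ by support size: $A(n)=\{x\in G:|s(x)|\leq n\}$, where $s(x)=\{i\in I:x_i\neq 0\}$. Each $A(n)$ is closed in the topology induced by $\T^I$, hence in $G^\#$ and in $P_\alpha G^\#$, and $G=\bigcup_{n}A(n)$. The heart of the proof is the combinatorial step showing $\mathrm{Int}_{P_\alpha G^\#}\bigl(A(n)\bigr)=\emptyset$: given $N\leq G$ with $|G/N|\leq 2^\alpha$, choose $(2^\alpha)^+$ pairwise disjoint $n$-element sets $I_\xi\subseteq I$ and elements $x_\xi\in B(G)$ with $s(x_\xi)=I_\xi$; by pigeonhole $x_\xi-x_{\xi'}\in N$ for some $\xi\neq\xi'$, and this element has support of size $2n$, so $N\not\subseteq A(n)$ for any $n$; combined with the translation trick ($x+N\subseteq A(n)$ implies $N\subseteq A(2n)$), no coset of $N$ fits inside any $A(n)$. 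Thus $P_\alpha G^\#$ is of first category in itself, contradicting (c). This pigeonhole on supports, relative to an embedding of $G$ into a power of $\T$, is the missing ingredient in your outline; a descending chain of subgroups from $\Lambda'_\alpha(G^\#)$ cannot substitute for it.
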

\begin{proof}
We prove first that $\Lambda''_\alpha(G^\#):=\{\bigcap_{\lambda<\alpha}\ker\chi_\lambda:\chi_\lambda\in\HG\}\subseteq \Lambda_\alpha(G^\#)$ is a local base at $0$ of $P_\alpha G^\#$ and then the equality $\Lambda'_\alpha(G^\#)=\Lambda_\alpha''(G^\#)$.

If $W$ is a $G_\alpha$-set of $G$ such that $0\in W$, then $W\supseteq \bigcap_{\lambda<\alpha}U_\lambda$, where each $U_\lambda$ is a neighborhood of $0$ in $G^\#$ belonging to the base. This means that $U_\lambda=\chi_\lambda^{-1}(V_\lambda)$, where $\chi_\lambda\in\HG$ and $V_\lambda$ is a neighborhood of $0$ in $\T$. Therefore $W\supseteq\bigcap_{\lambda<\alpha}\chi_\lambda^{-1}(0)=\bigcap_{\lambda<\alpha}\ker\chi_\lambda$. Note that each $\chi_\lambda^{-1}(0)$ is a $G_\delta$-set of $G^\#$, since $\{0\}$ is a $G_\delta$-set of $\T$ and hence $\bigcap_{\lambda<\alpha}\chi_\lambda^{-1}(0)$ is a $G_\alpha$-set of $G^\#$. Until now we have proved that $\Lambda_\alpha''(G^\#)$ is a local base at $0$ of $P_\alpha G^\#$. Moreover it is contained in $\Lambda_\alpha(G^\#)$, because each $ \bigcap_{\lambda<\alpha}\ker\chi_\lambda$, where $\chi_\lambda\in\HG$, is a closed $G_\alpha$-subgroup of $G^\#$.

It remains to prove that $\Lambda'_\alpha(G^\#)= \Lambda_\alpha''(G^\#)$. Let $N=\bigcap_{\lambda<\alpha}\ker\chi_\lambda$, where every $\chi_\lambda\in\HG$. Since for every $\lambda <\alpha$ there exists an injective homomorphism $G/\ker\chi_\lambda\to \T$, it follows that there exists an injective homomorphism $G/N\to\T^\alpha$. Then $|G/N|\leq 2^\alpha$. To prove the converse inclusion let $N\in\Lambda_\alpha'(G^\#)$. Then $N$ is closed in $G^\#$, because every subgroup of $G$ is closed in $G^\#$. Moreover, since $r(G/N)\leq 2^\alpha$, there exists an injective homomorphism $i:G/N\to \T^\alpha$; in fact, $G/N$ has an essential subgroup $B(G/N)$ (i.e., $B(G/N)$ non-trivially intersects each non-trivial subgroup of $G/N$) algebraically isomorphic to $\Z^{(r_0(G/N))}\oplus\bigoplus_{p\in\Prm}\Z(p)^{(r_p(G/N))}$. Since $r_0(\T^\alpha)=2^\alpha$ and $r_p(\T^\alpha)=2^\alpha$ for every $p\in \Prm$, there exists an injective homomorphism $B(G/N)\to\T^\alpha$ and by the divisibility of $\T^\alpha$ this homomorphism can be extended to $G/N$ (the extended homomorphism is still injective by the essentiality of $B(G/N)$ in $G/N$). Let $\pi:G\to G/N$ be the canonical projection and let $\pi_\lambda:\T^\alpha\to\T$ be the canonical projection for every $\lambda<\alpha$. Then $\chi_\lambda=\pi_\lambda\restriction_{i(G)}\circ i\circ\pi:G\to \T$ is a homomorphism. Moreover $N=\bigcap_{\lambda<\alpha}\ker\chi_\lambda$.

\medskip
It is clear that (a)$\Rightarrow$(b) by the first part of the proof and that (b)$\Rightarrow$(c).

(c)$\Rightarrow$(a) Suppose for a contradiction that $|G|> 2^\alpha$. We prove that $P_\alpha G^\#$ is of first category.
Let $D(G)=\mathbb{Q}^{(r_0(G))}\oplus\bigoplus_{p\in\Prm}\Z(p^{\infty})^{(r_p(G))}$ be the divisible hull of $G$. Moreover $G$ has a subgroup algebraically isomorphic to $B(G)= \mathbb{Z}^{(r_0(G))}\oplus\bigoplus_{p\in\Prm}\Z(p)^{(r_p(G))}$. We can think $B(G)\leq G\leq D(G)\leq\T^I$, where $|I|=r(G)=r_0(G)+\sup_{p\in\Prm}r_p(G)$, because $\Q$ and $\Z(p^\infty)$ are algebraically isomorphic to subgroups of $\T$ for every $p\in\Prm$. 
Since $|G|>2^\alpha$, it follows that $|I|>2^\alpha$.

For $x\in G$ let $s(x)=\{i\in I:x_i\neq 0\}$ and for $n\in\N$ we set $$A(n)=\{x\in G:|s(x)|\leq n\}.$$  Then $G=\bigcup_{n\in\N}A(n)$.

For every $n\in\N$ we have $A(n)\subseteq A(n+1)$. We prove that $A(n)$ is closed in the topology $\tau$ induced on $G$ by $\T^I$ for every $n\in\N$: it is obvious that $A(0)$ is compact and $A(1)$ is compact, because every open neighborhood of $0$ in $(G,\tau)$ contains all but a finite number of elements of $A(1)$. Moreover, for every $n\in\N$ with $n>1$, $A(n)$ is the sum of $n$ copies of $A(1)$ and so it is compact. 

To conclude the proof we have to prove that each $A(n)$ has empty interior in $P_\alpha G^\#$. Since $A(n)\subseteq A(n+1)$ for every $n\in\N$, it suffices to prove that $\text{Int}_{P_\alpha G^\#}(A(n))$ is empty for sufficiently large $n\in\N$; we consider $n\in\N_+$.
By the first part of the proof, it suffices to show that if $x\in G$ and $N$ is a subgroup of $G$ such that $|G/N|\leq 2^\alpha$, then $x\in x+N\not\subseteq A(n)$ for all $n\in\N_+$. Moreover we can suppose that $x=0$. In fact, if there exist $x\in G$ and $N\leq G$ with $|G/N|\leq 2^\alpha$, such that $x+N \subseteq A(n)$, then $x=x+0\in A(n)$ and $0\in N\subseteq -x+A(n)\subseteq A(n)+A(n)\subseteq A(2n).$
So let $N\leq G$ be such that $|G/N|\leq 2^\alpha$. Let $\{I_{\xi}:\xi<(2^\alpha)^+\}$ be a family of subsets of $I$ such that $|I_{\xi}|=n\ \text{and}\ I_{\xi}\cap I_{\xi'}=\emptyset$ for every $\xi<\xi'<(2^\alpha)^+$.
For every $\xi< (2^\alpha)^+$ there exists $x_\xi \in B(G)\leq G$ such that $s(x_\xi)=I_{\xi}$. Let $\pi:G\to G/N$ be the canonical projection. Since $|\{x_\xi:\xi<(2^\alpha)^+\}|=(2^\alpha)^+>2^\alpha\geq|G/N|$, it follows that there exist $\xi<\xi'<(2^\alpha)^+$ such that $\pi(x_\xi)=\pi(x_{\xi'})$. Then $x_\xi-x_{\xi'}\in\ker \pi=N$. But $s(x_\xi-x_{\xi'})=I_{\xi}\cup I_{\xi'}$ and $|I_{\xi}\cup I_{\xi'}|=2n$. Hence $x_\xi -x_{\xi'}\not\in A(n)$.
\end{proof}

\section{Construction of $G_\alpha$-dense subgroups}

The following lemma is a generalization to $\alpha$-pseudocompact abelian groups of \cite[Lemma 2.13]{CvM}. The construction is the same.

\begin{lemma}\label{cconn0}
Let $\alpha$ be an infinite cardinal. Let $G$ be an $\alpha$-pseudocompact abelian group and $G=\bigcup_{n\in\N}A_n$, where all $A_n$ are subgroups of $G$. Then there exist $n\in\N$ and $N\in\Lambda_\alpha(G)$ such that $A_n\cap N$ is $G_\alpha$-dense in $N$.
\end{lemma}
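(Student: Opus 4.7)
The plan is to prove this by a Baire category argument in the $P_\alpha$-topology, exploiting the subgroup structure of the $A_n$'s.

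First, I would invoke Theorem \ref{baire1} to get that $P_\alpha G$ is a Baire space. Since $G=\bigcup_{n\in\N}A_n$, at least one of the $A_n$ fails to be nowhere dense in $P_\alpha G$; fix such an $n$, so that $\mathrm{Int}_{P_\alpha G}\bigl(\overline{A_n}^{P_\alpha G}\bigr)\neq\emptyset$. By Corollary \ref{lambda2} (applied to the precompact group $G$), this non-empty $P_\alpha$-open set contains a coset $a+N$ for some $a\in G$ and some $N\in\Lambda_\alpha(G)$.

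Next I would upgrade ``$a+N\subseteq\overline{A_n}^{P_\alpha G}$'' to ``$N\subseteq\overline{A_n}^{P_\alpha G}$''. Because $P_\alpha G$ is a topological group (the $P_\alpha$-topology is compatible with the group operations) and $A_n$ is a subgroup, the closure $\overline{A_n}^{P_\alpha G}$ is also a subgroup. In particular $a\in\overline{A_n}^{P_\alpha G}$, hence $-a\in\overline{A_n}^{P_\alpha G}$, so $N=-a+(a+N)\subseteq\overline{A_n}^{P_\alpha G}$.

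Finally I would verify that $A_n\cap N$ is $G_\alpha$-dense in $N$ by contradiction. Note first that $N$ is itself $\alpha$-pseudocompact by Corollary \ref{lambda8}(b), so the notion of $G_\alpha$-density inside $N$ is well behaved. If $A_n\cap N$ were not $G_\alpha$-dense in $N$, then by Corollary \ref{lambda2} applied to $N$ there would exist $x\in N$ and $M\in\Lambda_\alpha(N)$ with $(x+M)\cap(A_n\cap N)=\emptyset$. Since $x\in N$ and $M\subseteq N$, we have $x+M\subseteq N$, so in fact $(x+M)\cap A_n=\emptyset$. But by Corollary \ref{lambda8}(a), $M\in\Lambda_\alpha(G)$, so $x+M$ is a $P_\alpha$-open neighborhood of $x$ in $G$; and since $x\in N\subseteq\overline{A_n}^{P_\alpha G}$, every such neighborhood must meet $A_n$ — a contradiction. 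This finishes the proof.

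The potentially delicate step is step four — the promotion from $a+N\subseteq\overline{A_n}^{P_\alpha G}$ to $N\subseteq\overline{A_n}^{P_\alpha G}$ — since it relies on the $P_\alpha$-topology being a group topology (so that the closure of a subgroup remains a subgroup). Once this is in hand the rest is essentially forced: the Baire property provides the index $n$ and the witness $N$, and the $G_\alpha$-density of $A_n\cap N$ in $N$ is an immediate consequence of $N$ sitting inside the $P_\alpha$-closure of $A_n$ together with the inheritance property $\Lambda_\alpha(N)\subseteq\Lambda_\alpha(G)$ from Corollary \ref{lambda8}(a).
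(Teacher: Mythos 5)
Your proposal is correct, and its skeleton --- Baire category in $P_\alpha G$ via Theorem \ref{baire1}, then the coset base $\{x+N : x\in G,\ N\in\Lambda_\alpha(G)\}$ from Corollary \ref{lambda2} --- is exactly the paper's. Where you diverge is in how the coset $a+N\subseteq\overline{A_n}^{P_\alpha G}$ gets converted into a statement about the subgroup $N$ itself. The paper stays more elementary: since $A_n\cap(a+N)$ is $G_\alpha$-dense in the $P_\alpha$-clopen set $a+N$, it is in particular non-empty, so the coset representative may be chosen inside $A_n$; translating by $-a$ then carries $A_n\cap(a+N)$ onto $A_n\cap N$ (using $A_n-a=A_n$, which is where the subgroup hypothesis enters), and translations are $P_\alpha$-homeomorphisms, so $G_\alpha$-density is preserved. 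You instead invoke that $P_\alpha G$ is a topological group, so that $\overline{A_n}^{P_\alpha G}$ is a subgroup containing $N$. That is the one ingredient you use which the paper neither states nor proves; it is true --- given a $G_\alpha$-neighborhood $W=\bigcap_{\lambda<\alpha}W_\lambda$ of $x-y$, choose for each $\lambda$ open sets $U_\lambda\ni x$, $V_\lambda\ni y$ with $U_\lambda-V_\lambda\subseteq W_\lambda$ and intersect; since $\alpha$ is infinite, $U=\bigcap_\lambda U_\lambda$ and $V=\bigcap_\lambda V_\lambda$ are again $G_\alpha$-sets and $U-V\subseteq W$ --- but it deserves this one-line justification, as the paper's argument only ever needs the weaker fact that translations remain homeomorphisms in the $P_\alpha$-topology. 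Your closing verification is also correct (Corollary \ref{lambda8}(a) gives $\Lambda_\alpha(N)\subseteq\Lambda_\alpha(G)$, so no basic $P_\alpha$-neighborhood inside $N$ can avoid $A_n$), though it can be streamlined: since $N$ is $P_\alpha$-open in $G$, the inclusion $N\subseteq\overline{A_n}^{P_\alpha G}$ already yields $N\subseteq\overline{A_n\cap N}^{P_\alpha G}$, which is the desired $G_\alpha$-density. In short: same route, with one step traded for a true but unproved-in-the-paper group-topology fact, and a slightly longer finish; the paper's version buys independence from that fact, yours buys a cleaner conceptual statement (the $P_\alpha$-closure of a subgroup is a subgroup).
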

\begin{proof}
Since $(G,P_\alpha\tau)$ is Baire by Theorem \ref{baire1} and since $G=\bigcup_{n\in\N}\overline{A_n}^{P_\alpha\tau}$, there exists $n\in\N$ such that $\text{Int}_{P_\alpha\tau}\overline{A_n}^{P_\alpha\tau}\neq \emptyset$. The family $\{x+N:x\in G,\ N\in\Lambda_\alpha(G)\}$ is a base of $P_\alpha\tau$ by Corollary \ref{lambda2}; consequently there exist $x\in G$ and $N\in \Lambda_\alpha(G)$ such that $x+N\subseteq \overline{A_n}^{P_\alpha\tau}$.
Since $x+N$ is open and closed in $P_\alpha\tau$, then $\overline{A_n\cap(x+N)}^{P_\alpha\tau}=x+N$, i.e., $A_n\cap(x+N)$ is $G_\alpha$-dense in $x+N$.

We can suppose without loss of generality that $x\in A_n$, because we can choose $a\in A_n$ such that $a+N=x+N$. In fact, since $A_n\cap (x+N)\neq \emptyset$, because $A_n\cap (x+N)$ is $G_\alpha$-dense in $x+N$, it follows that there exists $a\in (x+N)\cap A_n$. In particular $a\in x+N$ and so $a+N=x+N$.

We can choose $x=0$ because all $A_n\leq G$: since $A_n\cap(x+N)$ is $G_\alpha$-dense in $x+N$, it follows that $(A_n-x)\cap N$ is $G_\alpha$-dense in $N$ and $A_n-x=A_n$ since $x\in A_n$.
\end{proof}

For $\alpha=\omega$ the next lemma is \cite[Lemma 4.1(b)]{CGvM}.

\begin{lemma}\label{lambda}
Let $\alpha$ be an infinite cardinal and let $G$ be an $\alpha$-pseu\-do\-com\-pact abelian group. If $N\in \Lambda_\alpha (G)$ and $D$ is $G_\alpha$-dense in $N$, then there exists a subgroup $E$ of $G$ such that $|E|\leq 2^\alpha$ and $D+E$ is $G_\alpha$-dense in $G$.
\end{lemma}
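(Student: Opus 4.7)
The plan is to take $E$ to be the subgroup of $G$ generated by a complete set of representatives of the cosets of $N$ in $G$. The cardinality bound will follow from the fact that $G/N$ is compact of weight at most $\alpha$, and the $G_\alpha$-density of $D+E$ will follow from a short two-stage decomposition, applying Lemma \ref{gdeltad1}(a) once inside $N$ and once in $G$.

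In more detail, I would first observe that $N\in\Lambda_\alpha(G)$ together with Theorem \ref{lambda4} yields $w(G/N)\leq\alpha$, so that $G/N$ is compact and hence satisfies $|G/N|\leq 2^{w(G/N)}\leq 2^\alpha$ by Fact \ref{dens1}(a). Pick a transversal $T\subseteq G$ of $N$, so that $T+N=G$ and $|T|\leq 2^\alpha$, and set $E=\langle T\rangle$. Then $|E|\leq 2^\alpha$ (recall $\alpha\geq\omega$) and $E+N=G$.

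Next, to verify that $D+E$ is $G_\alpha$-dense in $G$, by Lemma \ref{gdeltad1}(a) it suffices to prove that $(D+E)+M=G$ for every $M\in\Lambda_\alpha(G)$. Fix such an $M$. Since $M$ is closed and $G_\alpha$ in $G$, $M\cap N$ is closed and $G_\alpha$ in $N$, hence $M\cap N\in\Lambda_\alpha(N)$. By Corollary \ref{lambda8}(b), $N$ is itself $\alpha$-pseudocompact, and because $D$ is $G_\alpha$-dense in $N$, Lemma \ref{gdeltad1}(a) applied inside $N$ gives $D+(M\cap N)=N$. Given $g\in G$, write $g=e+n$ with $e\in E$ and $n\in N$, and then $n=d+m$ with $d\in D$ and $m\in M\cap N\subseteq M$; thus $g=e+d+m\in E+D+M$. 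So $(D+E)+M=G$ as required, and another invocation of Lemma \ref{gdeltad1}(a) concludes that $D+E$ is $G_\alpha$-dense in $G$.

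I do not anticipate a serious obstacle: the argument is purely a bookkeeping assembly of results that are already in place. The two points worth care are (i) the hereditary nature of membership in $\Lambda_\alpha$, specifically that $M\cap N\in\Lambda_\alpha(N)$ whenever $M\in\Lambda_\alpha(G)$, and (ii) the use of Corollary \ref{lambda8}(b) to recover $\alpha$-pseudocompactness of $N$ so that Lemma \ref{gdeltad1}(a) applies inside $N$. The abelianness of $G$ enters only so that the sumset manipulations $D+(M\cap N)=N$ and $(D+E)+M=G$ are literally well-defined subgroup sums.
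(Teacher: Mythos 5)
Your proof is correct, and the construction is the one the paper uses: a transversal $X$ of $N$ in $G$ with $|X|\leq 2^\alpha$ (via Theorem \ref{lambda4} and compactness of $G/N$), and $E=\langle X\rangle$. Where you diverge is in verifying $G_\alpha$-density of $D+E$. The paper argues topologically and very briefly: since $D$ is $G_\alpha$-dense in $N$, each translate $x+D$ is $G_\alpha$-dense in $x+N$, and since the cosets $x+N$ with $x\in X$ cover $G$ and each is itself a $G_\alpha$-set of $G$, every non-empty $G_\alpha$-set of $G$ meets some $x+D\subseteq D+E$. You instead run everything through the algebraic criterion of Lemma \ref{gdeltad1}(a), applied twice: once inside $N$ (which requires Corollary \ref{lambda8}(b) to know $N$ is $\alpha$-pseudocompact, plus the easy observation that $M\cap N\in\Lambda_\alpha(N)$ for $M\in\Lambda_\alpha(G)$), and once in $G$ after the two-stage decomposition $g=e+n=e+d+m$. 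Both verifications are valid, and yours is a clean assembly of machinery the paper has already established. The one thing the paper's translation argument buys that yours does not: it works verbatim when $D$ is merely a $G_\alpha$-dense \emph{subset} of $N$, whereas Lemma \ref{gdeltad1}(a) is stated for subgroups, so your route needs $D$ (and hence $D+E$) to be a subgroup. The statement of the lemma does not literally say $D$ is a subgroup; this is harmless for every application in the paper (there $D=A_n\cap N$ with $A_n\leq G$, so it is a subgroup), but if one wanted the subset version, your argument would need the small extra remark that the proof of Lemma \ref{gdeltad1}(a), via Corollary \ref{lambda2}, never actually uses the subgroup hypothesis on $D$.
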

\begin{proof}
Since $D$ is $G_\alpha$-dense in $N\in\Lambda_\alpha(G)$, it follows that $x+D$ is $G_\alpha$-dense in $x+N$ for every $x\in G$. By Theorem \ref{lambda4} $G/N$ is compact of weight $\alpha$ and so $|G/N|\leq 2^\alpha$, i.e., there exists $X\subseteq G$ with $|X|\leq 2^\alpha$ such that $G/N=\{x+N:x\in X\}$. We set $E=\langle X \rangle$; then $|E|\leq 2^\alpha$ and $D+E$ is $G_\alpha$-dense in $G$.
\end{proof}

Lemmas \ref{cconn0} and \ref{lambda} imply that in case $G$ is an $\alpha$-pseudocompact abelian group such that $G=\bigcup_{n\in\N}A_n$, where all $A_n$ are subgroups of $G$, then there exist $n\in\N$, $N\in\Lambda_\alpha(G)$ and $E\leq G$ with $|E|\leq 2^\alpha$ such that $(A_n\cap N)+E$ is $G_\alpha$-dense in $G$. In particular we have the following useful result.

\begin{corollary}\label{lambda+}
Let $\alpha$ be an infinite cardinal. Let $G$ be an $\alpha$-pseudocompact abelian group such that $G=\bigcup_{n\in\N}A_n$, where all $A_n\leq G$. Then there exist $n\in\N$ and a subgroup $E$ of $G$ such that $|E|\leq 2^\alpha$ and $A_n+E$ is $G_\alpha$-dense in $G$.
\end{corollary}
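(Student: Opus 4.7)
The plan is simply to chain Lemma \ref{cconn0} with Lemma \ref{lambda}, as the paragraph preceding the corollary already suggests. First, I would apply Lemma \ref{cconn0} to the given decomposition $G=\bigcup_{n\in\N}A_n$: this yields an index $n\in\N$ and some $N\in\Lambda_\alpha(G)$ such that $A_n\cap N$ is $G_\alpha$-dense in $N$. This gives a candidate subgroup of $N$ which is ``large'' inside $N$ in the $G_\alpha$-topology sense.

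Next, I would feed $D:=A_n\cap N$ into Lemma \ref{lambda}. Since $D$ is $G_\alpha$-dense in $N\in\Lambda_\alpha(G)$, the lemma supplies a subgroup $E\leq G$ with $|E|\leq 2^\alpha$ such that $D+E=(A_n\cap N)+E$ is $G_\alpha$-dense in $G$. Finally, from the obvious inclusion
\[
(A_n\cap N)+E \subseteq A_n+E,
\]
$G_\alpha$-density of the smaller set in $G$ forces $G_\alpha$-density of $A_n+E$ in $G$, which is the required conclusion.

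There is no real obstacle here; the nontrivial content is already packaged in the two earlier lemmas (Baire-ness of $P_\alpha G$ together with the fact that $|G/N|\leq 2^\alpha$ for $N\in\Lambda_\alpha(G)$, by Theorem \ref{lambda4} and Fact \ref{dens1}(a)). The only thing worth double-checking is that the chosen $E$ has cardinality at most $2^\alpha$ and not merely $\leq 2^\alpha\cdot|A_n\cap N|$: this is guaranteed by Lemma \ref{lambda}, whose bound depends only on $|G/N|$ and thus on $\alpha$.
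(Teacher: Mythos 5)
Your proof is correct and coincides with the paper's own argument: the paper derives the corollary from exactly this chain, applying Lemma \ref{cconn0} to get $n$ and $N\in\Lambda_\alpha(G)$ with $A_n\cap N$ being $G_\alpha$-dense in $N$, then Lemma \ref{lambda} with $D=A_n\cap N$ to obtain $E$ with $|E|\leq 2^\alpha$ and $(A_n\cap N)+E$ being $G_\alpha$-dense in $G$, whence $A_n+E\supseteq (A_n\cap N)+E$ is $G_\alpha$-dense as well. Your cardinality check on $E$ is also right: the bound in Lemma \ref{lambda} comes only from $|G/N|\leq 2^\alpha$.
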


Thanks to the previous results we prove the following theorem, which gives a first restriction for extremal $\alpha$-pseudocompact groups, i.e., the free rank cannot be too big. The case $\alpha=\omega$ of this theorem is \cite[Theorem 3.6]{DGM}. That theorem in its own terms was inspired by \cite[Theorem 5.10 (b)]{CGal2} and used ideas from the proof of \cite[Proposition 4.4]{CGvM}.

\begin{theorem}\label{G>c2}
Let $\alpha$ be an infinite cardinal and let $G$ be an $\alpha$-pseu\-do\-com\-pact abelian group. If $G$ is $c_\alpha$-extremal, then $r_0(G)\leq 2^\alpha$.
\end{theorem}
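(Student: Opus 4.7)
The plan is to argue the contrapositive: assume $r_0(G)>2^\alpha$ and construct a $G_\alpha$-dense subgroup $D$ of $G$ with $r_0(G/D)\geq 2^\alpha$; by Corollary~\ref{alpha-psc<->G_alpha-dense}, any such $D$ is dense and $\alpha$-pseudocompact, witnessing the failure of $c_\alpha$-extremality. I would fix a maximal independent subset $X=\{x_i:i\in I\}$ of $G$ consisting of elements of infinite order, so that $|I|=r_0(G)>2^\alpha$. The target is a splitting $X=X_1\sqcup X_2$ with $|X_2|>2^\alpha$ together with a $G_\alpha$-dense $D$ satisfying $\langle X_1\rangle+t(G)\subseteq D$ and $D\cap\langle X_2\rangle=\{0\}$; these two conditions force $D\cap\langle X\rangle=\langle X_1\rangle$, so that the images of the elements of $X_2$ freely generate a subgroup of $G/D$ of rank $|X_2|>2^\alpha$, and hence $r_0(G/D)\geq|X_2|>2^\alpha$.

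The lever for $G_\alpha$-density is Corollary~\ref{lambda+}. Maximality of $X\cup t(G)$ makes $G/(\langle X\rangle+t(G))$ torsion, so the subgroups $A_n:=\{g\in G:ng\in\langle X\rangle+t(G)\}$ cover $G$. Corollary~\ref{lambda+} delivers $n_0\in\N_+$ and $E\leq G$ with $|E|\leq 2^\alpha$ such that $A_{n_0}+E$ is $G_\alpha$-dense in $G$. Every $e\in E$ has a well-defined finite $X$-support modulo $t(G)$, via the equation $n_e e=\sum c_i x_i+t_e$ available for some $n_e\in\N_+$; the union $J\subseteq I$ of these supports therefore satisfies $|J|\leq 2^\alpha$. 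Choose $X_2\subseteq\{x_i:i\in I\setminus J\}$ with $|X_2|>2^\alpha$ and set $X_1:=X\setminus X_2$, so that every element of $E$ is ``$X_2$-free''.

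The remaining obstacle---and the anticipated technical heart of the proof---is to extend $\langle X_1\rangle+t(G)+E$ to a $G_\alpha$-dense subgroup $D$ of $A_{n_0}+E$ whose elements are all $X_2$-free. By Lemma~\ref{gdeltad1}(a), $G_\alpha$-density of $D$ amounts to $D+N=G$ for every $N\in\Lambda_\alpha(G)$, equivalently to $D$ meeting every coset of $(A_{n_0}+E)\cap N$ in $A_{n_0}+E$; since $|G/N|\leq 2^\alpha$ by Theorem~\ref{lambda4}, there are at most $2^\alpha$ such cosets for each $N$. Following the pseudocompact template of \cite[Theorem 3.6]{DGM}, I would select coset representatives $a_{x,N}\in A_{n_0}$ and adjust each by a suitable $\Z$-combination of elements of $X_2$ lying in $N$ in order to render the representative $X_2$-free; the resulting subgroup $D$ is $G_\alpha$-dense, every element of $D$ is $X_2$-free, and hence $D\cap\langle X_2\rangle=\{0\}$, which yields the desired contradiction with $c_\alpha$-extremality. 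The quantitative strict inequality $|G/N|\leq 2^\alpha<|X_2|$ is what makes these $X_2$-adjustments available: it guarantees that within the oversized $X_2$ many differences $x_i-x_{i'}$ collapse modulo $N$, providing the needed $N$-valued shifts without touching the independence structure that is to be preserved in $G/D$.
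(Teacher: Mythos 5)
Your setup (maximal independent set, a countable covering by subgroups $A_n$, Corollary \ref{lambda+}) starts along the same lines as the paper, but the step you yourself flag as the ``technical heart'' is a genuine gap, and the repair you sketch cannot close it. The root of the problem is your choice of covering: since $A_n=\{g\in G: ng\in\langle X\rangle+t(G)\}$ contains all of $\langle X\rangle+t(G)$, every $A_n$ contains $\langle X_2\rangle$ no matter how $X_2$ is chosen afterwards; in particular $G/A_{n_0}$ is torsion and the $G_\alpha$-dense subgroup $A_{n_0}+E$ delivered by Corollary \ref{lambda+} already contains $\langle X_2\rangle$. So the Baire-category certificate of $G_\alpha$-density attaches to a subgroup that is useless for your purpose. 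What you actually need is that the group $H=\{g\in G:\exists n\in\N_+,\ ng\in\langle X_1\rangle+t(G)\}$ of $X_2$-free elements (the largest possible choice of your $D$, and one containing $\langle X_1\rangle+t(G)+E$) is $G_\alpha$-dense; this does not follow from the density of $A_{n_0}+E$, and it is essentially the whole difficulty of the theorem, so nothing in your argument establishes it.

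The adjustment mechanism you propose fails for a concrete algebraic reason. The shifts that pigeonhole provides are $\Z$-combinations of differences $x_i-x_{i'}\in N$ with $x_i,x_{i'}\in X_2$; every such shift $\nu$ lies in $\langle X_2\rangle$ and has coefficient sum zero with respect to the basis $X_2$. Writing $n_0a=P_a+Q_a+t_a$ with $P_a\in\langle X_2\rangle$, $Q_a\in\langle X_1\rangle$, $t_a\in t(G)$, one checks (using that $\langle X\rangle$ is free, so $\langle X_2\rangle\cap(\langle X_1\rangle+t(G))=0$) that $a-\nu$ is $X_2$-free only if $P_a=n_0\nu$, which forces the coefficient sum of $P_a$ to vanish. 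Hence no such shift can fix, for instance, the representative $a=x_j$ with $x_j\in X_2$, which does lie in $A_{n_0}$ and has coefficient sum $n_0\neq 0$; whether the coset $x_j+N$ contains any $X_2$-free element at all is exactly the unproved density of $H$. Compare with the paper's proof: there the partition $M=\bigcup_{n}M_n$ into countably many pieces of full size $\kappa=r_0(G)$ is made \emph{before} the Baire argument, and the covering subgroups $A_n=\{x:n!x\in V_n\}$, $V_n=\langle M_1\cup\dots\cup M_n\rangle$, have the support restriction built in, so that $M_{n+1}$ remains independent modulo $A_n$ and $r_0(G/A_n)\geq\kappa$. Corollary \ref{lambda+} then certifies that $D=A_n+E$ is $G_\alpha$-dense, and the conclusion $\kappa\leq 2^\alpha$ follows from cardinal arithmetic alone ($|D/A_n|\leq 2^\alpha$ and $r_0(G/D)<2^\alpha$), with no adjustment of coset representatives. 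The moral is that the restriction on supports must be encoded in the covering to which the Baire argument is applied; it cannot be imposed after the fact.
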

\begin{proof}
Let $\kappa=r_0(G)$ and let $M$ be a maximal independent subset of $G$ consisting of non-torsion elements. Then $|M|=\kappa$ and there exists a partition $M=\bigcup_{n\in\N_+} M_n$ such that $|M_n|=\kappa$ for each $n\in\N_+$. Let $U_n=\langle M_n\rangle$, $V_n=U_1\oplus\ldots\oplus U_n$ and $A_n=\{x\in G:n!x\in V_n\}$ for every $n\in \N_+$. Then $G=\bigcup_{n\in\N_+}A_n$. By Corollary \ref{lambda+} there exist $n\in \N_+$ and a subgroup $E$ of $G$ such that $D=A_n+E$ is $G_\alpha$-dense in $G$ and $|E|\leq 2^\alpha$. Hence $|E/(A_n\cap E)|\leq 2^\alpha$. Since $D/A_n=(A_n+E)/A_n$ is algebraically isomorphic to $E/(A_n\cap E),$ it follows that $|D/A_n|\leq 2^\alpha$. Since $G$ is $c_\alpha$-extremal, it follows that $r_0(G/D)<2^\alpha$ and so $r_0(G/A_n)\leq 2^\alpha$ because $(G/A_n)/(D/A_n)$ is algebraically isomorphic to $G/D$. On the other hand, $r_0(G/A_n)\geq\kappa$, as $U_{n}$ embeds into $G/A_n$. Hence $\kappa\leq 2^\alpha$.
\end{proof}

\section{The dense graph theorem}\label{dense-graph-sec}

For $\alpha=\omega$ the following lemma is \cite[Lemma 3.7]{DGM}. The idea of this lemma comes from the proof of \cite[Theorem 4.1]{CRob}.

\begin{lemma}\label{str0'}
Let $\alpha$ be an infinite cardinal. Let $G$ be a topological abelian group and $H$ a compact abelian group with $|H|>1$ and $w(H)\leq\alpha$. Let $h: G \rightarrow H$ be a surjective homomorphism. Then $\Gamma_h$ is $G_\alpha$-dense in $G\times H$ if and only if $\ker h$ is $G_\alpha$-dense in $G$. 
\end{lemma}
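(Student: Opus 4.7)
The plan is to reduce both directions to the definitions, using two elementary facts about $G_\alpha$-sets. First, since $w(H)\leq\alpha$, Fact \ref{dens1}(b) gives $\psi(H)\leq\alpha$, so every singleton $\{y\}$ is a $G_\alpha$-set of $H$. Second, if $A$ is a $G_\alpha$-set of $G$ and $B$ is a $G_\alpha$-set of $H$, then $A\times B=(A\times H)\cap(G\times B)$ is a $G_\alpha$-set of $G\times H$, being the intersection of preimages of $G_\alpha$-sets under the continuous projections.

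For the direction ($\Rightarrow$), I would take a non-empty $G_\alpha$-set $W\subseteq G$ and consider $W\times\{0_H\}$, which by the two observations above is a non-empty $G_\alpha$-set of $G\times H$. By the $G_\alpha$-density of $\Gamma_h$ there is some $(x,h(x))\in W\times\{0_H\}$; then $h(x)=0_H$ and $x\in W\cap\ker h$, so $\ker h$ is $G_\alpha$-dense in $G$. This direction does not even require surjectivity of $h$.

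For ($\Leftarrow$), let $W$ be a non-empty $G_\alpha$-set of $G\times H$ and pick $(x_0,y_0)\in W$. Using surjectivity of $h$, choose $a\in G$ with $h(a)=y_0$. Translating by $(a,y_0)$ (a homeomorphism of the topological group $G\times H$) reduces the problem to finding a point of $\Gamma_h$ in $W':=W-(a,y_0)$, which contains $(0,0)$; indeed, since $h$ is a homomorphism, $(z,h(z))\in W'$ is equivalent to $(z+a,h(z+a))\in W$. Now write $W'=\bigcap_{i\in I}W_i$ with each $W_i$ open and $|I|\leq\alpha$, and for each $i$ pick basic open sets $U_i\subseteq G$, $V_i\subseteq H$ with $(0,0)\in U_i\times V_i\subseteq W_i$. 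Setting $U=\bigcap_i U_i$ and $V=\bigcap_i V_i$, we obtain $U\times V=\bigcap_i (U_i\times V_i)\subseteq W'$, with $U$ a $G_\alpha$-set of $G$ containing $0$ and $V$ a $G_\alpha$-set of $H$ containing $0$. By the $G_\alpha$-density of $\ker h$ in $G$ there exists $x\in U\cap\ker h$; then $(x,h(x))=(x,0)\in U\times V\subseteq W'$, and translating back yields a point of $\Gamma_h\cap W$.

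The only mildly non-trivial step is the decomposition of an arbitrary $G_\alpha$-set of $G\times H$ as a superset of a product of $G_\alpha$-sets in $G$ and $H$, which is where the hypothesis $w(H)\leq\alpha$ gets leveraged (via $\psi(H)\leq\alpha$ and the base for $G\times H$). This is immediate from the definition of the product topology, so I expect no real obstacle; the hypothesis $|H|>1$ only serves to exclude the degenerate case.
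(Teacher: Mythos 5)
Your overall strategy is the same as the paper's: use $\psi(H)\leq w(H)\leq\alpha$ to see that singletons of $H$ are $G_\alpha$-sets, shrink a $G_\alpha$-set of $G\times H$ to a product whose second factor is (essentially) a point, and then combine surjectivity of $h$ with a translation argument and the $G_\alpha$-density of $\ker h$. Your forward direction is correct and matches the paper's (and you are right that it uses neither surjectivity nor $|H|>1$). However, the backward direction contains a false step: the claim that $W':=W-(a,y_0)$ contains $(0,0)$. You chose $a$ only so that $h(a)=y_0$; since $(x_0,y_0)\in W$, what $W'$ is guaranteed to contain is the point $(x_0-a,0)$, whereas $(0,0)\in W'$ would mean $(a,y_0)\in W$, which nothing guarantees. (You cannot instead translate by $(x_0,y_0)$, because that translation is not compatible with the graph unless $h(x_0)=y_0$.) As written, the step ``pick basic open sets $U_i\times V_i$ with $(0,0)\in U_i\times V_i\subseteq W_i$'' therefore cannot be carried out.

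The gap is a slip rather than a missing idea, because the repair is one line: center the basic open sets at $(x_0-a,0)$ instead of $(0,0)$. Then $U=\bigcap_i U_i$ is a \emph{non-empty} $G_\alpha$-set of $G$ (it contains $x_0-a$) and $V=\bigcap_i V_i$ is a $G_\alpha$-set of $H$ containing $0$, and your remaining argument uses exactly these two facts and nothing else: $G_\alpha$-density of $\ker h$ gives $x\in U\cap\ker h$, then $(x,0)\in U\times V\subseteq W'$, and translating back gives $(x+a,y_0)=(x+a,h(x+a))\in\Gamma_h\cap W$. With this correction your proof coincides in substance with the paper's: the paper asserts that every non-empty $G_\alpha$-set of $G\times H$ contains one of the form $W_0\times\{y\}$ with $W_0$ a non-empty $G_\alpha$-set of $G$ (this is precisely what your $U\times V$ computation proves, after intersecting the second factor with the $G_\alpha$-set $\{y\}$), and then, instead of translating $W$, it translates the kernel, using that the coset $z+\ker h$ is $G_\alpha$-dense for $z\in h^{-1}(y)$; the two translations are interchangeable.
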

\begin{proof}
Suppose that $\Gamma_h$ is $G_\alpha$-dense in $G\times H$. Let $W$ be a non-empty $G_\alpha$-set of $G$. Since $W\times\{0\}$ is a $G_\alpha$-set of $G\times H$, then $\Gamma_h\cap(W\times\{0\})\neq\emptyset$. But $\Gamma_h\cap(W\times\{0\})=(W\cap\ker h)\times\{0\}$ and so $W\cap\ker h\neq\emptyset$. This proves that $\ker h$ is $G_\alpha$-dense in $G$. 

Suppose that $\ker h$ is $G_\alpha$-dense in $G$. Every non-empty $G_\alpha$-set of $G\times H$ contains a $G_\alpha$-set of $G\times H$ of the form $W\times\{y\}$, where $W$ is a non-empty $G_\alpha$-set of $G$ and $y\in H$. Since $h$ is surjective, there exists $z\in G$ such that $h(z)=y$. Also $z+\ker h$ is $G_\alpha$-dense in $G$ and so $W\cap(z+\ker h)\neq\emptyset$. Consequently there exists $x\in W\cap(z+\ker h)$. From $x\in z+\ker h$ it follows that $x-z\in\ker h$. Therefore $h(x-z)=0$ and so $h(x)=h(z)=y$. Since $x\in W$, it follows that $(x,y)\in (W\times\{y\})\cap\Gamma_h$. This proves that $(W\times\{y\})\cap\Gamma_h\neq\emptyset$. Hence $\Gamma_h$ is $G_\alpha$-dense in $G\times H$.
\end{proof}

In this lemma we conclude that $\ker h$ is proper in $G$, from the hypotheses that $h$ is surjective and $H$ is not trivial.

\medskip
The next remark explains the role of the graph of a homomorphism in relation to the topology of the domain (see \cite[Remark 2.12]{DGM} for more details).

\begin{remark}\label{ossgraf3}
Let $(G,\tau)$ and $H$ be topological groups and $h:(G,\tau)\rightarrow H$ a homomorphism. Consider the map $j:G\rightarrow\Gamma_h$ such that $j(x)=(x,h(x))$ for every $x\in G$. Then $j$ is an open isomorphism. Endow $\Gamma_h$ with the group topology induced by the product $(G,\tau)\times H$. The topology $\tau_h$ is {\it the weakest group topology on $G$ such that $\tau_h\geq\tau$ and for which $j$ is continuous}. Then $j:(G,\tau_h)\rightarrow\Gamma_h$ is a homeomorphism. Moreover $\tau_h$ is the weakest group topology on $G$ such that $\tau_h\geq\tau$ and for which $h$ is continuous. Clearly $h$ is $\tau$-continuous if and only if $\tau_h=\tau$.
\end{remark}

The following theorem gives a necessary condition for an $\alpha$-pseu\-do\-com\-pact group to be either $s_\alpha$- or $r_\alpha$-extremal. We call it ``dense graph theorem'' because of the nature of this necessary condition. Moreover it is the generalization to $\alpha$-pseudocompact groups of \cite[Theorem 4.1]{CRob}.

\begin{theorem}\label{dense-graph-alpha}
Let $\alpha$ be an infinite cardinal and let $(G,\tau)$ be an $\alpha$-pseu\-do\-com\-pact group such that there exists a homomorphism $h:G\to H$ where $H$ is an $\alpha$-pseudocompact abelian group with $|H|>1$ and $\Gamma_h$ is $G_\alpha$-dense in $(G,\tau)\times H$. Then:
\begin{itemize}
	\item[(a)]there exists an $\alpha$-pseudocompact group topology $\tau'>\tau$ on $G$ such that $w(G,\tau')=w(G,\tau)$;
	\item[(b)]there exists a proper $G_\alpha$-dense subgroup $D$ of $(G,\tau)$ such that $w(D)=w(G,\tau)$.
\end{itemize}
\end{theorem}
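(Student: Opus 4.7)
My plan is to reduce the hypothesis to the case of a compact target of weight $\leq\alpha$, so that Lemma~\ref{str0'} applies directly. Since $|H|>1$, the compact abelian group $\widetilde H$ admits a non-trivial continuous character $\chi$, and $N:=\chi^{-1}(0)\cap H$ is a proper closed $G_\delta$-subgroup of $H$, hence $N\in\Lambda_\omega(H)\subseteq\Lambda_\alpha(H)$. Let $\pi\colon H\to H/N$ be the canonical projection; by Theorem~\ref{lambda4} the quotient $H/N$ is compact of weight $\leq\alpha$ and $|H/N|>1$. Set $h':=\pi\circ h\colon G\to H/N$.

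Two preliminary facts about $h'$ drive everything. First, $\Gamma_{h'}$ is $G_\alpha$-dense in $(G,\tau)\times(H/N)$: the continuous surjection $\mathrm{id}_G\times\pi\colon G\times H\to G\times(H/N)$ sends $\Gamma_h$ onto $\Gamma_{h'}$, and every non-empty $G_\alpha$-set of $G\times(H/N)$ pulls back to a non-empty $G_\alpha$-set of $G\times H$, which meets $\Gamma_h$ by hypothesis. Second, $h'$ is surjective: projecting $\Gamma_h$ via $\pi_H\colon G\times H\to H$ makes $h(G)$ a $G_\alpha$-dense subgroup of $H$, so Lemma~\ref{gdeltad1}(a) yields $h(G)+N=H$, i.e.\ $\pi(h(G))=H/N$.

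For (a), take $\tau':=\tau_{h'}$ in the sense of Remark~\ref{ossgraf3}, so that $j'\colon(G,\tau')\to\Gamma_{h'}$ is a topological group isomorphism. The product $G\times(H/N)$ is $\alpha$-pseudocompact (its completion is $\widetilde G\times(H/N)$, in which it is $G_\alpha$-dense, and Theorem~\ref{cr-alpha} applies), so Corollary~\ref{alpha-psc<->G_alpha-dense} makes $\Gamma_{h'}$, and hence $(G,\tau')$, $\alpha$-pseudocompact. The refinement is strict: if $\tau'=\tau$ then $h'$ is $\tau$-continuous by Remark~\ref{ossgraf3}, so $\Gamma_{h'}$ is closed and, being $G_\alpha$-dense (hence dense), equal to $G\times(H/N)$, contradicting $|H/N|>1$. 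From $\widetilde{\Gamma_{h'}}=\widetilde G\times(H/N)$ one gets $w(G,\tau')=w(\widetilde G)+w(H/N)\leq w(G,\tau)+\alpha$, and since $w(G,\tau)>\alpha$ (otherwise $(G,\tau)$ is compact and Lemma~\ref{psi->w} forces $\tau'=\tau$), the right-hand side equals $w(G,\tau)$.

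For (b), apply Lemma~\ref{str0'} to $h'\colon(G,\tau)\to H/N$: the hypotheses are all in place ($H/N$ compact of weight $\leq\alpha$, $|H/N|>1$, $h'$ surjective, $\Gamma_{h'}$ is $G_\alpha$-dense), so $D:=\ker h'=h^{-1}(N)$ is $G_\alpha$-dense in $(G,\tau)$. It is proper because $H/N\neq 0$, and $w(D)=w(G,\tau)$ holds because $D$ is $\alpha$-pseudocompact by Corollary~\ref{alpha-psc<->G_alpha-dense} and its completion coincides with $\widetilde G$. The main obstacle in the whole argument is the reduction step: producing a proper $N\in\Lambda_\alpha(H)$ and checking that the induced $h'$ remains surjective onto $H/N$; once this is in place, both (a) and (b) fall out cleanly from Remark~\ref{ossgraf3} and Lemma~\ref{str0'}.
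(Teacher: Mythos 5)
Your proof is correct, and its skeleton matches the paper's: reduce to a compact target of small weight by means of a character, prove (a) via the graph topology of Remark \ref{ossgraf3} together with Corollary \ref{alpha-psc<->G_alpha-dense} for $\alpha$-pseudocompactness of the graph and the closed-graph argument for strictness, and prove (b) via Lemma \ref{str0'} applied to the reduced map. Where you genuinely differ is the reduction itself. The paper replaces $H$ by the \emph{image} $H'=\chi(H)\leq\T$ of a character, which is compact \emph{metrizable}; surjectivity of $\chi\circ h$ is then immediate (a $G_\alpha$-dense subgroup of a compact metrizable group is the whole group, points being $G_\delta$-sets), and the weight equality in (a) needs only $w(H')\leq\omega$. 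You instead pass to the \emph{quotient} $H/N$ with $N=\ker(\chi\restriction_H)\in\Lambda_\alpha(H)$, for which you only know $w(H/N)\leq\alpha$; this costs you two extra arguments, both of which you supply correctly: surjectivity of $\pi\circ h$ via Lemma \ref{gdeltad1}(a), and the inequality $w(G,\tau)>\alpha$, which you obtain by the nice observation that otherwise $(G,\tau)$ would be compact of weight $\leq\alpha$, and then the Lemma \ref{psi->w} argument (the $r_\alpha$-extremality half of Proposition \ref{ccomp}) would contradict the strict refinement $\tau'>\tau$ you had already established. Two remarks. First, your detour is avoidable: since $H/N$ is compact, the continuous bijective homomorphism $H/N\to\chi(H)\leq\T$ induced by $\chi$ is automatically a topological isomorphism, so your target is in fact metrizable and the paper's shortcuts would apply verbatim. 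Second, you inherit a cosmetic imprecision from the paper: Lemma \ref{str0'} is stated for abelian domains, while $G$ in this theorem need not be abelian; this is harmless, because the direction you use (graph $G_\alpha$-dense $\Rightarrow$ kernel $G_\alpha$-dense) nowhere uses commutativity of $G$.
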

\begin{proof}
We first prove that $H$ can be chosen compact of weight $\leq\alpha$ and that in such a case $h$ is surjective.
There exists a continuous character $\chi:H\rightarrow\T$ such that $\chi (H)\neq\{0\}$. Let $H'=\chi(H)\subseteq\T$ and $h'=\chi\circ h$. Then $H'$ is compact and metrizable. So $H'$ is either $\T$ or $\Z(n)\leq\T$ for some integer $n>1$. Since $1_G\times \chi:(G,\tau)\times H\to (G,\tau)\times H'$ is a continuous surjective homomorphism such that $(1_G\times\chi)(\Gamma_h)=\Gamma_{h'}$ and $\Gamma_h$ is $G_\alpha$-dense in $(G,\tau)\times H$, it follows that $\Gamma_{h'}$ is $G_\alpha$-dense in $(G,\tau)\times H'$. Let $p_2:G\times H'\to H'$ be the canonical projection. Then $p_2(\Gamma_{h'})=h'(G)$ is $G_\alpha$-dense in $H'$, which is metrizable. Hence $h'(G)=H'$ and $h'$ is surjective.

(a) Since $G$ is $G_\alpha$-dense in $\widetilde {(G,\tau)}$ by Theorem \ref{cr-alpha} and since $\Gamma_h$ is $G_\alpha$-dense in $(G,\tau)\times H$, it follows that $\Gamma_h$ is $G_\alpha$-dense in $\widetilde{(G,\tau)}\times H$. Consequently $\Gamma_h$ with the topology inherited from $\widetilde{(G,\tau)}\times H$ is $\alpha$-pseudocompact in view of Corollary \ref{alpha-psc<->G_alpha-dense}. Using Remark \ref{ossgraf3} let $\tau_h$ be the coarsest group topology on $G$ such that $\tau_h\geq\tau$ and $h$ is $\tau_h$-continuous; then $(G,\tau_h)$ is topologically isomorphic to $\Gamma_h$ and so it is $\alpha$-pseudocompact. If $\tau_h=\tau$, then $h$ is continuous and the closed graph theorem yields that $\Gamma_h$ is closed in $(G,\tau)\times H$. This is not possible because $\Gamma_h$ is dense in $(G,\tau)\times H$ by the hypothesis. Hence $\tau_h\gneq\tau$. By the hypothesis $w(G,\tau)>\omega$ and since $H$ is metrizable, then $$w(G,\tau_h)=w(\Gamma_h)=w((G,\tau)\times H)=w(G,\tau)\cdot w(H)=w(G,\tau).$$

(b) Let $D=\ker h$. By Lemma \ref{str0'} $D$ is $G_\alpha$-dense in $(G,\tau)$. Moreover $D$ is proper in $G$. Clearly $w(D)=w(G,\tau)$.
\end{proof}

The next theorem shows that $\alpha$-extremality ``puts together" $s_\alpha$- and $r_\alpha$-extremality. It is the generalization to $\alpha$-pseudocompact abelian groups of \cite[Theorem 3.12]{DGM}.

\begin{theorem}\label{debestr}
Let $\alpha$ be an infinite cardinal and let $G$ be an $\alpha$-pseu\-do\-com\-pact abelian group which is either $s_\alpha$- or $r_\alpha$-extremal. Then $G$ is $\alpha$-extremal.
\end{theorem}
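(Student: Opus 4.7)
The strategy is to argue by contrapositive using Theorem~\ref{dense-graph-alpha}: assuming $G$ fails either $d_\alpha$- or $c_\alpha$-extremality, I will produce a (not necessarily continuous) surjective homomorphism $h\colon G\to H$ with $H$ a non-trivial compact metrizable abelian group (hence automatically $\alpha$-pseudocompact with $w(H)\leq\alpha$) such that $\ker h$ is $G_\alpha$-dense in $G$. Lemma~\ref{str0'} will then make $\Gamma_h$ a $G_\alpha$-dense subgroup of $G\times H$, and Theorem~\ref{dense-graph-alpha} will simultaneously supply a proper $G_\alpha$-dense subgroup of $G$ and a strictly finer $\alpha$-pseudocompact group topology on $G$---the former refuting $s_\alpha$-extremality (via Corollary~\ref{alpha-psc<->G_alpha-dense}, which identifies dense $\alpha$-pseudocompact subgroups with $G_\alpha$-dense subgroups), the latter refuting $r_\alpha$-extremality. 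Thus the disjunctive hypothesis ``$G$ is $s_\alpha$- or $r_\alpha$-extremal'' will be contradicted in either case. Note also that by Corollary~\ref{alpha-psc<->G_alpha-dense} any dense $\alpha$-pseudocompact witness $D$ to the failure of $d_\alpha$- or $c_\alpha$-extremality is itself $G_\alpha$-dense in $G$, so any overgroup of $D$---in particular a kernel built to contain $D$---is automatically $G_\alpha$-dense.

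For the $d_\alpha$ case, assume there is a dense $\alpha$-pseudocompact subgroup $D$ with $G/D$ non-divisible. Then $p(G/D)\neq G/D$ for some prime $p$, so $(G/D)/p(G/D)$ is a non-trivial $\Z(p)$-vector space; I pick a surjection of it onto $\Z(p)$ and precompose with the algebraic quotient $\pi\colon G\to G/D$ to obtain a surjection $h\colon G\to \Z(p)$ with $\ker h\supseteq D$. Since $\Z(p)$ is finite, the hypotheses $|H|>1$ and $w(H)\leq\alpha$ of Lemma~\ref{str0'} are trivially met, and the general mechanism above closes the case.

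For the $c_\alpha$ case, take a dense $\alpha$-pseudocompact subgroup $D$ with $r_0(G/D)\geq 2^\alpha$; the target will be $H=\T$. I choose a free subgroup $F\leq G/D$ of rank $2^\alpha$; since $|\T|=2^{\aleph_0}\leq 2^\alpha$, the group $\T$ is an algebraic quotient of the free abelian group $F\cong\Z^{(2^\alpha)}$, so I fix any surjection $\psi\colon F\to\T$. Using the divisibility (equivalently, injectivity in the category of abelian groups) of $\T$, I extend $\psi$ to a homomorphism $\varphi\colon G/D\to\T$; the extension is still surjective because its image contains $\psi(F)=\T$. Setting $h=\varphi\circ\pi\colon G\to\T$ gives a surjection with $\ker h\supseteq D$, and the general argument concludes exactly as in the previous case.

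The main obstacle is the $c_\alpha$ case: the hypothesis $r_0(G/D)\geq 2^\alpha$ is purely quantitative, and the point is to match it against the cardinality of $\T$ so that the injectivity of $\T$ forces the extended homomorphism to remain surjective, rather than merely producing some non-trivial (possibly small-image) character of $G/D$. The $d_\alpha$ case is comparatively straightforward, essentially because a non-trivial exponent-$p$ quotient $(G/D)/p(G/D)$ automatically admits the compact metrizable group $\Z(p)$ itself as a direct algebraic quotient.
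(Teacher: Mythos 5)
Your proof is correct and follows essentially the same route as the paper: both reduce a failure of $\alpha$-extremality to a surjective homomorphism $h$ of $G$ onto a non-trivial finite group (non-divisible case) or onto $\T$ (free-rank case) whose kernel contains the $G_\alpha$-dense witness $D$, and then invoke Lemma~\ref{str0'} together with Theorem~\ref{dense-graph-alpha}. The only cosmetic differences are that the paper notes $D$ itself already refutes $s_\alpha$-extremality (rather than routing that half through the dense graph theorem) and leaves implicit the free-subgroup/injectivity-of-$\T$ argument that you spell out.
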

\begin{proof}
Suppose looking for a contradiction that $G$ is not $\alpha$-extremal. Then there exists a dense $\alpha$-pseudocompact subgroup $D$ of $G$ such that either $G/D$ is not divisible or $r_0(G/D)\geq 2^\alpha$. In both cases $D$ has to be a proper dense $\alpha$-pseudocompact subgroup of $G$. Then $G$ is not $s_\alpha$-extremal. We prove that $G$ is not $r_\alpha$-extremal as well. Note that $D$ is $G_\alpha$-dense in $G$ by Corollary \ref{alpha-psc<->G_alpha-dense}.
Let $\pi:G\rightarrow G/D$ be the canonical projection.

Now we build a surjective homomorphism $h:G/D \to H$, where $H$ is compact $|H|>1$ and $\ker h$ is $G_\alpha$-dense in $G$. By assumption $D$ is a $G_\alpha$-dense subgroup of $G$ such that either $G/D$ is not divisible or $r_0(G/D)\geq 2^\alpha$. In the first case $G/D$ admits a non-trivial finite quotient $H$, while in the second case we can find a surjective homomorphism $G/D \to \T=H$ as $|\T|\leq r_0(G/D)$. Since $\ker h$ contains $D$ in both cases, $\ker h$ is $G_\alpha$-dense in $G$. Apply Lemma \ref{str0'} and Theorem \ref{dense-graph-alpha} to conclude that $G$ is not $r_\alpha$-extremal.
\end{proof}

The following proposition and lemma are the generalizations to the $\alpha$-pseudocompact case of \cite[Theorems 5.8 and 5.9]{CRob} respectively. The ideas used in the proofs are similar. The next claim is needed in the proofs of both.

\begin{claim}\label{prime_index}
Let $p\in\Prm$, let $G$ be an abelian group of exponent $p$ and $h:G\to\Z(p)\leq\T$ a continuous surjective homomorphism. Then $\Gamma_h$ has index $p$ in $G\times \Z(p)$.
\end{claim}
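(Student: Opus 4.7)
The plan is to realize $\Gamma_h$ as the kernel of a surjective homomorphism from $G\times\Z(p)$ onto $\Z(p)$, so that the index $[G\times\Z(p):\Gamma_h]$ is read off immediately from the first isomorphism theorem.

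Concretely, I would define
\[
\phi:G\times\Z(p)\longrightarrow\Z(p),\qquad \phi(x,y)=y-h(x).
\]
Since $h$ is a homomorphism, so is $\phi$. It is surjective because $\phi(0,y)=y$ for every $y\in\Z(p)$ (so one does not even need the surjectivity of $h$ for this half). Moreover
\[
\ker\phi=\{(x,y)\in G\times\Z(p):y=h(x)\}=\Gamma_h.
\]
Hence $(G\times\Z(p))/\Gamma_h\cong\Z(p)$ and $[G\times\Z(p):\Gamma_h]=|\Z(p)|=p$.

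There is essentially no obstacle: the statement is a purely algebraic identity about graphs of homomorphisms with codomain of prime order. The hypotheses that $G$ has exponent $p$ and that $h$ is continuous play no role in this particular computation; they are presumably present because the claim is invoked later in a setting (the $p$-torsion part of some $\alpha$-pseudocompact group mapping into $\Z(p)\leq\T$) where both conditions are automatic and convenient.
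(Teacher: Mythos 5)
Your proposal is correct and is essentially identical to the paper's own proof: the paper defines $\xi(g,y)=h(g)-y$ (the negative of your $\phi$), observes that $\xi$ is surjective with $\ker\xi=\Gamma_h$, and concludes via the first isomorphism theorem that the index is $p$. Your side remark that exponent-$p$ and continuity are not needed for this computation is also accurate.
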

\begin{proof}
Consider $\xi:G\times\Z(p)\rightarrow\Z(p)$, defined by $\xi(g,y)=h(g)-y$ for all $(g,y)\in G\times\Z(p)$. Then $\xi$ is surjective and $\ker\xi=\Gamma_h$. Therefore $G\times\Z(p)/\ker\xi=G\times\Z(p)/\Gamma_h$ is algebraically isomorphic to $\Z(p)$ and so they have the same cardinality $p$.
\end{proof}

The following proposition shows that for $\alpha$-pseudocompact abelian gro\-ups of prime exponent $s_\alpha$-extremality is equivalent to $r_\alpha$-extremality.

\begin{proposition}\label{pgr1}
Let $\alpha$ be an infinite cardinal and let $(G,\tau)$ be an $\alpha$-pseu\-do\-com\-pact abelian group of exponent $p\in\Prm$. Then the following conditions are equivalent:
\begin{itemize}
\item[(a)]there exist an $\alpha$-pseudocompact abelian group $H$ with $|H|>1$ and a homomorphism $h:G\to H$ such that $\Gamma_h$ is $G_\alpha$-dense in $(G,\tau)\times H$;
\item[(b)]$(G,\tau)$ is not $s_\alpha$-extremal;
\item[(c)]$(G,\tau)$ is not $r_\alpha$-extremal.
\end{itemize}
\end{proposition}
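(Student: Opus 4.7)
The plan is to establish the four implications among (a), (b), (c). The directions away from (a) are immediate from the dense graph theorem: Theorem \ref{dense-graph-alpha}(b) together with Corollary \ref{alpha-psc<->G_alpha-dense} (to upgrade $G_\alpha$-density to $\alpha$-pseudocompactness) yields (a)$\Rightarrow$(b), and Theorem \ref{dense-graph-alpha}(a) yields (a)$\Rightarrow$(c). For the converses the main tool will be Lemma \ref{str0'}: it reduces the task of producing an $H$ and $h$ as in (a) to producing a surjective homomorphism $\chi:G\to\Z(p)$ with $\ker\chi$ being $G_\alpha$-dense in $(G,\tau)$; here $H=\Z(p)$ is finite, hence compact and $\alpha$-pseudocompact, with $|H|>1$. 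The exponent-$p$ hypothesis enters because every non-zero element of $\HG$ then takes values in $\Z(p)\leq\T$, and every non-trivial homomorphism $G\to\Z(p)$ is automatically surjective.

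For (b)$\Rightarrow$(a), take a proper dense $\alpha$-pseudocompact $D\subsetneq G$; Corollary \ref{alpha-psc<->G_alpha-dense} upgrades this to $D$ being $G_\alpha$-dense in $G$. Since $G/D$ is a non-trivial vector space over $\Z(p)$, it projects onto $\Z(p)$; composing with the quotient produces a surjective $\chi:G\to\Z(p)$ with $\ker\chi\supseteq D$, so $\ker\chi$ is $G_\alpha$-dense, and Lemma \ref{str0'} gives (a).

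For (c)$\Rightarrow$(a), let $\tau'>\tau$ be a strictly finer $\alpha$-pseudocompact (hence precompact) group topology on $G$. Precompact abelian topologies are the initial topologies of their continuous character groups, so $\widehat{(G,\tau)}\subsetneq\widehat{(G,\tau')}$, and I pick a surjective $\chi:G\to\Z(p)$ that is $\tau'$-continuous but not $\tau$-continuous. The crucial step is to show $\ker\chi$ is $G_\alpha$-dense in $(G,\tau)$. Suppose not: Corollary \ref{lambda2} supplies $x\in G$ and $N\in\Lambda_\alpha((G,\tau))$ with $(x+N)\cap\ker\chi=\emptyset$; since $\chi|_N:N\to\Z(p)$ is a homomorphism whose image misses the non-zero element $-\chi(x)$, the image must be trivial, forcing $N\subseteq\ker\chi$. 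From $\tau\leq\tau'$ it follows that $N\in\Lambda_\alpha((G,\tau'))$ as well, so by Theorem \ref{lambda4} and Lemma \ref{psi->w} both $(G/N,\tau/N)$ and $(G/N,\tau'/N)$ are compact Hausdorff of weight $\leq\alpha$. Since $\tau/N\leq\tau'/N$, the identity from the second to the first is a continuous bijection from a compact space to a Hausdorff space, hence a homeomorphism, so $\tau/N=\tau'/N$. But $\chi$ factors through the quotient $G\to G/N$, and the factored map is $\tau'/N$-continuous, therefore $\tau/N$-continuous, making $\chi$ itself $\tau$-continuous --- a contradiction. Hence $\ker\chi$ is $G_\alpha$-dense and Lemma \ref{str0'} delivers (a).

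The main obstacle is precisely this last argument: a character that is only $\tau'$-continuous must still be shown to have $G_\alpha$-dense kernel for the coarser $\tau$. The resolution is the compact-Hausdorff uniqueness trick, which forces $\tau/N=\tau'/N$ on any candidate obstruction $N\in\Lambda_\alpha((G,\tau))$ contained in $\ker\chi$, thereby collapsing the difference between $\tau$ and $\tau'$ on the quotient $G/N$ through which $\chi$ factors.
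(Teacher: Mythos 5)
Your proof is correct, but it closes the cycle of implications differently from the paper. Both proofs get (a)$\Rightarrow$(b) and (a)$\Rightarrow$(c) from Theorem \ref{dense-graph-alpha}; the paper then proves (b)$\Rightarrow$(c) and (c)$\Rightarrow$(a), whereas you prove (b)$\Rightarrow$(a) and (c)$\Rightarrow$(a), funneling both converses through Lemma \ref{str0'}, i.e., through the production of a surjective character onto $\Z(p)$ with $G_\alpha$-dense kernel. The paper's (b)$\Rightarrow$(c) is an explicit construction: it enlarges the proper dense $\alpha$-pseudocompact subgroup $D$ to a maximal one of index $p$, declares its cosets open, and invokes Lemma \ref{finite_index} to see the strictly finer topology is still $\alpha$-pseudocompact. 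Its (c)$\Rightarrow$(a) takes the $\tau'$-continuous, non-$\tau$-continuous character $h$ and argues on the graph: by the closed graph theorem $\Gamma_h$ is not $\tau\times$-closed, by Claim \ref{prime_index} it has index $p$, hence it is dense; $\alpha$-pseudocompactness of $\Gamma_h\cong(G,\tau_h)$ (Remark \ref{ossgraf3}, using $\tau_h\leq\tau'$) plus Corollary \ref{alpha-psc<->G_alpha-dense} then upgrades density to $G_\alpha$-density. Your (b)$\Rightarrow$(a) is essentially the exponent-$p$ specialization of the argument the paper uses to prove Theorem \ref{debestr}, so it is not new in spirit, but your (c)$\Rightarrow$(a) genuinely differs: instead of closed graph plus prime index, you show directly that $\ker\chi$ is $G_\alpha$-dense, since any obstruction produced by Corollary \ref{lambda2} yields (via simplicity of $\Z(p)$) some $N\in\Lambda_\alpha(G,\tau)$ inside $\ker\chi$, and then Theorem \ref{lambda4} and Lemma \ref{psi->w} make both quotient topologies on $G/N$ compact Hausdorff of weight $\leq\alpha$, forcing $\tau/N=\tau'/N$ and hence $\tau$-continuity of $\chi$, a contradiction. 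What each approach buys: the paper's route gives the finer topology in (b)$\Rightarrow$(c) by a concrete, hands-on construction and uses the short prime-index density trick; your route is more unified, avoids the closed graph theorem and Claim \ref{prime_index} altogether, and isolates a reusable principle --- a character continuous in some finer $\alpha$-pseudocompact topology whose kernel contains a member of $\Lambda_\alpha(G,\tau)$ is already $\tau$-continuous.
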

\begin{proof}
(a)$\Rightarrow$(b) and (a)$\Rightarrow$(c) follows from Theorem \ref{dense-graph-alpha}.

(b)$\Rightarrow$(c) Suppose that $(G,\tau)$ is not $s_\alpha$-extremal. Then there exists a proper dense $\alpha$-pseudocompact subgroup $D$ of $(G,\tau)$. We can suppose without loss of generality that $D$ is maximal and so that $|G/D|=p$. Let $\tau'$ be the coarsest group topology such that $\tau'\supseteq\tau\cup\{x+D:x\in G\}$.
Since $D\not\in\tau$ but $D\in\tau'$, so $\tau'>\tau$. Since $(D,\tau'\restriction_D)=(D,\tau\restriction_D)$ and $D$ is an $\alpha$-pseudocompact subgroup of $(G,\tau)$, it follows that $D$ is an $\alpha$-pseudocompact subgroup of $(G,\tau')$. Hence $(G,\tau')$ is $\alpha$-pseudocompact by Lemma \ref{finite_index}.

(c)$\Rightarrow$(a) Suppose that $G$ is not $r_\alpha$-extremal. Then there exists an $\alpha$-pseudocompact group topology $\tau'$ on $G$ such that $\tau'>\tau$. Since both topologies are precompact, there exists an homomorphism $h:G\to\T$ such that $h$ is $\tau'$-continuous but not $\tau$-continuous. Note that $h(G)\neq\{0\}$. Being $G$ of exponent $p$, so $h(G)=\Z(p)\leq\T$. Let $$H=\Z(p).$$ Since $h$ is not $\tau$-continuous, by the closed graph theorem $\Gamma_h$ is not closed in $(G,\tau)\times\Z(p)$. Moreover $|(G,\tau)\times\Z(p)/\Gamma_h|=p$ by Claim \ref{prime_index}. Since $\Gamma_h$ is a subgroup of index $p$ in $(G,\tau)\times\Z(p)$ and it is not closed, then $\Gamma_h$ is dense in $(G,\tau)\times\Z(p)$. 

Endow $G$ with the topology $\tau_h$, that is the coarsest group topology on $G$ such that $\tau_h\geq\tau$ and $h$ is $\tau_h$-continuous (see Remark \ref{ossgraf3}). Then $(G,\tau_h)$ is $\alpha$-pseudocompact, because $h$ is $\tau'$-continuous and so $\tau_h\leq\tau'$ and $\tau'$ is $\alpha$-pseudocompact. By Remark \ref{ossgraf3} $\Gamma_h$ is topologically isomorphic to $(G,\tau_h)$ and so $\Gamma_h$ is $\alpha$-pseudocompact. Since $\Gamma_h$ is dense and $\alpha$-pseudocompact in $(G,\tau)\times \Z(p)$, Corollary \ref{alpha-psc<->G_alpha-dense} yields that $\Gamma_h$ is $G_\alpha$-dense in $(G,\tau)\times\Z(p)=(G,\tau)\times H$. 
\end{proof}

\begin{lemma}\label{pgr3}
Let $\alpha$ be an infinite cardinal. Let $(G,\tau)$ be an $\alpha$-pseu\-do\-com\-pact abelian group of exponent $p\in\Prm$ such that $G$ is either $s_\alpha$- or $r_\alpha$-extremal. Then every $h\in\HGemph$ is $P_\alpha\tau$-continuous (i.e., $\HGemph\subseteq\widehat{(G,P_\alpha\tau)}$).
\end{lemma}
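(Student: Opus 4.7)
My plan is to prove the contrapositive via Proposition \ref{pgr1}, which converts non-extremality into a graph-density statement. Suppose $h \in \HG$ is not $P_\alpha\tau$-continuous; I will show that $(G,\tau)$ is neither $s_\alpha$- nor $r_\alpha$-extremal. Because $pG = 0$, we have $h(G) \leq \T[p] = \Z(p)$, and the case $h = 0$ is trivial, so I may assume $h \colon G \twoheadrightarrow \Z(p)$. Since $\Z(p)$ is discrete as a subspace of $\T$ and, by Corollary \ref{lambda2}, the family $\{x+N : x\in G,\ N\in\Lambda_\alpha(G,\tau)\}$ is a base of $P_\alpha\tau$, $h$ is $P_\alpha\tau$-continuous iff $\ker h \supseteq N$ for some $N \in \Lambda_\alpha(G,\tau)$. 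Thus my standing assumption becomes: no $N \in \Lambda_\alpha(G,\tau)$ is contained in $\ker h$.

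The main step is to deduce from this that $\ker h$ is $G_\alpha$-dense in $(G,\tau)$; here the primality of $p$ will be decisive. If $\ker h$ were not $G_\alpha$-dense, Corollary \ref{lambda2} would supply $a \in G$ and $N \in \Lambda_\alpha(G,\tau)$ with $(a+N) \cap \ker h = \emptyset$, which, as $N$ is a subgroup, amounts to $a \notin \ker h + N$. Hence $\ker h + N$ is a proper subgroup of $G$ containing $\ker h$. But $[G : \ker h] = p$ is prime, so by the correspondence theorem applied to $G/\ker h \cong \Z(p)$, the only subgroups of $G$ containing $\ker h$ are $\ker h$ and $G$ itself. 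Therefore $\ker h + N = \ker h$, giving $N \subseteq \ker h$, which contradicts the standing assumption.

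Finally, I will apply Lemma \ref{str0'} with $H = \Z(p)$: this compact metrizable group satisfies the hypotheses ($|H|>1$ and $w(H) = 0 \leq \alpha$), so the $G_\alpha$-density of $\ker h$ in $(G,\tau)$ transfers to $G_\alpha$-density of $\Gamma_h$ in $(G,\tau) \times \Z(p)$. Since $\Z(p)$ is compact, hence $\alpha$-pseudocompact, Proposition \ref{pgr1} (a)$\Rightarrow$(b),(c) then gives that $(G,\tau)$ is both not $s_\alpha$-extremal and not $r_\alpha$-extremal, contradicting the hypothesis. I expect the middle step to be the conceptual heart of the argument: exploiting the prime exponent is precisely what lets me upgrade failure of $P_\alpha\tau$-continuity into $G_\alpha$-density of $\ker h$, since in general an index-$n$ subgroup admits intermediate subgroups and no such clean dichotomy would be available.
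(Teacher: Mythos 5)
Your proof is correct, and while it shares its skeleton with the paper's argument --- both hinge on Proposition \ref{pgr1} together with an exploitation of the prime exponent --- the key middle step is implemented differently. The paper argues directly and inside the product: extremality of $G$ gives, via Proposition \ref{pgr1}, that $\Gamma_h$ is not $G_\alpha$-dense, i.e.\ not dense in $(G,P_\alpha\tau)\times\Z(p)$; by Claim \ref{prime_index} the subgroup $\Gamma_h$ has prime index there, so being non-dense it must be closed, and the closed graph theorem (valid because the codomain $\Z(p)$ is compact) then yields $P_\alpha\tau$-continuity of $h$. You instead run the contrapositive at the level of the kernel: using the base $\{x+N : x\in G,\ N\in\Lambda_\alpha(G)\}$ of $P_\alpha\tau$ from Corollary \ref{lambda2} and the discreteness of $\Z(p)$, you observe that $P_\alpha\tau$-continuity of $h$ is exactly the existence of some $N\in\Lambda_\alpha(G)$ inside $\ker h$; primality of $[G:\ker h]$ then upgrades the failure of this to $G_\alpha$-density of $\ker h$, which Lemma \ref{str0'} converts into $G_\alpha$-density of $\Gamma_h$, and Proposition \ref{pgr1} destroys both forms of extremality. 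The two dichotomies (the paper's ``dense or closed'' for the prime-index graph, your ``dense or contained in $\ker h$'' for the subgroups $\ker h + N$) are two faces of the same use of primality, but your route avoids both Claim \ref{prime_index} and the closed graph theorem, replacing them with Corollary \ref{lambda2} and Lemma \ref{str0'}; this makes the argument a bit longer but more self-contained, since the closed graph theorem for topological groups is itself a nontrivial fact whose proof needs compactness of the codomain. One cosmetic slip: $w(\Z(p))$ is not $0$ (a finite discrete space of $p$ points has weight $p$), but of course $w(\Z(p))\leq\alpha$ still holds, which is all that Lemma \ref{str0'} requires.
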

\begin{proof}
If $h\equiv 0$, then $h$ is $P_\alpha\tau$-continuous. Suppose that $h\not\equiv 0$. Then $h(G)=\Z(p)\leq\T$. Since $G$ is either $s_\alpha$- or $r_\alpha$-extremal, by Proposition \ref{pgr1} $\Gamma_h$ is not $G_\alpha$-dense in $(G,\tau)\times\Z(p)$, that is $\Gamma_h$ is not dense in $P_\alpha((G,\tau)\times\Z(p))=P_\alpha(G,\tau)\times P_\alpha\Z(p)=(G,P_\alpha\tau)\times\Z(p)$. Claim \ref{prime_index} implies that $|G\times\Z(p)/\Gamma_h|=p$. Since $\Gamma_h$ is not dense and is of prime index in $(G,P_\alpha\tau)\times \Z(p)$, it follows that $\Gamma_h$ is closed in $(G,P_\alpha\tau)\times\Z(p)$. By the closed graph theorem $h$ is $P_\alpha\tau$-continuous.
\end{proof}

\section{The $\alpha$-singular groups}\label{alpha-singular-sec}

\subsection{The torsion abelian groups and extremality}\label{torsion-sec}

The following definition and two lemmas are the generalization to the $\alpha$-pseu\-do\-com\-pact case of \cite[Notation 5.10, Theorem 5.11 and Lemma 5.13]{CRob}. The constructions are almost the same.

\begin{deff}
Let $\alpha$ be an infinite cardinal, let $X$ be a topological space and $Y\subseteq X$. The \emph{$\alpha$-closure} of $Y$ in $X$ is $\alpha\text{-cl}_X(Y)=\bigcup\{\overline{Y}^X:A\subseteq Y,\ |A|\leq\alpha\}.$
\end{deff}

For $Y\subseteq X$, the set $\alpha\text{-cl}_X(Y)$ is \emph{$\alpha$-closed} in $X$, i.e., $\alpha\text{-cl}_X(\alpha\text{-cl}_X(Y))=\alpha\text{-cl}_X(Y)$.

\begin{lemma}\label{omegacl}
Let $\alpha$ be an infinite cardinal. Let $G$ be an $\alpha$-pseu\-do\-com\-pact abelian group and let $h\in\HGemph$. Then the following conditions are equivalent:
\begin{itemize}
\item[(a)]$h\in\alpha\text{-cl}_{\HGemph}\widehat{G}$;
\item[(b)]there exists $N\in\Lambda_\alpha(G)$ such that $N\subseteq\ker h$.
\end{itemize}
\end{lemma}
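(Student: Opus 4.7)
The plan is to prove the two implications separately. The direction (a)$\Rightarrow$(b) is essentially a continuity-type argument that uses nothing more than the construction of a natural candidate for $N$, while (b)$\Rightarrow$(a) is the substantive direction and will require both the $\alpha$-pseudocompactness of $G$ (via Theorem \ref{lambda4}) and a density statement about continuous characters inside abstract ones.

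For (a)$\Rightarrow$(b), suppose $h\in\overline{A}^{\text{Hom}(G,\T)}$ for some $A\subseteq\widehat G$ with $|A|\leq\alpha$. I would set
\[
N:=\bigcap_{\chi\in A}\ker\chi.
\]
Each $\ker\chi$ with $\chi\in\widehat G$ is a closed $G_\delta$-subgroup of $G$ (preimage of $\{0\}$, itself a $G_\delta$ in $\T$), so the intersection of $\leq\alpha$ of them is a closed $G_\alpha$-subgroup, yielding $N\in\Lambda_\alpha(G)$. To verify $N\subseteq\ker h$, pick $x\in N$: every $\chi\in A$ vanishes at $x$, and since the pointwise-convergence topology on $\text{Hom}(G,\T)\subseteq\T^G$ is a product topology, membership in $\overline{A}$ is witnessed by a net $\chi_\nu\in A$ with $\chi_\nu(y)\to h(y)$ for every $y\in G$; evaluating at $y=x$ gives $h(x)=0$.

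For (b)$\Rightarrow$(a), given $N\in\Lambda_\alpha(G)$ with $N\subseteq\ker h$, Theorem \ref{lambda4} yields that $K:=G/N$ is compact with $w(K)\leq\alpha$, so $|\widehat K|\leq\alpha$ by Fact \ref{dens1}(c). Let $\pi\colon G\to K$ be the projection; $h$ factors as $\bar h\circ\pi$ with $\bar h\in\text{Hom}(K,\T)$. Setting $A:=\{\chi\circ\pi:\chi\in\widehat K\}\subseteq\widehat G$ gives $|A|\leq\alpha$, and it suffices to show $\bar h\in\overline{\widehat K}$ inside $\text{Hom}(K,\T)$ with the pointwise topology, because precomposition with $\pi$ is continuous and takes this closure into $\overline{A}^{\text{Hom}(G,\T)}$.

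The main obstacle is the density claim $\overline{\widehat K}=\text{Hom}(K,\T)$. The cleanest route is Pontryagin duality: writing $K_d$ for $K$ with the discrete topology, the pointwise-convergence topology on $\text{Hom}(K,\T)=\widehat{K_d}$ agrees with the compact-open topology there, making $\widehat{K_d}$ a compact abelian group, and the inclusion $\widehat K\hookrightarrow\widehat{K_d}$ (dual to the continuous identity $K_d\to K$) has dense image because its annihilator in $(\widehat{K_d})^\wedge=K_d$ equals $\{x\in K:\chi(x)=0\ \forall\chi\in\widehat K\}$, which is $\{0\}$ since continuous characters separate points of the compact group $K$; a standard argument (the quotient of $\widehat{K_d}$ by $\overline{\widehat K}$ is a compact abelian group with trivial dual, hence trivial) concludes density. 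Assembling these pieces gives $h\in\alpha\text{-cl}_{\text{Hom}(G,\T)}\widehat G$.
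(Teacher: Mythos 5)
Your proposal is correct and takes essentially the same route as the paper: the same witness $N=\bigcap_{\chi\in A}\ker\chi$ for (a)$\Rightarrow$(b), and for (b)$\Rightarrow$(a) the same reduction via Theorem \ref{lambda4} and Fact \ref{dens1}(c) to the family $A=\{\chi\circ\pi:\chi\in\widehat{G/N}\}$ of cardinality $\leq\alpha$, concluded by Pontryagin duality for $\text{Hom}(\cdot,\T)$ viewed as the dual of a discrete group. The only difference is one of packaging: you prove that $\widehat{G/N}$ is dense in $\text{Hom}(G/N,\T)$ and transfer along precomposition with $\pi$, whereas the paper argues by contradiction, using that a character of the compact group $\HG$ separating $h$ from $A$ is evaluation at some $x\in G$, which is then forced into $N\subseteq\ker h$ --- the same separation-of-points-plus-duality fact in both cases.
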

\begin{proof}
(a)$\Rightarrow$(b) Suppose that $h\in\alpha\text{-cl}_{\HG}\widehat{G}$. Let $A\subseteq\widehat{G}$ such that $|A|\leq\alpha$ and $h\in \overline A^{\HG}$. We set $N=\bigcap\{\ker f:f\in A\}$. Then $N\in\Lambda_\alpha(G)$. Moreover $N\subseteq\ker h$ as $h\in\overline A^{\HG}$.

(b)$\Rightarrow$(a) Let $N\in\Lambda_\alpha(G)$ and let $\pi:G\to G/N$ be the canonical projection. The group $G/N$ is compact of weight $\leq\alpha$ and so $|\widehat{G/N}|=w(G/N)\leq\alpha$ by Fact \ref{dens1}(c). We enumerate the elements of $\widehat{G/N}$ as $\widehat{G/N}=\{\chi_\lambda:\lambda<\alpha\}$ and define $A=\{\chi_\lambda\circ\pi:\lambda<\alpha\}\leq\widehat{G}.$ We prove that $h\in \overline A^{\HG}$. Suppose that $h\not\in\overline A^{\HG}$. Since $A$ is a closed subgroup of the compact group $\HG$, there exists $\xi\in\widehat{\HG}$ such that $\xi(h)\neq 0$ and $\xi(f)=0$ for every $f\in A$. By the Pontryagin duality there exists $x\in G$ such that $f(x)=\xi(f)$ for every $f\in\widehat{G}$. Then $\chi_\lambda(\pi(x))=\xi(\chi_\lambda\circ \pi)=0$ for every $\chi_\lambda\in\widehat{G/N}$. Then $\pi(x)=x+N=N$ and so $x\in N\subseteq\ker h$, i.e., $h(x)=0$. But $h(x)=\xi(h)\neq 0$, a contradiction.
\end{proof}

\begin{lemma}\label{pgr4}
Let $\alpha$ be an infinite cardinal. Let $(G,\tau)$ be an $\alpha$-pseu\-do\-com\-pact abelian group of exponent $p\in\Prm$ such that $G$ is either $s_\alpha$- or $r_\alpha$-extremal and $|G|=\beta\geq\alpha$. Then for the completion $K$ of $(G,\tau)$:
\begin{itemize}
\item[(a)]for every $h\in\HGemph$ there exists $h'\in\text{Hom}(K,\T)$ such that $h'\restriction_G=h$ and $h'$ is $P_\alpha K$-continuous;
\item[(b)]$\HGemph\subseteq\alpha\text{-cl}_{\HGemph}\widehat{(G,\tau)}$;
\item[(c)]$\psi(G,\tau)\leq\alpha\cdot\log\beta$.
\end{itemize}
\end{lemma}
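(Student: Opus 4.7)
The plan is to deduce all three parts from Lemma~\ref{pgr3}, which under our hypotheses says every $h\in\HG$ is $P_\alpha\tau$-continuous. Since $G$ has exponent $p$, $h(G)\subseteq\Z(p)$ is discrete, so $\ker h$ is $P_\alpha\tau$-open and hence contains a $G_\alpha$-set of $(G,\tau)$ at $0$. By Lemma~\ref{lambda1} this in turn contains some $N\in\Lambda_\alpha(G)$ with $\psi(G/N)\leq\alpha$, and Lemma~\ref{psi->w} then upgrades $G/N$ to a compact group of weight $\leq\alpha$.

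This already settles (b): the inclusion $N\subseteq\ker h$ with $N\in\Lambda_\alpha(G)$ is exactly condition (b) of Lemma~\ref{omegacl}, so $h\in\alpha\text{-cl}_{\HG}\widehat G$. For (a) I would set $M=\overline{N}^K$ and first establish the topological identification $G/N\cong K/M$: the density of $G$ in $K$ together with the $\tau$-closedness of $N$ gives $M\cap G=\overline{N}^G=N$, so the natural homomorphism $\iota:G/N\to K/M$ induced by the inclusion $G\hookrightarrow K$ is a continuous injection with dense image, and since $G/N$ is compact it is a topological isomorphism. In particular $K/M$ is compact of weight $\leq\alpha$, so $\psi(K/M)\leq\alpha$ and $M=\pi_M^{-1}(0)$ is a closed $G_\alpha$-subgroup of $K$, i.e.\ $M\in\Lambda_\alpha(K)$. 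Writing $h=\bar h\circ\pi_N$ with $\bar h:G/N\to\T$, I define $h'=\bar h\circ\iota^{-1}\circ\pi_M:K\to\T$; then $h'\restriction_G=h$ by construction and $\ker h'\supseteq M$ is $P_\alpha K$-open, so $h'$ (with image in the discrete group $\Z(p)$) is $P_\alpha K$-continuous.

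Part (c) follows from combining Lemma~\ref{pgr3} with Lemmas~\ref{baire4} and~\ref{baire1}: the first two give $(G,P_\alpha\tau)=P_\alpha G^\#$, and Theorem~\ref{baire1} makes this space Baire. Theorem~\ref{baire5} then yields simultaneously $|G|\leq 2^\alpha$ and discreteness of $P_\alpha G^\#$; the former gives $\log\beta\leq\alpha$, so $\alpha\cdot\log\beta=\alpha$, while the latter says $\{0\}$ is a $G_\alpha$-set of $(G,\tau)$, whence $\psi(G,\tau)\leq\alpha=\alpha\cdot\log\beta$. The only delicate point is the topological identification $G/N\cong K/M$ in (a): this is what converts the (automatic, by divisibility of $\T$) algebraic extension of $h$ to $K$ into one whose kernel is genuinely $P_\alpha K$-open, and everything else is a straightforward assembly of previously established results.
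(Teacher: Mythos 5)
Your proof is correct, but it takes a genuinely different route from the paper's in each of the three parts. For (b), the paper proves (a) first and then transfers $h'\in\alpha\text{-cl}_{\text{Hom}(K,\T)}\widehat{K}$ down to $G$ by restricting a pointwise-convergent net; you instead apply Lemma \ref{omegacl} directly to $G$, since Lemma \ref{pgr3} plus the exponent-$p$ hypothesis (so $h(G)\subseteq\Z(p)$ is discrete) make $\ker h$ a $P_\alpha\tau$-open set, and Lemma \ref{lambda1} then puts some $N\in\Lambda_\alpha(G)$ inside $\ker h$; this is shorter and makes (b) independent of (a). For (a), the paper extends $h$ by the abstract extension theorem (a continuous homomorphism from the dense subgroup $(G,P_\alpha\tau)$ of $P_\alpha K$ into the complete group $\T$ extends continuously), while you build $h'$ explicitly as $\bar h\circ\iota^{-1}\circ\pi_M$ using the topological isomorphism $\iota:G/N\to K/M$ with $M=\overline{N}^K$; your verification is sound ($M\cap G=N$ since $N$ is $\tau$-closed, and a continuous injective homomorphism with dense image from a compact group to a Hausdorff group is onto and open), and the $P_\alpha K$-continuity of $h'$ is correctly read off from $\ker h'\supseteq M\in\Lambda_\alpha(K)$. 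For (c), the paper combines (b) with Fact \ref{dens1} and a point-separation count in $\HG$ (a dense set $S$ of size $\leq\log\beta$, each $h\in S$ approximated by $\leq\alpha$ continuous characters) to get $\psi(G,\tau)\leq\alpha\cdot\log\beta$; you instead run Lemma \ref{pgr3} together with Lemma \ref{baire4}, Theorem \ref{baire1} and Theorem \ref{baire5} --- exactly the chain the paper deploys later inside the proof of Theorem \ref{torsion} --- and obtain the stronger conclusions $|G|\leq 2^\alpha$ and $\psi(G,\tau)\leq\alpha$. So your (c) front-loads the Baire category machinery and in exchange strengthens the lemma (its bound collapses to $\alpha$), whereas the paper's counting argument keeps (c) independent of the Baire theorems and gives a bound meaningful for arbitrary $\beta$; both are logically valid, and yours involves no circularity since all the results you invoke precede Lemma \ref{pgr4} and do not depend on it.
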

\begin{proof}
(a) Since $G$ is $G_\alpha$-dense in $K$ by Theorem \ref{cr-alpha}, it follows that $G$ is dense in $P_\alpha K$. By Lemma \ref{pgr3} every $h\in \HG$ is $P_\alpha\tau$-continuous. Therefore $h$ can be extended to $h'\in\text{Hom}(K,\T)$, such that $h'$ is $P_\alpha K$-continuous, because $(G,P_\alpha\tau)$ is dense in $P_\alpha K$.

(b) Since $h\in\HG$, by (a) there exists $h'\in\text{Hom}(K,\T)$ such that $h'\restriction_G=h$ and $h'$ is $P_\alpha K$-continuous. By Lemma \ref{omegacl} $h'\in\alpha\text{-cl}_{\text{Hom}(K,\T)}\widehat{K}.$ Therefore $h\in\alpha\text{-cl}_{\HG}\widehat{(G,\tau)}$. Indeed, let $A'\subseteq\widehat{K}$ be such that $|A'|\leq\alpha$ and $h'\in\overline {A'}^{\text{Hom}(K,\T)}$. For $f'\in A'$ we set $f=f'\restriction_G\in\widehat{(G,\tau)}$ and $A=\{f'\restriction_G:f'\in A'\}$. There exists a net $\{f'_{\lambda}\}_{\lambda}$ in $A'$ such that $f_\lambda'\to h'$ in $\text{Hom}(K,\T)$; since the topology on $\text{Hom}(K,\T)$ is the point-wise convergence topology, this means that $f'_{\lambda}(x)\rightarrow h'(x)$ for every $x\in K$. Then $f_{\lambda}(x)\rightarrow h(x)$ for every $x\in G$. Hence $f_\lambda\to h$ in $\HG$ and so $h\in\overline A^{\HG}\subseteq\alpha\text{-cl}_{\HG}\widehat{(G,\tau)}$.

(c) By Fact \ref{dens1}(a),(c) $d(\HG)=\log w(\HG)\leq\log\beta$. Then there exists a dense subset $S$ of $\HG$ such that $|S|\leq\log\beta$. By (b) for every $h\in S$ there exists $A(h)\subseteq\widehat{(G,\tau)}$ such that $|A(h)|\leq\alpha$ and $h\in \overline{A(h)}^{\HG}$. Then $A:=\bigcup\{A(h):h\in S\}$ is dense in $\HG$, because $S\subseteq\overline A^{\HG}$. Moreover $A\subseteq \widehat{(G,\tau)}$ and $|A|\leq\alpha\cdot\log\beta$. 
Let $x\in G\setminus\{0\}$ and let $\{V_n:n\in\N\}$ be a local base at $0$ of $\T$. Since $A$ is dense in $\HG$, it separates the points of $G$ and so there exists $f\in A$ such that $f(x)\neq 0$. Then there exists $n\in\N$ such that $f(x)\not\in V_n$. Therefore $\bigcap_{n\in\N,f\in A}f^{-1}(V_n)=\{0\}$ and hence $\psi(G,\tau)\leq|A|\leq\alpha\cdot\log\beta$.
\end{proof}

Now we prove Theorem \ref{alpha-extremal-solution} in the torsion case. For $\alpha=\omega$ it implies \cite[Corollary 7.5]{CRob} and the proof is inspired by that of the result from \cite{CRob}. Since every torsion $\alpha$-pseudocompact group is $c_\alpha$-extremal, we observe that a torsion $\alpha$-pseudocompact abelian group is $\alpha$-extremal if and only if it is $d_\alpha$-extremal.

\begin{theorem}\label{torsion}
Let $\alpha$ be an infinite cardinal and let $G$ be an $\alpha$-pseu\-do\-com\-pact torsion abelian group. Then $G$ is $\alpha$-extremal if and only if $w(G)\leq\alpha$.
\end{theorem}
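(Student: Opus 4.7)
The \emph{if} direction is immediate: an $\alpha$-pseudocompact group of weight $\leq\alpha$ is compact, hence $s_\alpha$- and $r_\alpha$-extremal by Proposition~\ref{ccomp}, and therefore $\alpha$-extremal by Theorem~\ref{debestr}. For the converse, I assume $G$ is $\alpha$-pseudocompact, torsion and $\alpha$-extremal; since every torsion abelian group is automatically $c_\alpha$-extremal, the hypothesis reduces to $d_\alpha$-extremality.

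The plan is to reduce first to bounded exponent, then to a single prime-power exponent, and finally to prime exponent. The closed subgroups $G[n]$ cover $G$; since a pseudocompact Hausdorff space is Baire, some $G[n_0]$ has non-empty interior and hence equals $G$. Factoring $n_0=p_1^{k_1}\cdots p_r^{k_r}$, the Chinese Remainder Theorem supplies continuous idempotent endomorphisms of $G$ that yield a topological isomorphism $G\cong\prod_{i=1}^{r}G[p_i^{k_i}]$; each factor is $\alpha$-pseudocompact and (by Proposition~\ref{quozd}) $d_\alpha$-extremal, and a finite product has weight $\leq\alpha$ exactly when each factor does, so I may assume $G$ has exponent $p^k$ for a single prime $p$.

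For the base case $k=1$: every non-trivial quotient of $G$ has exponent $p$ and is not divisible, so $d_\alpha$-extremality forces $s_\alpha$-extremality. Lemma~\ref{pgr3} then guarantees that every $h\in\HG$ is $P_\alpha\tau$-continuous; Lemma~\ref{baire4} identifies $(G,P_\alpha\tau)$ with $P_\alpha G^\#$; and Theorem~\ref{baire1} shows this topology is Baire. Theorem~\ref{baire5} upgrades Baire to discrete, so $\{0_G\}$ is a $G_\alpha$-set of $(G,\tau)$, i.e.\ $\psi(G)\leq\alpha$, and Lemma~\ref{psi->w} concludes that $G$ is compact with $w(G)\leq\alpha$.

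For the inductive step from $k=1$ to general $k$, pass to the compact completion $K=\widetilde G$, which inherits exponent $p^k$ by density. Consider $H:=G/(G\cap pK)$: it is $\alpha$-pseudocompact, $d_\alpha$-extremal (Proposition~\ref{quozd}), and of exponent $p$ (it embeds into $K/pK$ as $(G+pK)/pK$). The base case gives $w(H)\leq\alpha$, so $H$ is compact; its image is dense in the compact Hausdorff group $K/pK$, so the continuous bijection is a homeomorphism and $w(K/pK)\leq\alpha$. Pontryagin duality now yields $|\widehat K[p]|=|\widehat{K/pK}|=w(K/pK)\leq\alpha$, while the structure theorem for abelian groups of bounded exponent $p^k$ (direct sums of cyclic $p$-groups $\Z(p^{n_\lambda})$ with $n_\lambda\leq k$) gives $|\widehat K|=|\widehat K[p]|$ in the infinite case and finiteness in the finite case; either way $|\widehat K|\leq\alpha$, so by Fact~\ref{dens1}(c), $w(G)\leq w(K)=|\widehat K|\leq\alpha$. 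The main obstacle is precisely this promotion from $k=1$ to general $k$: because $d_\alpha$-extremality is not obviously inherited by closed subgroups of $G$, a naive induction on closed subgroups is awkward, and I sidestep it by moving to the compact completion and transferring the weight bound from the exponent-$p$ quotient $K/pK$ back up to $K$ via duality.
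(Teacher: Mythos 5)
Your proof is sound in its overall architecture, but one step is stated incorrectly. The claim ``some $G[n_0]$ has non-empty interior and hence equals $G$'' is false as written: a subgroup with non-empty interior is open, but an open subgroup need not be the whole group. For instance, in the compact torsion group $G=\Z(4)\times\Z(2)^{2^\alpha}$ the subgroup $G[2]=\{0,2\}\times\Z(2)^{2^\alpha}$ is open and proper, and it can perfectly well be the first member of the chain $G[1]\subseteq G[2]\subseteq\cdots$ with non-empty interior. The patch is immediate: an open subgroup of a pseudocompact group has finite index (the quotient is discrete and pseudocompact, hence finite), so if $[G:G[n_0]]=m$ then $n_0mG=\{0\}$ and $G$ is still of bounded exponent, which is all your Chinese Remainder reduction needs. (The paper sidesteps this entirely by citing \cite[Lemma 7.4]{CRob} for boundedness; your Baire argument is otherwise fine, since pseudocompact Tychonoff spaces are indeed Baire.)

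With that repaired, the rest is correct and is organized genuinely differently from the paper's proof. The paper argues contrapositively: assuming $w(G)>\alpha$, it decomposes the completion $K=\prod_p t_p(K)$, picks $p$ with $w(t_p(K))>\alpha$, invokes \cite[Lemma 4.1(b)]{DG} (the weight of a bounded $p$-group equals the weight of its exponent-$p$ quotient) to conclude that $G_1=G/\overline{pG}$ has weight $>\alpha$, shows via Lemmas \ref{pgr3}, \ref{baire4}, Theorems \ref{baire1}, \ref{baire5}, Lemma \ref{pgr4}(c) and Lemma \ref{psi->w} that $G_1$ cannot then be $s_\alpha$-extremal, and finally pulls a proper $G_\alpha$-dense subgroup of $G_1$ back to $G$ (Lemma \ref{denspsc}(b)) to exhibit a non-divisible quotient, contradicting $d_\alpha$-extremality. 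You argue directly: CRT reduction to exponent $p^k$, the same exponent-$p$ machinery for the base case, and then a duality argument ($\widehat{K/pK}\cong\widehat K[p]$ plus Pr\"ufer's theorem for bounded groups) to climb from exponent $p$ back up to exponent $p^k$. Your duality step is essentially the content of the paper's citation of \cite[Lemma 4.1(b)]{DG}, used in the opposite direction: the paper pushes large weight down to the exponent-$p$ quotient, you pull small weight up from it. Two further remarks: your base case is marginally cleaner than the paper's, since you use the implication (c)$\Rightarrow$(b) of Theorem \ref{baire5} to get discreteness of $(G,P_\alpha\tau)$ and hence $\psi(G,\tau)\leq\alpha$ outright, whereas the paper goes through $|G_1|\leq 2^\alpha$ and Lemma \ref{pgr4}(c); on the other hand, the paper's contrapositive produces an explicit witness of the failure of $d_\alpha$-extremality, while your version buys self-containedness on the weight-transfer step.
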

\begin{proof}
If $w(G)\leq \alpha$, then $G$ is $\alpha$-extremal by Proposition \ref{ccomp} and Theorem \ref{debestr}.

Suppose that $w(G)>\alpha$.
We prove that there exists $p\in\Prm$ such that $w(G/\overline{p G})>\alpha$. Since $G$ is torsion, then it is bounded-torsion by \cite[Lemma 7.4]{CRob}. Therefore $K=\widetilde G$ is bounded-torsion. Consequently $K$ is topologically isomorphic to $\prod_{p\in\Prm}t_p(K)$, where $t_p(K)=\{x\in K:p^n x=0\ \text{for some}\ n\in\N_+\}$. Since $w(K)=\max_{p\in\Prm}w(t_p(K))$ and $w(K)=w(G)>\alpha$, there exists $p\in\Prm$ such that $w(t_p(K))>\alpha$. Moreover for this $p$ we have $w(t_p(K))=w(K_{(p)})$, where $K_{(p)}=t_p(K)/p t_p(K)$, by \cite[Lemma 4.1(b)]{DG}. Consider the composition $\varphi_p$ of the canonical projections $K\to t_p(K)$ and $t_p(K)\to K_{(p)}$. Since $G$ is dense in $K$, it follows that $\varphi_p(G)$ is dense in $K_{(p)}$ and so $w(\varphi_p(G))=w(K_{(p)})>\alpha$. Moreover there exists a continuous isomorphism $G/(\ker\varphi_p\cap G)\to\varphi_p(G)$. Since $\ker\varphi_p=p K$ and $p K\cap G=\overline{p G}$, there exists a continuous isomorphism $G/\overline{p G}\to \varphi_p(G)$. Hence $w(G/\overline{p G})\geq w(\varphi_p(G))>\alpha$.

Let $G_1=G/\overline{p G}$. We prove that $G_1$ is not $s_\alpha$-extremal. Suppose for a contradiction that $G_1$ is $s_\alpha$-extremal. By Lemma \ref{pgr3} every $h\in\text{Hom}(G_1,\T)$ is $P_\alpha\tau$-continuous, and so $P_\alpha G_1=P_\alpha G_1^\#$ by Lemma \ref{baire4}. By Theorem \ref{baire1} $P_\alpha G_1$ is Baire, hence $|G_1|\leq 2^\alpha$ by Theorem \ref{baire5}. By Lemma \ref{pgr4}(c) $\psi(G_1)\leq\alpha\cdot\log 2^\alpha=\alpha$ and so Lemma \ref{psi->w} implies $w(G_1)=\psi(G_1)\leq\alpha$; this contradicts our assumption.

Then there exists a proper dense $\alpha$-pseudocompact subgroup $D$ of $G_1$. By Corollary \ref{alpha-psc<->G_alpha-dense} $D$ is  $G_\alpha$-dense in $G_1$. Let $\pi:G\to G_1$ be the canonical projection. By Lemma \ref{denspsc}(b) $\pi^{-1}(D)$ is a proper $G_\alpha$-dense subgroup of $G$, then dense $\alpha$-pseudocompact in $G$ by Corollary \ref{alpha-psc<->G_alpha-dense}. Since $G/\pi^{-1}(D)$ is algebraically isomorphic to $G_1/D$, it follows that $G/\pi^{-1}(D)$ is of exponent $p$ and hence not divisible. Therefore $G$ is not $d_\alpha$-extremal and so not $\alpha$-extremal.
\end{proof}

The next lemma gives some conditions equivalent to $\alpha$-singularity for $\alpha$-pseudocompact abelian groups. For $\alpha=\omega$ we find \cite[Lemma 2.5]{AGB1}, which generalized \cite[Lemma 4.1]{DGM}.

\begin{lemma}\label{alpha-singular}
Let $\alpha$ be an infinite cardinal and let $G$ be an $\alpha$-pseudocompact abelian group. Then the following conditions are equivalent:
\begin{itemize}
    \item[(a)]$G$ is $\alpha$-singular;
    \item[(b)]there exists $m\in\mathbb N_+$ such that $G[m]\in\Lambda_\alpha(G)$;
    \item[(c)]$G$ has a torsion closed $G_\alpha$-subgroup;
    \item[(d)]there exists $N\in\Lambda_\alpha(G)$ such that $N\subseteq t(G)$;
    \item[(e)]$\widetilde G$ is $\alpha$-singular.
\end{itemize}
\end{lemma}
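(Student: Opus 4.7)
The plan is to verify the equivalences through a few short implications using Theorem \ref{lambda4}, Corollary \ref{lambda8}, and the classical fact (invoked in the proof of Theorem \ref{torsion} via \cite[Lemma 7.4]{CRob}) that every torsion pseudocompact abelian group is bounded-torsion. The equivalence (c) $\Leftrightarrow$ (d) is just a translation: a torsion closed $G_\alpha$-subgroup of $G$ is precisely an element of $\Lambda_\alpha(G)$ contained in $t(G)$.

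For (b) $\Rightarrow$ (d), take $N=G[m]$, which is torsion since $mN=0$. For (d) $\Rightarrow$ (b), given $N\in\Lambda_\alpha(G)$ with $N\subseteq t(G)$, Corollary \ref{lambda8}(b) makes $N$ $\alpha$-pseudocompact and hence pseudocompact; being torsion and pseudocompact, $N$ is bounded-torsion, so $N\subseteq G[m]$ for some $m\in\N_+$. The subgroup $G[m]$ is closed in $G$ as the kernel of the continuous endomorphism $\mu_m\colon x\mapsto mx$, and Corollary \ref{lambda8}(c) then delivers $G[m]\in\Lambda_\alpha(G)$.

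For the central equivalence (a) $\Leftrightarrow$ (b), consider $\mu_m$, whose kernel is $G[m]$ and whose image is $mG$. Assuming (b), Theorem \ref{lambda4} gives $w(G/G[m])\leq\alpha$, and since $mG$ is a continuous image of $G/G[m]$ we obtain $w(mG)\leq\alpha$, i.e.\ (a). Conversely, assuming (a), $mG$ is an $\alpha$-pseudocompact continuous image of $G$ of weight $\leq\alpha$, hence compact by the observation in the introduction; by Fact \ref{dens1}(b) we may fix a local base $\{U_\beta:\beta<\alpha\}$ at $0$ in $mG$ with $\bigcap_{\beta<\alpha}U_\beta=\{0\}$, which yields
$$G[m]=\mu_m^{-1}(\{0\})=\bigcap_{\beta<\alpha}\mu_m^{-1}(U_\beta)\in\Lambda_\alpha(G).$$

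Finally, for (a) $\Leftrightarrow$ (e): under (a), $mG$ is compact as above, hence closed in $\widetilde G$; since $mG$ is dense in $m\widetilde G$ by continuity of $\mu_m$ on $\widetilde G$ and density of $G$ in $\widetilde G$, we conclude $mG=m\widetilde G$ and so $w(m\widetilde G)\leq\alpha$, giving (e). The reverse is immediate from the monotonicity of weight under taking subspaces, since $mG\subseteq m\widetilde G$. The only mildly delicate point is the verification that $G[m]$ is a $G_\alpha$-set in the direction (a) $\Rightarrow$ (b), but the preimage argument above handles this cleanly.
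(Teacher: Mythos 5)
Your proof is correct and follows essentially the same route as the paper's: both hinge on the multiplication map $x\mapsto mx$, Theorem \ref{lambda4}, Corollary \ref{lambda8}, and the fact (from Comfort--Robertson) that torsion pseudocompact abelian groups are bounded-torsion. The only step stated too loosely is (b)$\Rightarrow$(a): weight can increase under continuous isomorphisms in general, so you must note that $G/G[m]$ is compact (being $\alpha$-pseudocompact of weight $\leq\alpha$), whence the continuous isomorphism $G/G[m]\to mG$ is a topological isomorphism --- this is exactly the point where the paper invokes the open mapping theorem.
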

\begin{proof}
Let $m\in\N_+$ and let $\varphi_m:G\rightarrow G$ be the continuous homomorphism defined by $\varphi_m(x)=m x$ for every $x\in G$. Then $\ker\varphi_m=G[m]$ and $\varphi_m(G)=m G$. Let $i:G/G[m]\to m G$ be the continuous isomorphism such that $i\circ\pi=\varphi_m$, where $\pi:G\to G/G[m]$ is the canonical homomorphism. 

(a)$\Rightarrow$(b) There exists $m\in\N_+$ such that $w(m G)\leq\alpha$. Then $\psi(m G)\leq\alpha$. Since $i:G/G[m]\to m G$ is a continuous isomorphism, so $\psi(G/G[m])\leq\alpha$. This implies that $G[m]$ is a $G_\alpha$-set of $G$; then $G[m]\in\Lambda_\alpha(G)$.

(b)$\Rightarrow$(a) Suppose that $G[m]\in\Lambda_\alpha(G)$. Then the quotient $G/G[m]$ has weight $\leq\alpha$, hence it is compact. By the open mapping theorem the isomorphism $i:G/G[m]\to m G$ is also open and consequently it is a topological isomorphism. Then $w(m G)\leq\alpha$, that is $G$ is $\alpha$-singular.

(b)$\Rightarrow$(c) and (c)$\Leftrightarrow$(d) are obvious.

(d)$\Rightarrow$(b) By Corollary \ref{lambda8} $N$ is $\alpha$-pseudocompact and so $N$ is bounded-torsion by \cite[Lemma 7.4]{CRob}. Therefore there exists $m\in\N_+$ such that $m N=\{0\}$. Thus $N\subseteq G[m]$ and so $G[m]\in\Lambda_\alpha(G)$ by Corollary \ref{lambda8}(a).

(a)$\Leftrightarrow$(e) It suffices to note that $w(m G)=w(m \widetilde G)$ for every $m\in\mathbb N$, because $m G$ is dense in $m \widetilde G$ for every $m\in\mathbb N$.
\end{proof}

The following lemma proves one implication of Corollary \ref{alpha-extremal_mega-theorem} and it is the generalization of \cite[Proposition 4.7]{DGM}.

\begin{lemma}\label{singular->c-extremal}
Let $\alpha$ be an infinite cardinal and let $G$ be an $\alpha$-singular $\alpha$-pseudocompact abelian group. Then $r_0(G/D)=0$ for every $G_\alpha$-dense subgroup $D$ of $G$. In particular $G$ is $c_\alpha$-extremal.
\end{lemma}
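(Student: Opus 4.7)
The proof should be short and mostly algebraic, relying on the two earlier results that translate the hypotheses into usable form. The plan is as follows.

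First, I would apply Lemma \ref{alpha-singular} (the equivalence of (a) and (b)) to the $\alpha$-singular $\alpha$-pseudocompact abelian group $G$: this produces a positive integer $m$ such that the closed subgroup $G[m]$ belongs to $\Lambda_\alpha(G)$. This is exactly the bridge from the algebraic notion of $\alpha$-singularity to a statement about the family $\Lambda_\alpha(G)$, which is what connects with $G_\alpha$-denseness.

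Next, given any $G_\alpha$-dense subgroup $D$ of $G$, I would invoke Lemma \ref{gdeltad1}(a) with $N=G[m]$ to get $D+G[m]=G$. So for every $x\in G$ we can write $x=d+t$ with $d\in D$ and $t\in G[m]$; multiplying by $m$ gives $mx=md\in D$. Hence $mG\subseteq D$, which means that $G/D$ is of exponent (dividing) $m$, and in particular $G/D$ is torsion. Therefore $r_0(G/D)=0$, proving the first assertion.

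For the ``in particular'' clause, let $D$ be any dense $\alpha$-pseudocompact subgroup of $G$. By Corollary \ref{alpha-psc<->G_alpha-dense} such a $D$ is automatically $G_\alpha$-dense in $G$, so the just-established conclusion $r_0(G/D)=0<2^\alpha$ applies, which is precisely the definition of $c_\alpha$-extremality. There is essentially no obstacle here — the whole argument is a direct chain of Lemma \ref{alpha-singular}, Lemma \ref{gdeltad1}(a), and Corollary \ref{alpha-psc<->G_alpha-dense}; the only point to be careful about is distinguishing between ``dense $\alpha$-pseudocompact'' (in the definition of $c_\alpha$-extremality) and ``$G_\alpha$-dense'' (in the statement of the lemma), which is exactly what Corollary \ref{alpha-psc<->G_alpha-dense} lets one interchange.
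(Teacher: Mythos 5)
Your proof is correct; it differs from the paper's mainly in how it reaches the key inclusion $mG\subseteq D$. The paper argues directly from the definition of $\alpha$-singularity: taking $m$ with $w(mG)\leq\alpha$, it notes that $mD$ is a $G_\alpha$-dense subgroup of $mG$, and since $\psi(mG)\leq w(mG)\leq\alpha$ makes every point of $mG$ a $G_\alpha$-set, $G_\alpha$-density forces $mD=mG$, whence $mG=mD\subseteq D$; it invokes neither Lemma \ref{alpha-singular} nor Lemma \ref{gdeltad1}. You instead convert $\alpha$-singularity into $G[m]\in\Lambda_\alpha(G)$ via Lemma \ref{alpha-singular} and convert $G_\alpha$-density into the splitting $G=D+G[m]$ via Lemma \ref{gdeltad1}(a); multiplying by $m$ then yields $mG=mD\subseteq D$ purely algebraically. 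From that point the two proofs coincide: $G/D$ is bounded-torsion of exponent dividing $m$, so $r_0(G/D)=0$, and the ``in particular'' clause is handled by Corollary \ref{alpha-psc<->G_alpha-dense} exactly as you state, since $c_\alpha$-extremality quantifies over dense $\alpha$-pseudocompact subgroups. Neither route is circular, as both lemmas you cite precede this one in the paper. What each approach buys: yours is a formal chain of already-established equivalences, requiring no topological reasoning beyond the citations; the paper's is self-contained, needing only the definition of $\alpha$-singularity and the elementary observation that a $G_\alpha$-dense subgroup of a group of pseudocharacter $\leq\alpha$ must be the whole group.
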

\begin{proof}
By definition there exists a positive integer $m$ such that $w(m G)\leq \alpha$. Let $D$ be a $G_\alpha$-dense subgroup of $G$. Since $m D$ is a $G_\alpha$-dense subgroup of $m G$ and $w(m G)\leq \alpha$, so $m D= m G$. Therefore $m G\leq D$ and hence the quotient $G/D$ is bounded-torsion. In particular $r_0(G/D)=0$.

Moreover $G$ is $c_\alpha$-extremal by the previous part, noting that a $G_\alpha$-dense subgroup $D$ of $G$ is dense $\alpha$-pseudocompact by Corollary \ref{alpha-psc<->G_alpha-dense}.
\end{proof}

For a product $G=\prod_{i\in I}G_i$ of topological groups, let $\Sigma_\alpha G=\{x=(x_i)\in G:|\text{supp}(x)|\leq\alpha\}$ be the \emph{$\alpha$-$\Sigma$-product} of $G$ \cite{DG1}. Then $\Sigma_\alpha G$ is $G_\alpha$-dense in $G$.

\begin{proof}[\emph{\textbf{Proof of Theorem \ref{MegaThm}}}]
(a)$\Rightarrow$(c) Suppose that there exists a continuous surjective homomorphism $\varphi$ of $K$ onto $S^\beta$, where $S$ is a metrizable compact non-torsion abelian group and $\beta>\alpha$. Then $\varphi$ is also open. Note that $S^\beta=(S^\alpha)^\beta$. Let $T=S^\alpha$. Then $D=\Sigma_\alpha T^\beta$ is a $G_\alpha$-dense subgroup of $T^\beta$, so dense $\alpha$-pseudocompact by Corollary \ref{alpha-psc<->G_alpha-dense}, and $D$ trivially intersects the diagonal subgroup $\Delta T^\beta=\{x=(x_i)\in T^\beta: x_i=x_j\ \mbox{for every}\ i,j<\beta\}$, which is topologically isomorphic to $T=S^\alpha$. Consequently $r_0(T^\beta/D)\geq 2^\alpha$. Therefore $T^\beta$ is not $c_\alpha$-extremal and $K$ is not $c_\alpha$-extremal as well by Proposition \ref{quozd}.

(b)$\Rightarrow$(a) is Lemma \ref{singular->c-extremal} and (b)$\Leftrightarrow$(c) is \cite[Theorem 1.6]{DG1}.
\end{proof}

\subsection{Proof of the main theorem}\label{proof}

\begin{proposition}\label{non-sing->big_free-rank}
Let $\alpha$ be an infinite cardinal. If $G$ is a non-$\alpha$-singular $\alpha$-pseudocompact abelian group, then $r_0(N)=r_0(G)\geq 2^\alpha$ for every $N\in\Lambda_\alpha(G)$.
\end{proposition}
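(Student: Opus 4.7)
The plan is to exploit the structural consequence of non-$\alpha$-singularity given by Theorem \ref{MegaThm}. By Lemma \ref{alpha-singular}(a)$\Leftrightarrow$(e), the completion $K:=\widetilde G$ is itself non-$\alpha$-singular, so by Theorem \ref{MegaThm}(b)$\Rightarrow$(c) there is a continuous surjective homomorphism $\varphi:K\twoheadrightarrow S^\beta$ with $S$ metrizable compact non-torsion and $\beta>\alpha$. Since $S$ is non-torsion, $\widehat S$ contains an element of infinite order, i.e.\ a character $S\to \T$ whose image is a closed non-torsion, hence surjective, subgroup of $\T$; composing coordinate-wise I may assume $\varphi:K\twoheadrightarrow \T^\beta$.

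Now fix $N\in \Lambda_\alpha(G)$ and set $M:=\overline N^K$. First I verify $M\in\Lambda_\alpha(K)$: Theorem \ref{lambda4} gives that $G/N$ is compact with $w(G/N)\le \alpha$, and the restriction to $G$ of the quotient map $K\to K/M$ identifies $K/M$ with the completion of $G/N$, which is $G/N$ itself. Lemma \ref{gdeltad1}(b) applied with $D=G$ (which is $G_\alpha$-dense in $K$ by Theorem \ref{cr-alpha}) and $M\in\Lambda_\alpha(K)$ then gives that $N=G\cap M$ is $G_\alpha$-dense in $M$.

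Pontryagin duality inside $\T^\beta$ pins down $\varphi(M)$: the quotient $\T^\beta/\varphi(M)$ is a continuous image of $K/M\cong G/N$, so has weight $\le \alpha$. By Fact \ref{dens1}(c) the annihilator $\varphi(M)^\perp\le \widehat{\T^\beta}=\Z^{(\beta)}$ has cardinality $\le\alpha$; since each element of $\Z^{(\beta)}$ has finite support, $\varphi(M)^\perp$ is supported on some $J\subseteq\beta$ with $|J|\le\alpha$, hence
\[
\varphi(M)\supseteq (\Z^{(J)})^\perp=\{x\in \T^\beta:x_j=0\text{ for all }j\in J\}\cong \T^{\beta\setminus J},
\]
with $|\beta\setminus J|=\beta>\alpha$. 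Picking any $I_0\subseteq \beta\setminus J$ of cardinality $\alpha$, the composition $\psi:=p_{I_0}\circ \varphi|_M:M\to \T^\alpha$ is a continuous surjection of compact groups. Since $N$ is $G_\alpha$-dense in $M$, $\psi(N)$ is $G_\alpha$-dense in $\T^\alpha$; but $w(\T^\alpha)=\alpha$, so every singleton is a $G_\alpha$-set and $\psi(N)=\T^\alpha$.

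Consequently $N$ surjects onto $\T^\alpha$, and since $\Z\hookrightarrow \T$ (via an irrational rotation) yields $\Z^\alpha\hookrightarrow \T^\alpha$ with $r_0(\Z^\alpha)=2^\alpha$, exactness of $\otimes \Q$ gives $r_0(N)\ge r_0(\T^\alpha)\ge 2^\alpha$. Finally, additivity of the torsion-free rank in the short exact sequence $0\to N\to G\to G/N\to 0$, together with $r_0(G/N)\le |G/N|\le 2^\alpha\le r_0(N)$, yields $r_0(G)=r_0(N)$. The principal subtlety is that I cannot directly invoke non-$\alpha$-singularity of $M=\widetilde N$ itself (that would amount to using the $c_\alpha$-extremal direction of Corollary \ref{alpha-extremal_mega-theorem}, which depends on this proposition); the argument therefore stays at the level of the completion $K$ and descends to $N$ only through the $G_\alpha$-density of $N$ in $M$.
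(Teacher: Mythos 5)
Your overall architecture (pass to the completion $K$, use Theorem \ref{MegaThm}(b)$\Rightarrow$(c), then transfer the rank down to $N$ via $M=\overline{N}^K\in\Lambda_\alpha(K)$ and the $G_\alpha$-density of $N$ in $M$) is sound, but your very first reduction is false. You claim that since $S$ is non-torsion, $\widehat S$ contains an element of infinite order, so that $\varphi$ may be replaced by a surjection onto $\T^\beta$. A compact abelian group is non-torsion precisely when its dual is \emph{unbounded}, not when its dual has an element of infinite order. Concretely, take $S=\prod_{n\in\N}\Z(p^n)$: it is metrizable, compact and non-torsion (the element $(1,1,1,\dots)$ has infinite order), yet $\widehat S=\bigoplus_{n\in\N}\Z(p^n)$ is torsion, so every character of $S$ has finite image and $S$ admits no surjection onto $\T$. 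The defect is not merely one of justification: for this $S$ no surjection $S^\beta\to\T^\beta$ can exist at all, since dually it would embed the torsion-free group $\Z^{(\beta)}$ into the torsion group $\widehat{S}^{(\beta)}$. So, as written, the proof breaks at this step.

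Fortunately the error is local: everything after it goes through with $S$ kept in place of $\T$. The dual of $S^\beta$ is $\widehat{S}^{(\beta)}$, whose elements have finite support, so your annihilator argument (with $|\varphi(M)^\perp|\le\alpha$ by Fact \ref{dens1}(c)) gives $\varphi(M)\supseteq\{x\in S^\beta:x_j=0\ \text{for all}\ j\in J\}\cong S^{\beta\setminus J}$ for some $J$ with $|J|\le\alpha$; hence $N$ maps onto $S^{I_0}$ with $|I_0|=\alpha$ by $G_\alpha$-density, and $r_0(S^{I_0})\ge r_0(\Z^{I_0})=2^\alpha$ because $S$, being non-torsion, contains a copy of $\Z$. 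With this repair your proof is correct and genuinely different from the paper's in its second half: the paper never introduces $M$ or duality. It first gets $r_0(G)\ge 2^\alpha$ by composing $\varphi$ with the projection $S^I\to S^J$, $|J|=\alpha$, and observing that the image of $G$ is dense and compact (an $\alpha$-pseudocompact group of weight $\le\alpha$), hence equals $S^J$; it then shows that $N$ itself is non-$\alpha$-singular --- directly from Lemma \ref{alpha-singular} and Corollary \ref{lambda8}, with no reference to $c_\alpha$-extremality --- and applies the first part to $N$, finishing with $r_0(G)=r_0(G/N)\cdot r_0(N)$ exactly as you do. Your closing worry about circularity is therefore unfounded: non-$\alpha$-singularity of $N$ is a statement about torsion closed $G_\alpha$-subgroups, not about Corollary \ref{alpha-extremal_mega-theorem}, so the paper's shorter route was available to you as well.
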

\begin{proof}
First we prove that $r_0(G)\geq 2^\alpha$. Since $G$ is non-$\alpha$-singular, then $K=\widetilde G$ is non-$\alpha$-singular as well by Lemma \ref{alpha-singular}. By Theorem \ref{MegaThm} there exists a continuous surjective homomorphism $\varphi:K\to S^I$, where $S$ is a metrizable compact non-torsion abelian group and $|I|>\alpha$. Let $J$ be a subset of $I$ of cardinality $\alpha$ and consider the composition $\phi$ of $\varphi$ with the canonical projection $S^I\to S^J$. The group $S^J$ has weight $\alpha$ and free rank $2^\alpha$. Since $G$ is dense in $K$, it follows that $\phi(G)$ is dense in $S^J$. But $G$ is $\alpha$-pseudocompact and so $\phi(G)$ is compact. Therefore $\phi(G)=S^J$ and hence $r_0(G)\geq 2^\alpha$.

Let $N\in\Lambda_\alpha(G)$. Then $N$ is $\alpha$-pseudocompact by Corollary \ref{lambda8}. Since $G$ is non-$\alpha$-singular, so $N$ is non-$\alpha$-singular. In fact, by Lemma \ref{alpha-singular} if $m\in\N_+$ then $G[m]\not\in\Lambda_\alpha(G)$. By Corollary \ref{lambda8}(a) if $m\in\N_+$ then $N[m]\not\in\Lambda_\alpha(G)$, so $N[m]\not\in \Lambda_\alpha(N)$ by Corollary \ref{lambda8}(b) and hence $N$ is non-$\alpha$-singular by Lemma \ref{alpha-singular}. 
Then $r_0(N)\geq 2^\alpha$ by the first part of the proof. Moreover $r_0(G)=r_0(G/N)\cdot r_0(N)$. By Theorem \ref{lambda4} $w(G/N)\leq\alpha$ and so $G/N$ is compact with $|G/N|\leq 2^\alpha$. Hence $r_0(G)=r_0(N)$.
\end{proof}

The next lemma shows that $c_\alpha$-extremality is hereditary for $\alpha$-pseu\-do\-com\-pact subgroups that are sufficiently large. This is the generalization to $\alpha$-pseudocompact groups of \cite[Theorem 4.11]{DGM}.

\begin{lemma}\label{str_c_extr}
Let $\alpha$ be an infinite cardinal and let $G$ be a $c_\alpha$-extremal pseudocompact abelian group. Then every $\alpha$-pseudocompact subgroup of $G$ of index $\leq 2^\alpha$ is $c_\alpha$-extremal. In particular, every $N\in\Lambda_\alpha(G)$ is $c_\alpha$-extremal.
\end{lemma}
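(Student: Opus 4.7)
The plan is to lift $D$ to a $G_\alpha$-dense subgroup $D^{\ast}$ of $G$ by attaching a small set of coset representatives, apply the $c_\alpha$-extremality of $G$ to $D^{\ast}$, and then transfer the rank bound back to $H/D$ via the induced quotient map.

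Concretely, let $H\leq G$ be $\alpha$-pseudocompact with $[G:H]\leq 2^\alpha$, and let $D$ be a $G_\alpha$-dense subgroup of $H$. Choose $X\subseteq G$ of cardinality $\leq 2^\alpha$ meeting every coset of $H$ in $G$, set $E=\langle X\rangle$, and let $D^{\ast}=D+E$. Then $|E|\leq 2^\alpha$ and $G=H+E$. To see that $D^{\ast}$ is $G_\alpha$-dense in $G$, fix $N\in\Lambda_\alpha(G)$; since $N\cap H\in\Lambda_\alpha(H)$, the $G_\alpha$-density of $D$ in $H$ combined with Lemma~\ref{gdeltad1}(a) yields $D+(N\cap H)=H$, so $D^{\ast}+N\supseteq H+E=G$, and Lemma~\ref{gdeltad1}(a) applied to $G$ closes the argument. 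By Corollary~\ref{alpha-psc<->G_alpha-dense}, $D^{\ast}$ is dense $\alpha$-pseudocompact in $G$, whence $r_0(G/D^{\ast})<2^\alpha$ by $c_\alpha$-extremality of $G$. Since $D\leq H$ and $G=H+E$, the composition $H\hookrightarrow G\twoheadrightarrow G/D^{\ast}$ is surjective with kernel $H\cap D^{\ast}=D+(E\cap H)$, so $G/D^{\ast}\cong H/(D+(E\cap H))$ and $r_0(H/(D+(E\cap H)))<2^\alpha$. From the short exact sequence
\[
0\to (D+(E\cap H))/D\to H/D\to H/(D+(E\cap H))\to 0
\]
and additivity of $r_0$, bounding $r_0(H/D)<2^\alpha$ reduces to showing $r_0((E\cap H)/(E\cap D))<2^\alpha$.

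This last bound is the real obstacle: the crude estimate $|(E\cap H)/(E\cap D)|\leq|E|\leq 2^\alpha$ only gives $r_0\leq 2^\alpha$, which is not strict. I would address it by making a structured choice of $X$: take $X_0\subseteq G$ lifting a maximal $\mathbb{Z}$-independent subset of $G/H$ (so $\langle X_0\rangle\cap H=0$, killing the free contribution), and $X_1$ lifting a generating set of the torsion of $G/(H+\langle X_0\rangle)$, so that $E\cap H=\langle X_1\rangle\cap H$ absorbs only the torsion obstruction to splitting the sequence $0\to H\to G\to G/H\to 0$. Combined with the bound $r_0(G)\leq 2^\alpha$ supplied by Theorem~\ref{G>c2} and the fact that for the infinite cardinal $2^\alpha$ the sum of two cardinals strictly below $2^\alpha$ is again strictly below $2^\alpha$, this should yield the desired strict inequality; if a stubborn torsion-lifted piece remains, one enlarges $D$ to the $G_\alpha$-dense subgroup $D+(E\cap H)$ of $H$ and iterates, absorbing the obstruction. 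Finally, the ``in particular'' clause is immediate: for every $N\in\Lambda_\alpha(G)$, Corollary~\ref{lambda8}(b) provides that $N$ is $\alpha$-pseudocompact, and Theorem~\ref{lambda4} gives $|G/N|\leq 2^{w(G/N)}\leq 2^\alpha$, so the first part of the lemma applies.
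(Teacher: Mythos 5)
The first half of your argument is sound: $D^{\ast}=D+E$ is indeed $G_\alpha$-dense in $G$, the modular law gives $H\cap D^{\ast}=D+(E\cap H)$, and by additivity of $r_0$ everything reduces to producing some $E$ with $r_0\bigl((E\cap H)/(E\cap D)\bigr)<2^\alpha$. But that step, which you rightly call the real obstacle, is not a loose end --- it \emph{is} the lemma. Since $(D+(E\cap H))/D$ is a subgroup of $H/D$, the conclusion $r_0(H/D)<2^\alpha$ implies your error bound for every $E$; conversely, your exact sequence shows that the error bound for a single $E$ implies the conclusion. So, modulo the hypothesis on $G$, the two are equivalent, and your reduction is circular in disguise. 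Neither proposed repair breaks the circle. The identity $E\cap H=\langle X_1\rangle\cap H$ fails in general: an element $u+v$ with $u\in\langle X_0\rangle$, $v\in\langle X_1\rangle$ can lie in $H$ without $u,v\in H$, since the images of $\langle X_0\rangle$ and $\langle X_1\rangle$ in $G/H$ need not intersect trivially. More seriously, the torsion part of $G/H$ is exactly where the difficulty lives: if $G/H$ has a subgroup $\bigoplus_{i<2^\alpha}\Z(2)$ whose generators lift to elements $x_i$ with $2x_i=b_i+2h$, where $\{b_i\}_{i<2^\alpha}$ is an independent family of non-torsion elements of $H$, then \emph{every} admissible $E$ contains some $e_i\in x_i+H$ and hence contains the independent non-torsion family $\{2e_i\}_{i<2^\alpha}\subseteq E\cap H$; whether its image in $H/D$ has free rank $<2^\alpha$ depends on the given $D$ (one would need $b_i\in D+2H$ for all $i$), not on your choice of lifts, and nothing in your hypotheses rules out the bad case. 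Finally, the iteration attacks the wrong quantity: replacing $D$ by $D+(E\cap H)$ bounds the rank of $H$ modulo the \emph{larger} subgroup, while the defect $r_0\bigl((D+(E\cap H))/D\bigr)$ --- the thing you actually need --- is left untouched.

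The paper sidesteps all of this by proving the contrapositive, where the cardinal arithmetic runs in the right direction. Assume $H$ (called $N$ there) is not $c_\alpha$-extremal and pick a $G_\alpha$-dense $D\leq N$ with $r_0(N/D)\geq 2^\alpha\geq |G/N|$. Because the free rank now \emph{dominates} the index, the purely algebraic result \cite[Corollary 4.9]{DGM} applies: there is $L\leq G/D$ with $L+N/D=G/D$ and $r_0\bigl((G/D)/L\bigr)\geq r_0(N/D)$; intuitively, one can sacrifice $|G/N|$-many members of an independent family of size $r_0(N/D)$ to absorb all the cosets and still keep $2^\alpha$ of them. Then $D_1=\pi^{-1}(L)$ contains $D$ and satisfies $D_1+N=G$, hence is $G_\alpha$-dense in $G$, and $r_0(G/D_1)\geq 2^\alpha$ contradicts the $c_\alpha$-extremality of $G$. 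That slack --- rank at least as large as the index --- is precisely what your direct approach lacks: your error term must be $<2^\alpha$, but it is only ever bounded by $|E|\leq 2^\alpha$, with nothing to spare. (Your handling of the ``in particular'' clause, via Corollary \ref{lambda8} and Theorem \ref{lambda4}, is correct and matches the paper.)
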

\begin{proof}
Aiming for a contradiction, assume that there exists an $\alpha$-pseu\-do\-com\-pact subgroup $N$ of $G$ with $|G/N|\leq 2^\alpha$ such that $N$ is not $c_\alpha$-extremal. Then there exists a dense $\alpha$-pseudocompact subgroup $D$ of $N$ with $r_0(N/D)\geq 2^\alpha$. Therefore $|G/N|\leq r_0(N/D)$. By \cite[Corollary 4.9]{DGM} there exists a subgroup $L$ of $G/D$ such that $L+N/D=G/D$ and $r_0((G/D)/L)\geq r_0(N/D)$. Let $\pi:G\rightarrow G/D$ be the canonical projection and $D_1=\pi^{-1}(L)$. Then $N+D_1=G$. Since $D$ is $G_{\alpha}$-dense in $N$ by Corollary \ref{alpha-psc<->G_alpha-dense}, so $\overline{D_1}^{P_\alpha G}\supseteq N+D_1=G$ and so $D_1$ is $G_{\alpha}$-dense in $G$; equivalently $D_1$ is dense $\alpha$-pseudocompact in $G$ by Corollary \ref{alpha-psc<->G_alpha-dense}. Since $G/\pi^{-1}(L)$ is algebraically isomorphic to $(G/D)/L$, it follows that $r_0(G/D_1)=r_0((G/D)/L)\geq r_0(N/D)\geq 2^\alpha$. We have produced a $G_{\alpha}$-dense subgroup $D_1$ of $G$ with $r_0(G/D_1)\geq 2^\alpha$, a contradiction.

If $N\in\Lambda_\alpha(G)$ then $w(G/N)\leq\alpha$ and so $G/N$ is compact. Hence $|G/N|\leq 2^\alpha$. So $N$ is $c_\alpha$-extremal by the previous part of the proof.
\end{proof}

\begin{lemma}\label{CvM-Lemma}\emph{\cite[Lemma 3.2]{CvM2}}
Let $\alpha$ be an infinite cardinal and suppose that $\mathcal A$ is a family of subsets of $2^\alpha$ such that:
\begin{itemize}
	\item[(1)]for $\mathcal B\subseteq\mathcal A$ such that $|\mathcal B|\leq\alpha$, $\bigcap_{B\in\mathcal B}B\in\mathcal A$ and
	\item[(2)]each element of $\mathcal A$ has cardinality $2^\alpha$.
\end{itemize}
Then there exists a countable infinite family $\mathcal B$ of subsets of $2^\alpha$ such that:
\begin{itemize}
	\item[(a)]$B_1\cap B_2=\emptyset$ for every $B_1,B_2\in\mathcal B$ and
	\item[(b)]if $A\in\mathcal A$ and $B\in\mathcal B$, then $|A\cap B|=2^\alpha$.
\end{itemize}
\end{lemma}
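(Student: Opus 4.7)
\emph{Plan.} The plan is to construct the $B_n$ simultaneously by a transfinite recursion of length $2^\alpha$. At stage $\xi < 2^\alpha$, pick for each $n \in \mathbb N$ a new point $x_{n,\xi}$, distinct from all previously chosen points, and set $B_n := \{x_{n,\xi} : \xi < 2^\alpha\}$. Since only $\aleph_0 \cdot |\xi| < 2^\alpha$ points have been chosen by stage $\xi$ and each $A \in \mathcal A$ has cardinality $2^\alpha$ by (2), fresh points are always available; moreover, thanks to (1), one can restrict these fresh points to lie inside any prescribed intersection $\bigcap_{\lambda<\alpha} A_\lambda \in \mathcal A$, which again has cardinality $2^\alpha$ by (2). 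The disjointness condition (a) is automatic from the distinctness of the chosen points.

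To force condition (b), I would introduce a bookkeeping device that ensures, for every $A \in \mathcal A$ and every $n \in \mathbb N$, that $x_{n,\xi} \in A$ for $2^\alpha$ many $\xi$. First, isolate a cofinal subfamily $\mathcal A^* \subseteq \mathcal A$ of cardinality at most $2^\alpha$ (cofinal under reverse inclusion, so that every $A \in \mathcal A$ contains some $A^* \in \mathcal A^*$); this is extracted from $\mathcal A$ by a greedy recursion of length $\leq 2^\alpha$, using property (1) to ensure all intermediate intersections stay in $\mathcal A$ and thus have cardinality $2^\alpha$ by (2). Next, enumerate $\mathcal A^* \times \mathbb N$ as a list of ``tasks'' repeated $2^\alpha$ times each, and assign tasks to stages $\xi < 2^\alpha$. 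At the stage assigned to the task $(A^*, n)$, choose $x_{n,\xi} \in A^* \setminus T_\xi$, where $T_\xi$ denotes the set of previously chosen points (so $|T_\xi| < 2^\alpha$ and $|A^* \setminus T_\xi| = 2^\alpha$). Remaining $x_{m,\xi}$ at this stage are picked freely off $T_\xi$. Then $|B_n \cap A| \geq |B_n \cap A^*| = 2^\alpha$ for every $A \in \mathcal A$, which is condition (b).

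The main obstacle is the construction of the cofinal subfamily $\mathcal A^*$ of cardinality $\leq 2^\alpha$: in principle $|\mathcal A|$ can be as large as $2^{2^\alpha}$, and one must use the closure property (1) together with the cardinality hypothesis (2) to cut it down while preserving cofinality. If this reduction proves delicate, an alternative is a priority-style recursion that handles $\mathcal A$ directly without a preliminary reduction, exploiting the $\alpha^+$-completeness of the filter generated by $\mathcal A$ to interleave countably many tasks at each stage. In either approach, once the bookkeeping is arranged, the verification of (a) and (b) reduces to the elementary cardinality count ``$\aleph_0\cdot |\xi|<2^\alpha$'' that makes the recursion go through.
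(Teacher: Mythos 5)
Your construction stands or falls with the claim that $\mathcal A$ admits a downward-cofinal subfamily $\mathcal A^*$ of cardinality at most $2^\alpha$, and that claim is false in general, so this is a genuine gap rather than a fixable technicality. Take any $\alpha$ for which $2^\alpha$ is regular (under GCH this holds for every $\alpha$) and let $\mathcal A$ be the family of all closed unbounded subsets of the ordinal $2^\alpha$. Since $\mathrm{cf}(2^\alpha)>\alpha$, the intersection of $\leq\alpha$ many clubs is a club, so (1) holds; every club in a regular uncountable cardinal has full cardinality, so (2) holds. Yet no family $\{C_i : i<2^\alpha\}$ of clubs is downward-cofinal: letting $C_i'$ be the set of limit points of $C_i$ and $D=\{\xi<2^\alpha : \xi\in C_i'\ \text{for all}\ i<\xi\}$ the diagonal intersection (a club), the inclusion $C_i\subseteq D$ would force every element of $C_i$ above $i$ to be a limit point of $C_i$, which fails because the non-limit points of a club are unbounded. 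So your bookkeeping cannot even be set up; worse, for this $\mathcal A$ your recursion can genuinely produce a bad output, since at the stage assigned to $(C_i,n)$ the fresh point may be drawn from $C_i\setminus D$ (a set of size $2^\alpha$), so nothing prevents $B_n\cap D=\emptyset$, violating (b). Note also that your ``greedy'' extraction of $\mathcal A^*$ would need intersections of up to $2^\alpha$ many members of $\mathcal A$, whereas (1) only licenses $\leq\alpha$ many; and the fallback ``priority-style recursion that handles $\mathcal A$ directly'' is not an argument: $\mathcal A$ may have $2^{2^\alpha}$ members, so they cannot be enumerated as tasks along a recursion of length $2^\alpha$ --- that is precisely the difficulty the lemma addresses.

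For comparison, the paper does not prove this statement at all: it quotes it as \cite[Lemma 3.2]{CvM2}, and the known proof avoids any cofinality reduction by handling all of $\mathcal A$ at once. It suffices to produce a single set $B$ with $|A\cap B|=|A\setminus B|=2^\alpha$ for every $A\in\mathcal A$; applying this repeatedly to the relativized families $\{A\setminus(B_0\cup\dots\cup B_n) : A\in\mathcal A\}$, which again satisfy (1) and (2), yields the countable disjoint family. If no such $B$ existed, consider $\mathcal I=\{X\subseteq 2^\alpha : |A\cap X|<2^\alpha\ \text{for some}\ A\in\mathcal A\}$: by (1), (2) and $\mathrm{cf}(2^\alpha)>\alpha$ this is a proper $\alpha^+$-complete ideal, and the failure of splitting means that for every $X$ either $X\in\mathcal I$ or $2^\alpha\setminus X\in\mathcal I$, so the dual of $\mathcal I$ is an $\alpha^+$-complete free ultrafilter on $2^\alpha$. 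But no such ultrafilter exists: identifying $2^\alpha$ with ${}^\alpha 2$, for each $\beta<\alpha$ the ultrafilter contains exactly one of $\{f : f(\beta)=0\}$ and $\{f : f(\beta)=1\}$, and by $\alpha^+$-completeness the intersection of these $\alpha$ many chosen sets, a singleton, lies in the ultrafilter, making it principal --- a contradiction. This dichotomy argument is what replaces, and cannot be replaced by, your stage-by-stage bookkeeping.
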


Now we can prove our main results.

\begin{proof}[\emph{\textbf{Proof of Corollary \ref{alpha-extremal_mega-theorem}}}]
If $G$ is $\alpha$-singular then $G$ is $c_\alpha$-extremal by Lem\-ma \ref{singular->c-extremal}.

Suppose that $G$ is $c_\alpha$-extremal and assume for a contradiction that $G$ is not $\alpha$-singular. By Theorem \ref{G>c2} $r_0(G)\leq 2^\alpha$ and by Proposition \ref{non-sing->big_free-rank} $r_0(G)\geq 2^\alpha$. Hence $r_0(G)=2^\alpha$. Let $D(G_1)=\Q^{(S)}$, with $|S|=2^\alpha$, be the divisible hull of the torsion-free quotient $G_1=G/t(G)$, where $t(G)$ denotes the subgroup of all torsion elements of $G$. Let $\pi:G\to D(G_1)$ be the composition of the canonical projection $G\to G_1$ and the inclusion map $G_1 \hookrightarrow D(G_1)$.

For a subset $A$ of $S$ let $$G(A)=\pi^{-1}\left(\Q^{(A)}\right)\text{ and }\mathcal A=\{A\subseteq S: G(A)\ \text{contains some}\ N\in\Lambda_\alpha(G)\}.$$ Then $\mathcal A$ has the property that for $\mathcal B\subseteq\mathcal A$ such that $|\mathcal B|\leq\alpha$, $\bigcap_{B\in\mathcal B}B\in\mathcal A$; and $|A|= 2^\alpha$ for all $A\in\mathcal A$, as $r_0(N)=2^\alpha$ for every $N\in \Lambda_\alpha(G)$ by Proposition \ref{non-sing->big_free-rank} and $r_0(G)=2^\alpha$. 
By Lemma \ref{CvM-Lemma} there exists a partition $\{P_n\}_{n\in\N}$ of $S$ such that $|A\cap P_n|= 2^\alpha$ for every $A\in\mathcal A$ and for every $n\in\N$. Define $V_n=G(P_0\cup\ldots\cup P_n)$ for every $n\in\N$ and note that $G=\bigcup_{n\in\N} V_n$. By Lemma \ref{cconn0} there exist $m\in\N$ and $N\in\Lambda(G)$ such that $D=V_m\cap N$ is $G_\alpha$-dense in $N$, so dense $\alpha$-pseudocompact in $N$ by Corollary \ref{alpha-psc<->G_alpha-dense}. By Lemma \ref{str_c_extr}, to get a contradiction it suffices to show that $r_0(N/D)=2^\alpha$.

Let $F$ be a torsion-free subgroup of $N$ such that $F\cap D=\{0\}$ and maximal with this property. Suppose for a contradiction that $|F|=r_0(N/D)<2^\alpha$. So $\pi(F)\subseteq \Q^{(S_1)}$ for some $S_1\subseteq S$ with $|S_1|< 2^\alpha$ and $W=P_0\cup\ldots\cup P_m\cup S_1$ has $|W\cap P_{m+1}|<2^\alpha$. Consequently $W\not\in\mathcal A$ and so $N \not\subseteq G(W)$. Note that $G/G(W)$ is torsion-free, because if $x\in G$ and $m x\in G(W)$ for some $m\in\mathbb N_+$, then $m \pi(x)=\pi(m x)\in \mathbb Q^{(W)}$, so $\pi(x)\in \mathbb Q^{(W)}$ and hence $x\in G(W)$.  Take $x\in N\setminus G(W)$. Since $G/G(W)$ is torsion-free, $\langle x\rangle\cap G(W)=\{0\}$ and $x$ has infinite order. But $D+F\subseteq G(W)$ and so $\langle x\rangle \cap (D+F)=\{0\}$, that is $(F+\langle x\rangle)\cap D=\{0\}$. This contradicts the maximality of $F$.
\end{proof}

\begin{proof}[\emph{\textbf{Proof of Theorem \ref{alpha-extremal-solution}}}]
(a)$\Rightarrow$(c) If $G$ is $\alpha$-extremal, in particular it is $c_\alpha$-extremal and so $\alpha$-singular by Corollary \ref{alpha-extremal_mega-theorem}.

Suppose that $w(G)>\alpha$. Since $G$ is $\alpha$-singular, by Lemma \ref{alpha-singular} there exists $m\in\N_+$ such that $w(m G)\leq\alpha$; in particular $m G$ is compact and so closed in $G$. Since $w(m G)\leq\alpha$ and $w(G)=w(G/m G)\cdot w(m G)$, it follows that $w(G/m G)=w(G)>\alpha$. Then $G/m G$ is not $\alpha$-extremal by Theorem \ref{torsion} and so $G$ is not $\alpha$-extremal by Proposition \ref{quozd}.

(c)$\Rightarrow$(b) is Proposition \ref{ccomp} and (b)$\Rightarrow$(a) is Theorem \ref{debestr}.
\end{proof}

The following example shows that $c_\alpha$- and $d_\alpha$-extremality cannot be equivalent conditions in Theorem \ref{alpha-extremal-solution}.
Item (a) shows that $\alpha$-singular $\alpha$-pseudocompact abelian groups need not be $d_\alpha$-extremal and also that there exists a $c_\alpha$-extremal (non-compact) $\alpha$-pseudocompact abelian group of we\-ight $>\alpha$, which is not $d_\alpha$-extremal. It is the analogous of \cite[Example 4.4]{DGM}. In item (b) we give an example of a non-$c_\alpha$-extremal $d_\alpha$-extremal $\alpha$-pseu\-do\-com\-pact abelian group of weight $>\alpha$.

\begin{example}\label{Example-cd}
Let $\alpha$ be an infinite cardinal.
\begin{itemize}
	\item[(a)]Let $p\in\Prm$ and let $H$ be the subgroup of $\Z(p)^{2^\alpha}$ defined by $H=\Sigma_\alpha\Z(p)^{2^\alpha}$. Then $H$ is $\alpha$-pseudocompact by Corollary \ref{alpha-psc<->G_alpha-dense}, because it is $G_\alpha$-dense in $\Z(p)^{2^\alpha}$. The group $G=\T^\alpha\times H$ is an $\alpha$-singular (so $c_\alpha$-extremal by Lemma \ref{singular->c-extremal}) $\alpha$-pseudocompact abelian group with $r_0(G)=2^\alpha$ and $w(G)=2^\alpha>\alpha$. Thus $G$ is not $d_\alpha$-extremal by Theorem \ref{alpha-extremal-solution}.
	\item[(b)]Let $G=\T^{2^\alpha}$. Then $G$ is an $\alpha$-pseudocompact divisible abelian group $G$ of weight $>\alpha$. So $G$ is $d_\alpha$-extremal of weight $>\alpha$. We can prove that $G$ is not $c_\alpha$-extremal as in the proof of Theorem \ref{MegaThm}, that is, noting that $G=(\T^\alpha)^{2^\alpha}=T^{2^\alpha}$ and $\Sigma_\alpha T^{2^\alpha}$ is a $G_\alpha$-dense subgroup of $T^{2^\alpha}$ such that $r_0(T^{2^\alpha}/\Sigma T^{2^\alpha})\geq 2^\alpha$. That $G$ is not $c_\alpha$-extremal follows also from Theorem \ref{G>c2}, because $G$ has free rank $> 2^\alpha$.
\end{itemize}
\end{example}

\subsection*{Acknowledgments}

I am grateful to Professor Dikran Dikranjan for his helpful comments and suggestions. I thank also Professor Hans Weber and the referee for their observations and suggestions.

\noindent Author: Anna Giordano Bruno\\
Address: Dipartimento di Matematica e Informatica, Universit\`a di Udine, Via delle Scienze, 206 - 33100 Udine, Italy \\
E-mail address: {\tt anna.giordanobruno@dimi.uniud.it}


\begin{thebibliography}{99}


\bibitem{Br} J. Braconnier, \emph{Sur les groupes topologiques localement compacts} (French), J. Math. Pures Appl. (9) \textbf{27} (1948), 1--85.

\bibitem{C} W.~W. Comfort, \emph{Tampering with pseudocompact groups}, Topology Proc. \textbf{28} (2004) no. 2, 401--424.

\bibitem{C1} W.~W. Comfort, \emph{Pseudocompact groups: progress and problems}, Topology and its Appl. \textbf{155} (2008), 172-–179.

\bibitem{CGal1} W.~W. Comfort and J. Galindo, \emph{Pseudocompact topological group refinements of maximal weight}, Proc. Amer. Math. Soc. \textbf{131} (2003), 1311--1320.

\bibitem{CGal2} W.~W. Comfort and J. Galindo, \emph{Extremal pseudocompact topological groups}, J. Pure Appl. Algebra \textbf{197} (2005) no. 1--3, 59--81.

\bibitem{CGvM} W.~W. Comfort, H. Gladdines, and J. van Mill, \emph{Proper pseudocompact subgroups of pseudocompact {A}belian groups}, Papers on general topology and applications, Ann. New York Acad. Sci. \textbf{728} (1994), 237--247. 

\bibitem{CRob2} W.~W. Comfort and L.~C. Robertson, \emph{Proper pseudocompact extensions of compact Abelian group topologies}, Proc. Amer. Math. Soc. \textbf{86} (1982), 173--178.

\bibitem{CRob1} W. W. Comfort and L. C. Robertson, \emph{Cardinality constraints for pseudocompact and for totally dense subgroups of compact topological groups}, Pacific J. Math. \textbf{119} (1985), 265--285.

\bibitem{CRob} W.~W. Comfort and L.~C. Robertson, \emph{Extremal phenomena in certain classes of totally bounded groups}, Dissertationes Math. (Rozprawy Mat.) \textbf{272} (1988), 48 pp. 

\bibitem{CR} W. W. Comfort and K. Ross, \emph{Pseudocompactness and uniform continuity in topological groups}, Pacific J. Math. \textbf{16} (1966), 483--496.

\bibitem{CSou} W. W. Comfort and T. Soundararajan, \emph{Pseudocompact group topologies and totally dense subgroups}, Pacific J. Math. {\bf 100} (1982), 61--84.

\bibitem{CvM} W.~W. Comfort and J. van Mill, \emph{Concerning connected, pseudocompact abelian groups}, Topology and Its Appl. \textbf{33} (1989), 21--45.

\bibitem{CvM2} W. W. Comfort and J. van Mill, \emph{Extremal pseudocompact abelian groups are compact metrizable},  Proc. Amer. Math. Soc.  \textbf{135}  (2007)  no. 12, 4039--4044.

\bibitem{CvM3} W.W. Comfort and J. van Mill, \emph{On the supremum of the pseudocompact group topologies}, Topology and its Appl. \textbf{155} (2008), 213--224.

\bibitem{dG} J. de Groot, \emph{Subcompactness and the Baire category theorem}, Nederl. Akad. Wetensch. Proc. Ser. A 66 Indag. Math. \textbf{25} (1963), 761--767.

\bibitem{DG} D. Dikranjan and A. Giordano Bruno, \emph{Pseudocompact totally dense subgroups}, Proc. Amer. Math. Soc. \textbf{136} (2008) no. 3, 1093--1103.

\bibitem{DG1} D. Dikranjan and A. Giordano Bruno, \emph{w-Divisible groups}, Topology and its Appl. \textbf{155} (2008) 252--272.

\bibitem{DGM} D. Dikranjan, A. Giordano Bruno and C. Milan, \emph{Weakly metrizable pseudocompact groups}, Appl. General Topology \textbf{7} no. 1 (2006), 1--39.

\bibitem{DPS} D. Dikranjan, Iv. Prodanov and L. Stoyanov, \emph{Topological Groups: Characters,  Dualities  and  Minimal Group Topologies},  Pure and Applied Mathematics,  vol. \textbf{130}, Marcel Dekker  Inc., New York-Basel (1989).

\bibitem{DS} D. Dikranjan and D. Shakhmatov, \emph{Algebraic structure of pseudocompact groups}, Mem. Amer. Math. Soc. \textbf{133} (1998) no. 663, 83 pp.

\bibitem{E} R. Engelking, \emph{General Topology}, Heldermann Verlag, Berlin (1989).

\bibitem{Fuchs} L. Fuchs, \emph{Infinite  abelian groups},  vol. I, Academic Press  New York and London (1973).


\bibitem{Gal2} J. Galindo, \emph{Dense pseudocompact subgroups and finer pseudocompact group topologies}, Sci. Math. Jpn. \textbf{55} (2001) no. 3, 627--640.

\bibitem{GST} S. Garc\'\i a-Ferreira, M. Sanchis, A. Tamariz-Mascar\'ua, \emph{On $C_\alpha$-compact subsets}, Topology and its Appl. \textbf{77} (1997), 139--160.

\bibitem{AGB1} A. Giordano Bruno, \emph{Dense minimal pseudocompact subgroups of compact abelian groups}, to appear in Topology and its Appl.

\bibitem{H} E. Hewitt, \emph{Rings of real-valued continuous functions {I}}, Trans. Amer. Math. Soc. \textbf{64} (1948), 45--99.

\bibitem{HR}  E. Hewitt and  K. Ross, \emph{Abstract harmonic analysis I}, Springer-Verlag, Berlin-Heidelberg-New York (1963).

\bibitem{HR2}  E. Hewitt and  K. Ross, \emph{Abstract harmonic analysis II}, Springer-Verlag, Berlin-Heidelberg-New York (1970).

\bibitem{K} J. F. Kennison, \emph{m-pseudocompactness}, Trans. Amer. Math. Soc. \textbf{104} (1962), 436--442.

\bibitem{W} A. Weil, \emph{Sur les espaces \`a structure uniforme et sur la topologie g\'en\'erale}, Publ. Math. Univ. Strasbourg, Hermann \& Cie., Paris (1937).

\end{thebibliography}
\end{document}